\numberwithin{equation}{section}
\newtheorem{thm}[equation]{Theorem}
\newtheorem*{thm*}{Theorem}
\newtheorem*{prop*}{Proposition}
\newtheorem{prop}[equation]{Proposition}
\newtheorem{lem}[equation]{Lemma}
\newtheorem{cor}[equation]{Corollary}
\theoremstyle{remark}
\newtheorem{defn}[equation]{Definition}
\newtheorem*{defn*}{Definition}
\newtheorem*{setup*}{Setup}
\newtheorem{hyp}[equation]{Hypothesis}
\newtheorem*{hyp*}{Hypothesis}
\newtheorem{nota}[equation]{Notation}
\newtheorem*{nota*}{Notation}
\newtheorem{ex}[equation]{Example}
\newtheorem{rem}[equation]{Remark}
\newtheorem*{Ack}{Acknowledgements}
\newcommand{\nc}{\newcommand}
\nc{\dmo}{\DeclareMathOperator}
\nc{\sfA}{\mathsf{A}}
\nc{\sfB}{\mathsf{B}}
\nc{\sfC}{\mathsf{C}}
\nc{\sfD}{\mathsf{D}}
\nc{\sfE}{\mathsf{E}}
\nc{\sfF}{\mathsf{F}}
\nc{\sfG}{\mathsf{G}}
\nc{\sfH}{\mathsf{H}}
\nc{\sfI}{\mathsf{I}}
\nc{\sfJ}{\mathsf{J}}
\nc{\sfK}{\mathsf{K}}
\nc{\sfL}{\mathsf{L}}
\nc{\sfM}{\mathsf{M}}
\nc{\sfN}{\mathsf{N}}
\nc{\sfO}{\mathsf{O}}
\nc{\sfP}{\mathsf{P}}
\nc{\sfQ}{\mathsf{Q}}
\nc{\sfR}{\mathsf{R}}
\nc{\sfS}{\mathsf{S}}
\nc{\sfT}{\mathsf{T}}
\nc{\sfU}{\mathsf{U}}
\nc{\sfV}{\mathsf{V}}
\nc{\sfW}{\mathsf{W}}
\nc{\sfX}{\mathsf{X}}
\nc{\sfY}{\mathsf{Y}}
\nc{\sfZ}{\mathsf{Z}}
\nc{\scA}{\mathscr{A}}
\nc{\scB}{\mathscr{B}}
\nc{\scC}{\mathscr{C}}
\nc{\scD}{\mathscr{D}}
\nc{\scE}{\mathscr{E}}
\nc{\scF}{\mathscr{F}}
\nc{\scG}{\mathscr{G}}
\nc{\scH}{\mathscr{H}}
\nc{\scI}{\mathscr{I}}
\nc{\scJ}{\mathscr{J}}
\nc{\scK}{\mathscr{K}}
\nc{\scL}{\mathscr{L}}
\nc{\scM}{\mathscr{M}}
\nc{\scN}{\mathscr{N}}
\nc{\scO}{\mathscr{O}}
\nc{\scP}{\mathscr{P}}
\nc{\scQ}{\mathscr{Q}}
\nc{\scR}{\mathscr{R}}
\nc{\scS}{\mathscr{S}}
\nc{\scT}{\mathscr{T}}
\nc{\scU}{\mathscr{U}}
\nc{\scV}{\mathscr{V}}
\nc{\scW}{\mathscr{W}}
\nc{\scX}{\mathscr{X}}
\nc{\scY}{\mathscr{Y}}
\nc{\scZ}{\mathscr{Z}}
\nc{\mcA}{\mathcal{A}}
\nc{\mcB}{\mathcal{B}}
\nc{\mcC}{\mathcal{C}}
\nc{\mcD}{\mathcal{D}}
\nc{\mcE}{\mathcal{E}}
\nc{\mcF}{\mathcal{F}}
\nc{\mcG}{\mathcal{G}}
\nc{\mcH}{\mathcal{H}}
\nc{\mcI}{\mathcal{I}}
\nc{\mcJ}{\mathcal{J}}
\nc{\mcK}{\mathcal{K}}
\nc{\mcL}{\mathcal{L}}
\nc{\mcM}{\mathcal{M}}
\nc{\mcN}{\mathcal{N}}
\nc{\mcO}{\mathcal{O}}
\nc{\mcP}{\mathcal{P}}
\nc{\mcQ}{\mathcal{Q}}
\nc{\mcR}{\mathcal{R}}
\nc{\mcS}{\mathcal{S}}
\nc{\mcT}{\mathcal{T}}
\nc{\mcU}{\mathcal{U}}
\nc{\mcV}{\mathcal{V}}
\nc{\mcW}{\mathcal{W}}
\nc{\mcX}{\mathcal{X}}
\nc{\mcY}{\mathcal{Y}}
\nc{\mcZ}{\mathcal{Z}}
\nc{\mfp}{\mathfrak{p}}
\nc{\mfq}{\mathfrak{q}}
\nc{\mfm}{\mathfrak{m}}
\nc{\mfj}{\mathfrak{j}}
\nc{\mfs}{\mathfrak{s}}
\nc{\mfh}{\mathfrak{h}}
\nc{\rmh}{\mathrm{h}}
\nc{\rmfp}{\mathrm{fp}}
\nc{\rmc}{\mathrm{c}}
\nc{\rms}{\mathrm{s}}
\nc{\rma}{\mathrm{a}}
\nc{\rmb}{\mathrm{b}}
\nc{\rml}{\mathrm{l}}
\nc{\rmL}{\mathrm{L}}
\nc{\rmC}{\mathrm{C}}
\nc{\rmD}{\mathrm{D}}
\nc{\rmK}{\mathrm{K}}
\nc{\rmS}{\mathrm{S}}
\nc{\rmop}{\mathrm{op}}
\nc{\Ab}{\mathrm{Ab}}
\nc{\bfA}{\mathbf{A}}
\nc{\bfB}{\mathbf{B}}
\nc{\bfC}{\mathbf{C}}
\nc{\bfD}{\mathbf{D}}
\nc{\bfE}{\mathbf{E}}
\nc{\bfF}{\mathbf{F}}
\nc{\bfG}{\mathbf{G}}
\nc{\bfH}{\mathbf{H}}
\nc{\bfI}{\mathbf{I}}
\nc{\bfJ}{\mathbf{J}}
\nc{\bfK}{\mathbf{K}}
\nc{\bfL}{\mathbf{L}}
\nc{\bfM}{\mathbf{M}}
\nc{\bfN}{\mathbf{N}}
\nc{\bfO}{\mathbf{O}}
\nc{\bfP}{\mathbf{P}}
\nc{\bfQ}{\mathbf{Q}}
\nc{\bfR}{\mathbf{R}}
\nc{\bfS}{\mathbf{S}}
\nc{\bfT}{\mathbf{T}}
\nc{\bfU}{\mathbf{U}}
\nc{\bfV}{\mathbf{V}}
\nc{\bfW}{\mathbf{W}}
\nc{\bfX}{\mathbf{X}}
\nc{\bfY}{\mathbf{Y}}
\nc{\bfZ}{\mathbf{Z}}
\nc{\bbZ}{\mathbb{Z}}
\nc{\bbQ}{\mathbb{Q}}
\nc{\bbZp}{\mathbb{Z}_{(p)}}
\nc{\pruferp}{\mathbb{Z}(p^\infty)}
\nc{\pruferq}{\mathbb{Z}(q^\infty)}
\nc{\gG}{\Gamma}
\nc{\gL}{\Lambda}
\nc{\gD}{\Delta}
\nc{\gS}{\Sigma}
\nc{\ga}{\alpha}
\nc{\gb}{\beta}
\nc{\g}{\gamma}
\nc{\gd}{\delta}
\nc{\e}{\epsilon}
\nc{\gz}{\zeta}
\nc{\gh}{\eta}
\nc{\gu}{\theta}
\nc{\gi}{\iota}
\nc{\gk}{\kappa}
\nc{\gl}{\lambda}
\nc{\gm}{\mu}
\nc{\gn}{\nu}
\nc{\gj}{\xi}
\nc{\gp}{\pi}
\nc{\gr}{\rho}
\nc{\gs}{\sigma}
\nc{\gt}{\tau}
\nc{\gf}{\phi}
\nc{\gx}{\chi}
\nc{\gc}{\psi}
\nc{\go}{\omega}
\nc{\wh}{\widehat}
\nc{\wt}{\widetilde}
\nc{\ol}{\overline}
\nc{\ul}{\underline}
\nc{\tl}{\tilde}
\nc{\ot}{\otimes}
\nc{\xr}{\xrightarrow}
\nc{\ie}{\sl i.e.,}
\dmo{\Coloch}{Coloc^{hom}}
\dmo{\coloch}{coloc^{hom}}
\dmo{\Coloca}{Coloc^\ast}
\dmo{\coloca}{coloc^\ast}
\dmo{\Coloc}{Coloc}
\dmo{\coloc}{coloc}
\dmo{\Loct}{Loc^\ot}
\dmo{\loct}{loc^\ot}
\dmo{\Loca}{Loc^\ast}
\dmo{\loca}{loc^\ast}
\dmo{\Loc}{Loc}
\dmo{\loc}{loc}
\dmo{\Sm}{S^\ot}
\dmo{\Sma}{S^\ast}
\dmo{\Thick}{Thick^\ot}
\dmo{\thick}{thick^\ot}
\dmo{\Serre}{Serre^\ot}
\dmo{\Thom}{Thom}
\nc{\Modc}{\Mod(\scT^\rmc)}
\nc{\smodc}{\mathrm{mod}(\scT^\rmc)}
\nc{\tcop}{{\{\scT^\rmc\}}^{\mathrm{op}}}
\dmo{\cone}{cone}
\dmo{\Spec}{Spec}
\nc{\Spc}{\mathrm{Spc}(\scT^\rmc)}
\dmo{\Spcsm}{Spc^s}
\nc{\Spcs}{\Spcsm(\scT)}
\nc{\Spcsa}[1]{\mathrm{Spc}^\rms(#1)}
\nc{\Spcsl}[1]{\mathrm{Spc}^\rms(\scT/#1)}
\dmo{\Spch}{Spc^h(\scT^c)}
\dmo{\SPC}{SPC(\scT)}
\dmo{\Supps}{Supp^s}
\dmo{\Supph}{Supp^h}
\dmo{\SUPP}{SUPP}
\dmo{\Supp}{Supp}
\dmo{\Cosupp}{Cosupp}
\dmo{\Cosupph}{Cosupph}
\dmo{\cosupp}{cosupp}
\dmo{\cosupphom}{cosupph}
\dmo{\supps}{supp^s}
\dmo{\supph}{supp^h}
\dmo{\supp}{supp}
\dmo{\supphom}{supph}
\dmo{\cosupps}{cosupp^s}
\dmo{\Cosupps}{Cosupp^s}
\dmo{\sg}{s_\Gamma}
\dmo{\sga}{s_\Gamma^\ast}
\dmo{\tausg}{\tau_{s_\Gamma}}
\dmo{\tausga}{\tau_{s_\Gamma^\ast}}
\dmo{\sigmasg}{\sigma_{s_\Gamma}}
\dmo{\sigmasga}{\sigma_{s_\Gamma^\ast}}
\dmo{\cg}{c_\Gamma}
\dmo{\cga}{c_\Gamma^\ast}
\dmo{\taucg}{\tau_{c_\Gamma}}
\dmo{\taucga}{\tau_{c_\Gamma^\ast}}
\dmo{\sigmacg}{\sigma_{c_\Gamma}}
\dmo{\sigmacga}{\sigma_{c_\Gamma^\ast}}
\dmo{\Mod}{Mod}
\dmo{\smod}{mod}
\dmo{\Modu}{\ul{\Mod}}
\dmo{\smodu}{\ul{\smod}}
\dmo{\Proj}{Proj}
\dmo{\proj}{proj}
\dmo{\Flat}{Flat}
\dmo{\Inj}{Inj}
\dmo{\PInj}{PInj}
\dmo{\Cohs}{Coh}
\dmo{\QCoh}{QCoh}
\dmo{\RHom}{RHom}
\dmo{\Hom}{Hom}
\dmo{\shom}{hom}
\dmo{\Ext}{Ext}
\dmo{\Tor}{Tor}
\dmo{\End}{End}
\dmo{\Aut}{Aut}
\dmo{\Ob}{Ob}
\dmo{\Mor}{Mor}
\dmo{\Ph}{Ph}
\dmo{\Ann}{Ann}
\dmo{\Ker}{Ker}
\dmo{\coker}{Coker}
\dmo{\im}{Im}
\dmo{\colim}{colim}
\dmo{\hocolim}{Hocolim}
\dmo{\Id}{Id}
\nc{\Char}[1]{{\color{ForestGreen}#1}}
\tikzset{rot270/.style={anchor=south, rotate=270, inner sep=1.0mm}}
\nc{\rcolon}{\nobreak \mskip 6muplus1mu\mathpunct {}\nonscript \mkern -\thinmuskip {:}\mskip 2mu\relax} 
\nc{\set}[2]{\big\{\, #1 \ \big| \ #2 \,\big\}} 
\nc{\paren}[2]{\! \big(\, #1 \ \big| \ #2 \big)} 
\nc{\parens}[2]{\! \big(\, #1 \ \big| \ #2 \,\big)} 
\nc{\qquadtext}[1]{\qquad\textrm{#1}\qquad} 
\patchcmd{\@setaddresses}{\indent}{\noindent}{}{}
\patchcmd{\@setaddresses}{\indent}{\noindent}{}{}
\patchcmd{\@setaddresses}{\indent}{\noindent}{}{}
\patchcmd{\@setaddresses}{\indent}{\noindent}{}{}
\begin{document}
\title{Costratification and actions of tensor-triangulated categories}
\author{Charalampos Verasdanis}
\address{Charalampos Verasdanis \\ School of Mathematics and Statistics \\ University of Glasgow}
\email{c.verasdanis.1@research.gla.ac.uk}
\date{}
\subjclass{18F99, 18G80}
\keywords{Costratification, cosupport, relative tensor-triangular geometry}

\begin{abstract}
We develop the theory of costratification in the setting of relative tensor-triangular geometry, in the sense of Stevenson, providing a unified approach to classification results of Neeman and Benson--Iyengar--Krause, while laying the foundations for future applications. In addition, we introduce and study prime localizing submodules and prime colocalizing $\shom$-submodules, in the first case, generalizing objectwise-prime localizing tensor-ideals. We apply our results to show that the derived category of quasi-coherent sheaves over a noetherian separated scheme is costratified.
\end{abstract}

\maketitle

\vspace{-0.41em}

\vskip-\baselineskip\vskip-\baselineskip
\tableofcontents

\vskip-\baselineskip\vskip-\baselineskip
\vskip-\baselineskip\vskip-\baselineskip

\section{Introduction}

The theory of cosupport and costratification in tensor-triangulated categories was initiated by Bensor--Iyengar--Krause~\cite{BensonIyengarKrause12}, inspired by the classification of colocalizing subcategories of the derived category of a commutative noetherian ring by Neeman~\cite{Neeman11}. Their main application was the classification of $\Hom$-closed colocalizing subcategories of the stable module category of a finite group. Compared to the theory of support and stratification~\cite{BensonIyengarKrause08,BensonIyengarKrause11a,BensonIyengarKrause11b,BarthelHeardSanders23} (which is related to the classification of localizing subcategories) the theory of costratification has not been explored as much. In particular, a theorem that unifies the aforementioned classifications has not yet been stated and the machinery of~\cite{BensonIyengarKrause12} depends on the action of a commutative noetherian ring on the triangulated category involved. The aim of this paper is to provide such a theorem and develop the theory of costratification in the context of relative tensor-triangular geometry~\cite{Stevenson13}. The advantage to this approach is that it allows our theory to apply in cases of triangulated categories that are not necessarily tensor-triangulated but are endowed with an action of a tensor-triangulated category. For instance, singularity categories of commutative noetherian rings~\cite{Krause05,Stevenson14b}. This will be the subject of future work. It should be noted that progress on the topic of costratification has also been made independently by Barthel--Castellana--Heard--Sanders, focusing on the Balmer--Favi support~\cite{BarthelCastellanaHeardSanders23}.

Let $\scT$ be a rigidly-compactly generated tensor-triangulated category and let $\scK$ be a compactly generated triangulated category endowed with an action of $\scT$; see~\Cref{sec:prelim} for details. In the special case where $\scK=\scT$, the action is given by the tensor product of $\scT$.

\Cref{sec:prelim} consists of preliminary material --- including a very brief account of the Balmer--Favi support~\cite{BalmerFavi11} and the smashing spectrum and the small smashing support~\cite{BalchinStevenson23} --- and establishes concepts and basic lemmas that we will be using throughout the paper.

In~\Cref{sec:costrat}, we introduce the notion of a good support--cosupport pair on $\scT$ taking values in a space $S$, which induces a support--cosupport pair on $\scK$. For example, one could take the small smashing support--cosupport or the Balmer--Favi support--cosupport or the BIK support--cosupport. In our first main result,~\Cref{thm:costrat-coltg-comin}, we prove that $\scK$ is costratified (meaning that there is a bijective correspondence between certain subsets of $S$ and the collection of colocalizing $\shom$-submodules of~$\scK$) if and only if $\scK$ satisfies two conditions: the colocal-to-global principle and cominimality. These two conditions are in a sense dual to the more well-established local-to-global principle and minimality, which were introduced in~\cite{BensonIyengarKrause11a} and studied further in~\cite{Stevenson13,Stevenson17,Stevenson18a,BarthelHeardSanders23}. Moreover, in~\Cref{cor:ltg-implies-coltg}, we prove that the local-to-global principle implies the colocal-to-global principle.

\Cref{sec:primes} is devoted to the study of prime localizing submodules and prime colocalizing $\shom$-submodules of $\scK$, the former specializing to the class of objectwise-prime localizing tensor-ideals when $\scK=\scT$; see~\cite{BalchinStevenson23,Verasdanis23}. We prove that if $\scK$ is costratified, then there is a bijective correspondence between points of a certain subspace of $S$ and prime colocalizing $\shom$-submodules of $\scK$, obtaining a complete description of the latter (\Cref{thm:costrat-hom-primes}). The correspondence is given by~\Cref{thm:costrat-coltg-comin}. In addition, we analyze the relation of prime localizing submodules and prime colocalizing $\shom$-submodules with the Action Formula (which generalizes the Tensor Product Formula) and the Internal-Hom Formula. 

In~\Cref{sec:smash}, we show that costratification of $\scK$ can be reduced to costratification of certain smashing localizations of $\scK$; see~\Cref{thm:comin-local}. For instance, if $\scK=\scT$ and the action of $\scT$ on itself is the tensor product of $\scT$ and the support--cosupport theory one fixes is given by the smashing spectrum $\Spcs$ and the small smashing support (assuming that $\Spcs$ is $T_D$) we have the following: Provided that $\scT$ satisfies the colocal-to-global principle, $\scT$ is costratified if and only if $\scT/P$ is costratified, for all $P\in \Spcs$; see~\Cref{cor:comin-local-smash}. As a direct consequence of~\Cref{cor:comin-local-smash}, we have: If $\Spcs=\bigcup_{j\in J}V_{\scS_j}$ is a cover of $\Spcs$ by closed subsets such that each $\scT/\scS_j$ satisfies cominimality, then (provided that $\scT$ satisfies the colocal-to-global principle) $\scT$ is costratified; see~\Cref{cor:comin-local-smash-covers}. In~\Cref{cor:comin-local-bf} and~\Cref{cor:comin-covers-compacts}, we state the analogous results for the Balmer spectrum $\Spc$ and the Balmer--Favi support (assuming that every point of $\Spc$ is visible). In~\Cref{thm:costrat-actions-covers}, we prove a generalization of~\Cref{cor:comin-covers-compacts} by replacing $\scT$ with a $\scT$-module $\scK$.

Finally, in~\Cref{sec:derived}, we present our application: Using the general machinery developed throughout, we first give in~\Cref{thm:affine-costrat} a more streamlined proof of Neeman's classification of the colocalizing subcategories of the derived category of a commutative noetherian ring and then we combine~\Cref{thm:affine-costrat} with~\Cref{cor:comin-covers-compacts} to show that the derived category of quasi-coherent sheaves over a noetherian separated scheme is costratified.
\begin{Ack}
I am grateful to Greg Stevenson for enlightening conversations and to Paul Balmer for his interest and suggestions that led to improvements.
\end{Ack}

\section{Preliminaries}\label{sec:prelim}
\subsection{Actions and basic lemmas}

Throughout, $\scT=(\scT,\ot,1)$ will be a rigidly-compactly generated tensor-triangulated category (big tt-category) as in~\cite{BalmerFavi11}, i.e., $\scT$ is a tensor-triangulated category with arbitrary coproducts and it is generated by its compact objects as a localizing subcategory. Moreover, the (essentially small) subcategory $\scT^\rmc$ of compact objects of $\scT$ is a tensor-triangulated subcategory and the rigid objects of $\scT$ coincide with the compact objects. The internal-hom functor of $\scT$ will be denoted by $[-,-]\colon \scT^\rmop \times \scT \to \scT$.

Let $\scK$ be a compactly generated triangulated category and let $\ast\colon\scT\times \scK\to \scK$ be an action of $\scT$ on $\scK$, in the sense of~\cite{Stevenson13}. In short, $\ast\colon \scT\times \scK\to \scK$ is a coproduct-preserving triangulated functor in each variable such that there exist natural (in all variables) isomorphisms $\ga_{X,Y,A}\colon X\ast(Y\ast A) \xr{\cong} (X\ot Y)\ast A$ and $l_A\colon 1\ast A \xr{\cong} A,\, \forall X,Y\in \scT,\, \forall A\in \scK$. The natural isomorphism $\ga$ is called the \emph{associator} and the natural isomorphism $l$ is called the \emph{unitor}. There is also a host of coherence conditions that need to be satisfied; we refer the reader to the aforementioned source for details. We call $\scK=(\scK,\ast)$ a $\scT$-\emph{module}.

By definition, for every object $X\in \scT$, the functor $X\ast -\colon \scK\to \scK$ is a coproduct-preserving triangulated functor. Hence, by Brown representability, $X\ast -$ admits a right adjoint $[X,-]_\ast \colon \scK \to \scK$. Assembling these right adjoints yields a functor $[-,-]_\ast\colon \scT^\rmop\times \scK \to \scK$ that we call the \emph{relative internal-hom}. Since $[1,-]_\ast$ is the right adjoint of $1\ast -\cong \Id_\scK$, it holds that $[1,-]_\ast\cong \Id_\scK$. Specifically, the composite $m\coloneqq \Id_\scK\to [1,1\ast -]_\ast \xr{[1,l]_\ast} [1,-]_\ast$, where the first map is the unit of adjunction, is a natural isomorphism (which we call the \emph{hom-unitor}).

\begin{lem}\label{lem:hom-associativity}
Let $X,Y\in \scT$ and $A\in \scK$. Then there exists a natural (in all variables) isomorphism $\gb_{X,Y,A}\colon [X\ot Y,A]_\ast \xr{\cong} [X,[Y,A]_\ast]_\ast$ called the hom-associator.
\end{lem}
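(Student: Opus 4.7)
The plan is to construct $\gb$ by passage to right adjoints. Both target functors are right adjoints: $[X\ot Y,-]_\ast$ is by definition right adjoint to $(X\ot Y)\ast -\colon \scK\to \scK$, while $[X,[Y,-]_\ast]_\ast$ is the composite of the right adjoints $[Y,-]_\ast$ and $[X,-]_\ast$, hence is right adjoint to the composite $X\ast(Y\ast -)\colon \scK \to \scK$. The associator provides a natural isomorphism $\ga_{X,Y,-}\colon X\ast(Y\ast -)\xr{\cong} (X\ot Y)\ast -$ of endofunctors of $\scK$, so uniqueness of right adjoints up to canonical natural isomorphism yields, for each pair $(X,Y)$, a natural isomorphism $\gb_{X,Y,-}\colon [X\ot Y,-]_\ast \xr{\cong} [X,[Y,-]_\ast]_\ast$, naturally in the $A$-variable. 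Concretely, $\gb_{X,Y,A}$ is the mate of $\ga_{X,Y,-}^{-1}$, i.e.\ the composite
\[
[X\ot Y,A]_\ast \xr{\eta} [X,X\ast[X\ot Y,A]_\ast]_\ast \to [X,[Y,Y\ast(X\ast[X\ot Y,A]_\ast)]_\ast]_\ast \xr{\ga^{-1}} [X,[Y,(X\ot Y)\ast[X\ot Y,A]_\ast]_\ast]_\ast \xr{\e} [X,[Y,A]_\ast]_\ast,
\]
where $\eta$ and $\e$ are the respective units and counits. Equivalently, one can characterize $\gb$ as the unique natural isomorphism for which the induced map on representable functors agrees, under the adjunction isomorphisms, with post-composition by $\ga_{X,Y,-}$.

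Next I would verify naturality in the two remaining variables $X$ and $Y$. The associator $\ga_{X,Y,A}$ is natural in $X$ and $Y$ (and in $A$), and the formation of mates is functorial: given morphisms $f\colon X'\to X$ and $g\colon Y'\to Y$ in $\scT$, naturality of $\ga$ translates, via the adjunctions $(X\ast -)\dashv [X,-]_\ast$, $(Y\ast -)\dashv [Y,-]_\ast$ and $((X\ot Y)\ast -)\dashv [X\ot Y,-]_\ast$, into commutativity of the square
\[
\begin{tikzcd}
{[X\ot Y,A]_\ast} \ar[r,"\gb_{X,Y,A}"] \ar[d,"{[f\ot g,A]_\ast}"'] & {[X,[Y,A]_\ast]_\ast} \ar[d,"{[f,[g,A]_\ast]_\ast}"] \\
{[X'\ot Y',A]_\ast} \ar[r,"\gb_{X',Y',A}"'] & {[X',[Y',A]_\ast]_\ast.}
\end{tikzcd}
\]
This is a formal diagram chase using the triangle identities and the naturality of units and counits, together with the naturality of $\ga$ in $X$ and $Y$.

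The main step that actually requires content is the existence statement in the first paragraph — everything else is a routine mate/adjunction calculation. The potential obstacle lies in making sure the implicit coherence of the action enters correctly: the associator is part of the action data, but one must invoke only its naturality and no higher coherence (such as a pentagon), since here we are merely transporting a natural isomorphism of functors to their right adjoints. Thus no coherence pentagon is needed to define $\gb$; coherence axioms would only be invoked if one wanted to establish a pentagon for $\gb$ itself, which is not asked for in this lemma.
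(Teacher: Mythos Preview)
Your approach is essentially the paper's: both derive $\gb$ from the adjunction isomorphisms together with the associator, the paper via a Yoneda-style chain $\Hom_\scK(B,[X\ot Y,A]_\ast)\cong\cdots\cong\Hom_\scK(B,[X,[Y,A]_\ast]_\ast)$ and you via the equivalent mate construction.

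There is, however, a small slip in your identification of left adjoints. The composite $[X,-]_\ast\circ[Y,-]_\ast=[X,[Y,-]_\ast]_\ast$ is right adjoint to $(Y\ast-)\circ(X\ast-)$, i.e.\ to $A\mapsto Y\ast(X\ast A)$, not to $X\ast(Y\ast-)$ as you wrote; this is visible from $\Hom_\scK(A,[X,[Y,B]_\ast]_\ast)\cong\Hom_\scK(X\ast A,[Y,B]_\ast)\cong\Hom_\scK(Y\ast(X\ast A),B)$. Consequently the associator alone does not match the two left adjoints: $\ga_{Y,X,-}$ lands in $(Y\ot X)\ast-$, and one must also invoke the symmetry $\gs_{Y,X}\colon Y\ot X\xr{\cong} X\ot Y$ to reach $(X\ot Y)\ast-$. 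The paper's chain makes this explicit (the step $(X\ot Y)\ast B\cong(Y\ot X)\ast B$). Your concrete mate formula has the same issue at the arrow you label $\ga^{-1}$. Once you insert the symmetry, the argument is correct and coincides with the paper's; naturality in $X$ and $Y$ then follows exactly as you indicate, since both $\ga$ and $\gs$ are natural in all variables.
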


\begin{proof}
Let $B\in\scK$. By the adjunction between the action and the relative internal-hom and the relation $(X\ot Y) \ast B \cong (Y\ot X)\ast B \cong Y\ast(X\ast B)$, we have:
\begin{align*}
\Hom_\scK(B,[X\ot Y,A]_\ast) &\cong \Hom_\scK((X\ot Y)\ast B,A)\\
&\cong \Hom_\scK((Y\ot X)\ast B,A)\\
&\cong \Hom_\scK(Y\ast(X\ast B),A)\\
& \cong \Hom_\scK(X\ast B,[Y,A]_\ast)\\
& \cong \Hom_\scK(B,[X,[Y,A]_\ast]_\ast).
\end{align*}
Consequently, for $B=[X\ot Y,A]_\ast$, the image of the identity morphism on $[X\ot Y,A]_\ast$ under the above series of isomorphisms gives a natural (in all variables) isomorphism $\gb_{X,Y,A}\colon [X\ot Y,A]_\ast \xr{\cong} [X,[Y,A]_\ast]_\ast$.
\end{proof}

\begin{nota}
Let $\scK_1$ and $\scK_2$ be two $\scT$-modules. For $i=1,2$, the relative internal-hom of $\scK_i$ will be denoted by $[-,-]_i$. Let $X,Y\in \scT$ and $A\in \scK_i$. The associator and unitor natural isomorphisms will be denoted by $\ga^i_{X,Y,A}$ and $l^i_A$, respectively. The hom-associator and hom-unitor natural isomorphisms will be denoted by $\gb^i_{X,Y,A}$ and $m^i_A$, respectively. The unit and the counit of the action-hom adjunction will be denoted by $u^i_{X,A}\colon A \to [X,X\ast_i A]_i$ and $c^i_{X,A}\colon X\ast_i [X,A]_i \to A$, respectively. We denote by $\gs_{X,Y}\colon X\ot Y \xr{\cong} Y\ot X$ the symmetry natural isomorphism. We denote by $c_{X,-}\colon X\ot [X,-] \to \Id_\scT$ the counit of the adjunction $X\ot - \dashv [X,-]$. Set $X^\vee \coloneqq [X,1]$ and define the morphism $\mathrm{ev}^i_{X,A} \colon X\ast_i A \to [X^\vee,A]_i$ as the following composite:
\begin{flalign*}
&X \ast_i A \xr{u^i_{X^\vee,X \ast_i A}} [X^\vee,X^\vee\ast_i (X \ast_i A)]_i \xr[\cong]{[X^\vee,\ga^i_{X^\vee,X,A}]_i} [X^\vee,(X^\vee \ot X)\ast_i A]_i&
\end{flalign*}
\[
\xr[\cong]{[X^\vee,\gs_{X^\vee,X}\ast_i A]_i} [X^\vee,(X\ot X^\vee)\ast_i A]_i \xr{[X^\vee,c_{X,1}\ast_i A]_i} [X^\vee,1\ast_i A]_i \xr[\cong]{[X^\vee,l^i_A]_i} [X^\vee,A]_i.
\]
\end{nota}

\begin{defn}\label{defn:action-pres}
A functor $F\colon \scK_1 \to \scK_2$, between $\scT$-modules $\scK_1$ and $\scK_2$, is called \emph{action-preserving} if there is a natural isomorphism $\phi\colon F(-\ast_1 -) \to - \ast_2 F(-)$ between the functors $F(-\ast_1 -),\, - \ast_2 F(-)\colon \scT\times \scK_1 \to \scK_2$ such that, for all $X,Y\in \scT$ and for all $A\in \scK_1$, the following diagrams commute:
\[
\begin{tikzcd}[column sep=4em,row sep=3.5em]
F(X\ast_1(Y\ast_1 A)) \rar["\phi_{X{,}Y\ast_1 A}"] \dar["F\ga^1_{X,Y,A}"'] & X\ast_2 F(Y\ast_1 A) \rar["X\ast_2 \phi_{Y,A}"] & X \ast_2 (Y\ast_2 FA) \dar["\ga^2_{X,Y,FA}"]
\\
F((X\ot Y)\ast_1 A) \arrow[rr,"\phi_{X\ot Y,A}"] && (X\ot Y) \ast_2 FA,
\end{tikzcd}
\]
\[
\begin{tikzcd}[row sep=3em]
F(1\ast_1 A) \rar["\phi_{1,A}"] \dar["Fl^1_A"']& 1\ast_2 FA \dlar["l^2_{FA}"]
\\
FA.
\end{tikzcd}
\]
\end{defn}

\begin{defn}\label{defn:hom-pres}
A functor $G\colon \scK_2 \to \scK_1$, between $\scT$-modules $\scK_2$ and $\scK_1$, is called \emph{$\shom$-preserving} if there is a natural isomorphism $\psi\colon [-,G(-)]_1 \to G[-,-]_2$ between the functors $[-,G(-)]_1,\, G[-,-]_2\colon \scT^\rmop\times \scK_2 \to \scK_1$ such that, for all $X,Y\in \scT$ and for all $B\in \scK_2$, the following diagrams commute:
\[
\begin{tikzcd}[column sep=4.3em,row sep=3.5em]
{[}X,{[}Y,GB{]}_1{]}_1 \rar["{[}X{,}\psi_{Y,B}{]}_1"] \dar["\gb^1_{X,Y,GB}"'] & {[}X,G{[}Y,B{]}_2{]}_1 \rar["\psi_{X,{[}Y{,}B{]}_2}"] & G{[}X,{[}Y,B{]}_2{]}_2 \dar["G\gb^2_{X,Y,B}"]
\\
{[}X\ot Y,GB{]}_1 \arrow[rr,"\psi_{X\ot Y{,}B}"] && G{[}X\ot Y,B{]}_2,
\end{tikzcd}
\]
\[
\begin{tikzcd}[row sep=3em]
GB \drar["Gm^2_B"] \dar["m^1_{GB}"']
\\
{[}1,GB{]}_1 \rar["\psi_{1,B}"'] & G{[}1,B{]}_2.
\end{tikzcd}
\]
\end{defn}

\begin{lem}\label{lem:restrictions-to-compacts}
Let $\scT$ and $\scK$ be triangulated categories with $\scT$ compactly generated and let $F_1,F_2\colon \scT \to \scK$ be coproduct-preserving triangulated functors (or contravariant triangulated functors that send coproducts to products). If there is a natural transformation $\theta\colon F_1 \to F_2$ such that $\theta_x$ is an isomorphism, for all $x\in \scT^\rmc$, then $\theta$ is a natural isomorphism.
\end{lem}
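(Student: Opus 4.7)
The plan is to consider the full subcategory $\scS \subseteq \scT$ consisting of those objects $Y$ for which $\theta_Y \colon F_1(Y) \to F_2(Y)$ is an isomorphism, and show that $\scS$ is a localizing subcategory of $\scT$ containing $\scT^\rmc$. Once this is established, compact generation of $\scT$ (i.e., $\scT = \Loc(\scT^\rmc)$) will force $\scS = \scT$, which is exactly the claim.

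First I would check that $\scS$ is a triangulated subcategory of $\scT$. It contains $0$ trivially, and it is closed under shifts since $\theta$ commutes with $\Sigma$ (both $F_1$ and $F_2$ being triangulated). For closure under cones, given a distinguished triangle $X \to Y \to Z \to \Sigma X$ with $\theta_X$ and $\theta_Y$ isomorphisms, the naturality of $\theta$ and the triangulated-functor axiom produce a morphism of distinguished triangles in $\scK$, and the standard $2$-out-of-$3$ argument (from the $5$-lemma in triangulated categories) implies $\theta_Z$ is also an isomorphism. In the contravariant case the argument is entirely analogous, with distinguished triangles being sent to distinguished triangles up to reindexing.

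Next I would verify closure of $\scS$ under arbitrary coproducts. In the covariant case, if $\{Y_i\}_{i\in I}$ is a family of objects with each $\theta_{Y_i}$ invertible, then the commutative square
\[
\begin{tikzcd}
\coprod_i F_1(Y_i) \rar["\cong"] \dar["\coprod_i \theta_{Y_i}"'] & F_1\bigl(\coprod_i Y_i\bigr) \dar["\theta_{\coprod_i Y_i}"] \\
\coprod_i F_2(Y_i) \rar["\cong"] & F_2\bigl(\coprod_i Y_i\bigr)
\end{tikzcd}
\]
forces $\theta_{\coprod_i Y_i}$ to be an isomorphism, since a coproduct of isomorphisms is an isomorphism. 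In the contravariant case the same argument applies, with coproducts converted to products and products of isomorphisms being isomorphisms.

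Combining the previous steps, $\scS$ is a localizing subcategory of $\scT$, and by hypothesis it contains every compact object. Since $\scT$ is compactly generated, $\Loc(\scT^\rmc) = \scT$, hence $\scS = \scT$, proving that $\theta$ is a natural isomorphism. There is no real obstacle here: the argument is a textbook application of compact generation, and the only minor point requiring care is bookkeeping in the contravariant case, where one must remember to replace coproducts with products when transporting the isomorphism property along the canonical comparison maps.
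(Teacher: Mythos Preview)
Your proposal is correct and follows exactly the same approach as the paper: define the full subcategory of objects on which $\theta$ is an isomorphism, observe that it is localizing and contains $\scT^\rmc$, and conclude by compact generation. The paper's proof is terser (it simply asserts that this subcategory is localizing), whereas you spell out the closure under cones and coproducts explicitly, but the strategy is identical.
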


\begin{proof}
The subcategory $\scX=\set{X\in \scT}{\theta_X\colon F_1 X \to F_2 X \text{ is an isomorphism}}$ is a localizing subcategory of $\scT$ that contains $\scT^\rmc$. Consequently, $\scX=\scT$ and this proves the statement.
\end{proof}

\begin{lem}\label{lem:right-adj-action}
Let $F\colon \scK_1\to \scK_2$ be a coproduct and action-preserving triangulated functor between $\scT$-modules and let $G\colon \scK_2\to \scK_1$ be the right adjoint to $F$. Then $G$ is $\shom$-preserving. If $G$ is coproduct-preserving, then~$G$ is action-preserving. If $F$ is product-preserving, then $F$ is $\shom$-preserving.
\end{lem}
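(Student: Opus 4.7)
The plan is to construct all three natural isomorphisms as adjunction mates of the action-preserving datum $\phi$ of $F$, and, in the latter two parts, to verify that they are isomorphisms by reducing to compact $X\in \scT^\rmc$ via \Cref{lem:restrictions-to-compacts}. The key input on compacts is the rigidity isomorphism $\mathrm{ev}^i_{X,-}\colon X\ast_i -\xr{\cong}[X^\vee,-]_i$, valid for $X\in \scT^\rmc$ because such $X$ is rigid in $\scT$.

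For the first claim, I would stack adjunctions to obtain
\begin{align*}
\Hom_{\scK_1}(A,[X,GB]_1) &\cong \Hom_{\scK_1}(X\ast_1 A,GB)\\
&\cong \Hom_{\scK_2}(F(X\ast_1 A),B)\\
&\cong \Hom_{\scK_2}(X\ast_2 FA,B)\\
&\cong \Hom_{\scK_1}(FA,[X,B]_2)\\
&\cong \Hom_{\scK_1}(A,G[X,B]_2),
\end{align*}
natural in $A\in \scK_1$, $X\in \scT$ and $B\in \scK_2$, where the middle isomorphism is induced by $\phi_{X,A}$ and the rest by the action-hom adjunctions and $F\dashv G$. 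Yoneda then yields the desired natural isomorphism $\psi_{X,B}\colon [X,GB]_1\xr{\cong}G[X,B]_2$. The two coherence squares of \Cref{defn:hom-pres} translate, across these adjunctions, into the two coherence squares that $\phi$ satisfies in \Cref{defn:action-pres}, so $G$ is $\shom$-preserving.

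For the second claim, I would define $\eta_{X,B}\colon X\ast_1 GB\to G(X\ast_2 B)$ as the mate under $F\dashv G$ of the composite $F(X\ast_1 GB)\xr{\phi_{X,GB}}X\ast_2 FGB \xr{X\ast_2 \epsilon_B}X\ast_2 B$, where $\epsilon$ denotes the counit of $F\dashv G$. As functors of $X$ with $B$ fixed, both sides are triangulated and coproduct-preserving: the source by bilinearity of the action, the target because $G$ preserves coproducts by hypothesis and $X\ast_2 -$ preserves coproducts in $X$. By \Cref{lem:restrictions-to-compacts} it therefore suffices to check that $\eta_{X,B}$ is an isomorphism for $X\in \scT^\rmc$. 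For such $X$, a diagram chase using the definition of $\mathrm{ev}^i$ and the $F\dashv G$ adjunction identifies $\eta_{X,B}$ with $G(\mathrm{ev}^2_{X,B})^{-1}\circ \psi_{X^\vee,B}\circ \mathrm{ev}^1_{X,GB}$, which is an isomorphism by the first part. The coherence squares of \Cref{defn:action-pres} for $\eta$ follow, by mate calculus, from those satisfied by $\phi$.

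For the third claim, I would define $\chi_{X,A}\colon F[X,A]_1\to [X,FA]_2$ as the mate under $X\ast_2 -\dashv [X,-]_2$ of $X\ast_2 F[X,A]_1\xr{\phi^{-1}_{X,[X,A]_1}}F(X\ast_1 [X,A]_1)\xr{Fc^1_{X,A}}FA$. Regarded as contravariant functors of $X$, both $F[-,A]_1$ and $[-,FA]_2$ send coproducts in $\scT$ to products in $\scK_2$---the former using the hypothesis that $F$ preserves products---so \Cref{lem:restrictions-to-compacts} again reduces the problem to $X\in \scT^\rmc$, where applying $\mathrm{ev}^i$ with argument $X^\vee$ together with the action-preserving structure of $F$ produces the required isomorphism. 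The $\shom$-preservation coherence squares for $\chi$ follow, as in the first part, from those of $\phi$. The main obstacle is organizational rather than conceptual: while mate calculus makes each of the six coherence squares (two for each of $\psi$, $\eta$, $\chi$) a routine consequence of a diagram satisfied by $\phi$, correctly tracing these diagrams and verifying that the $\mathrm{ev}$-mediated identifications on compacts intertwine all relevant structure maps is a substantial bookkeeping exercise.
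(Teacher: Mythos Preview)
Your proposal is correct and follows essentially the same approach as the paper: the Yoneda/adjunction-stacking argument for $\psi$, the definition of the action-preserving structure on $G$ as the mate of $\phi$ under $F\dashv G$, and the reduction to compact $X$ via \Cref{lem:restrictions-to-compacts} using the rigidity isomorphism $\mathrm{ev}^i_{X,-}$. The paper carries out in full the diagram chase you flag as ``substantial bookkeeping,'' proving precisely that the square with vertical maps $\mathrm{ev}^1_{X,GB}$ and $G\mathrm{ev}^2_{X,B}$ and horizontal maps $\xi_{X,B}$ (your $\eta_{X,B}$) and $\psi_{X^\vee,B}$ commutes; for part three it likewise says the argument is similar and omits it.
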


\begin{proof}
We denote by $\eta\colon \Id_{\scK_1} \to GF$ and $\varepsilon \colon FG \to \Id_{\scK_2}$ the unit and the counit, respectively, of the adjunction $F\dashv G$. Let $A\in \scK_1$, $B\in \scK_2$ and $X\in \scT$. Then
\begin{align*}
\Hom_{\scK_1}(A,[X,GB]_1)&\cong \Hom_{\scK_1}(X\ast A,GB)\\
&\cong \Hom_{\scK_2}(F(X\ast A),B)\\
&\cong \Hom_{\scK_2}(X\ast FA,B)\\
&\cong \Hom_{\scK_2}(FA,[X,B]_2)\\
&\cong \Hom_{\scK_1}(A,G[X,B]_2).
\end{align*}
For $A=[X,GB]_1$, the image of the identity morphism on $[X,GB]_1$ under the above series of isomorphisms provides a natural (in both variables) isomorphism $\psi_{X,B}\colon [X,GB]_1 \to G[X,B]_2$ that satisfies the conditions of~\Cref{defn:hom-pres}, showing that $G$ is $\shom$-preserving. More precisely, $\psi_{X,B}$ is the following composite:
\begin{flalign*}
&[X,GB]_1 \xr{\eta_{[X,GB]_1}}
GF[X,GB]_1 \xr{Gu^2_{X,F[X,GB]_1}}
G[X,X \ast_2 F[X,GB]_1]_2&
\end{flalign*}
\begin{flalign*}
&\xr{G[X,\phi^{-1}_{X,[X,GB]_1}]_2}
G[X,F(X \ast_1 [X,GB]_1)]_2
\xr{G[X, F(c^1_{X,GB})]_2}
G[X,FGB]_2&
\end{flalign*}
\begin{flalign*}
&\xr{G[X,\varepsilon_B]_2} G[X,B]_2.&
\end{flalign*}

Now suppose that $G$ preserves coproducts. We define a natural transformation $\xi_{X,B}\colon X\ast_1 GB \to G(X\ast_2 B)$ as the composite:
\[
X\ast_1 GB \xr{\eta_{X\ast_1 GB}} GF(X\ast_1 GB) \xr{G\phi_{X,GB}} G(X\ast_2 FGB) \xr{G(X\ast_2 \varepsilon_B)} G(X\ast_2 B).
\]
We claim that the square
\begin{equation}\label{cd:nt}
\begin{tikzcd}[row sep=3.5em,column sep=3em]
X \ast_1 GB \rar["\xi_{X,B}"] \dar["\mathrm{ev}^1_{X,GB}"']& G(X\ast_2 B) \dar["G\mathrm{ev}^2_{X,B}"]
\\
{[}X^\vee,GB{]}_1 \rar["\psi_{X^\vee,B}","\cong"'] & G{[}X^\vee,B{]}_2
\end{tikzcd}
\end{equation}
commutes. First, square~\eqref{cd:nt} can be expanded as follows:
\[
\begin{tikzcd}[row sep=3.5em,scale cd=0.755]
\arrow[r,phantom,"(1)",shift right=2.3em]
X \ast_1 GB \rar["\eta_{X\ast_1 GB}"] \dar["\mathrm{ev}^1_{X,GB}"']
& GF(X \ast_1 GB) \rar["G\phi_{X,GB}"] \dar["GF\mathrm{ev}^1_{X,GB}"']
& G(X\ast_2 FGB) \rar["G(X\ast_2 \varepsilon_B)"] \dar["G\mathrm{ev}^2_{X,FGB}"]
\arrow[r,phantom,"(3)",shift right=2.3em]
& G(X\ast_2 B) \dar["G\mathrm{ev}^2_{X,B}"]
\\
{[}X^\vee,GB{]}_1 \rar["\eta_{{[}X^\vee,GB{]}_1}"]
& GF{[}X^\vee,GB{]}_1 \dar["Gu^2_{X^\vee,F{[}X^\vee,GB{]}_1}"']
\arrow[r,phantom,"(2)"]
& G{[}X^\vee,FGB{]}_2 \rar["G{[}X^\vee{,}\varepsilon_B{]}_2"]
& G{[}X^\vee,B{]}_2
\\
& G{[}X^\vee,X^\vee \ast_2 F{[}X^\vee,GB{]}_1{]}_2
\rar["G{[}X^\vee{,}\phi^{-1}_{X^\vee,{[}X^\vee,GB{]}_1}{]}_2"',outer sep=5pt]
& G{[}X^\vee,F(X^\vee \ast_1 {[}X^\vee,GB{]}_1){]}_2
\uar["G{[}X^\vee{,} F(c^1_{X^\vee,GB}){]}_2"']
\end{tikzcd}
\]
where square $(1)$ commutes by naturality of $\eta$ and square $(3)$ commutes by naturality of $\mathrm{ev}^2_X$. Therefore, in order to show that~\eqref{cd:nt} commutes, it suffices to show that diagram $(2)$ commutes. We will prove this slightly more generally. We claim that the following diagram commutes:
\begin{equation}\label{diag:middle}
\begin{tikzcd}[row sep=3.5em]
& F(X \ast_1 A) \rar["\phi_{X,A}"] \dar["F\mathrm{ev}^1_{X,A}"']
& X\ast_2 FA \dar["\mathrm{ev}^2_{X,FA}"]
\\
& F[X^\vee, A]_1 \dar["u^2_{X^\vee,F[X^\vee,A]_1}"']
& {[X^\vee, FA]_2}
\\
& {[X^\vee,X^\vee \ast_2 F[X^\vee,A]_1]_2}
\rar["{[X^\vee,\phi^{-1}_{X^\vee,[X^\vee,A]_1}]_2}"',outer sep=5pt]
& {[X^\vee,F(X^\vee \ast_1 [X^\vee,A]_1)]_2}
\uar["{[X^\vee, F(c^1_{X^\vee,A})]_2}"'].
\end{tikzcd}
\end{equation}
Set 
\[
\begin{array}{ll}
f_1 = u^2_{X^\vee,F{[X^\vee,X^\vee\ast_1(X\ast_1 A)]_1}},
& g_1 = [X^\vee,\phi^{-1}_{X^\vee,{[}X^\vee,X^\vee \ast_1 (X\ast_1 A){]}_1}]_2,
\\
f_2 = u^2_{X^\vee,F{[}X^\vee,(X^\vee\ot X) \ast_1 A{]}_1},
& g_2 = [X^\vee,\phi^{-1}_{X^\vee,{[}X^\vee,(X^\vee\ot X)\ast_1 A{]}_1}]_2,
\\
f_3 = u^2_{X^\vee,F{[}X^\vee,(X\ot X^\vee) \ast_1 A{]}_1},
& g_3 = [X^\vee,\phi^{-1}_{X^\vee,{[}X^\vee,(X\ot X^\vee)\ast_1 A{]}_1}]_2,
\\
f_4 = u^2_{X^\vee,F{[}X^\vee,1\ast_1A{]}_1},
& g_4 = [X^\vee,\phi^{-1}_{X^\vee,{[}X^\vee,1\ast_1 A{]}_1}]_2,
\\
f_5 = u^2_{X^\vee,F[X^\vee,A]_1},
& g_5 = [X^\vee,\phi^{-1}_{X^\vee,[X^\vee,A]_1}]_2,
\end{array}
\]
\[
\begin{array}{l}
h_1 = [X^\vee,X^\vee \ast_2 \phi_{X,A}]_2 \circ [X^\vee,\phi_{X^\vee, X\ast_1 A}]_2 \circ [X^\vee,Fc^1_{X^\vee \ast_1 (X\ast_1 A)}]_2,\\
h_2 = [X^\vee,\phi_{X^\vee\ot X, A}]_2 \circ [X^\vee,Fc^1_{X^\vee, (X^\vee\ot X)\ast_1 A}]_2,\\
h_3 = [X^\vee,\phi_{X\ot X^\vee, A}]_2 \circ [X^\vee,Fc^1_{X^\vee, (X\ot X^\vee)\ast_1 A}]_2,\\
h_4 = [X^\vee,\phi_{1,A}]_2 \circ [X^\vee,Fc^1_{X^\vee,1\ast_1 A}]_2,\\
h_5 = [X^\vee, F(c^1_{X^\vee,A})]_2,
\end{array}
\]
and expand diagram~\eqref{diag:middle} as below:
\begin{flalign*}
&\begin{tikzcd}[scale cd=0.575,column sep=1em,row sep=3em,ampersand replacement=\&]
F(X \ast_1 A)
\dar["Fu^1_{X^\vee,X\ast_1 A}"'description]
\arrow[rrr,"\cong"',"\phi_{X,A}"]
\&\&\& X\ast_2 FA
\dar["u^2_{X^\vee,X\ast_2 FA}"description]
\\
\arrow[r,phantom,"(1)",shift right=2.3em]
F{[}X^\vee,X^\vee\ast_1(X\ast_1 A){]}_1
\dar["F{[X^\vee,\ga^1_{X^\vee,X,A}]_1}"'description]
\rar["f_1"]
\& {[}X^\vee,X^\vee \ast_2 F{[}X^\vee,X^\vee \ast_1 (X\ast_1 A){]}_1{]}_2
\dar["{[X^\vee,X^\vee\ast_2 F{[X^\vee,\ga^1_{X^\vee,X,A}]_1}]_2}"description]
\rar["g_1"]
\arrow[r,phantom,"(2)",shift right=2.3em]
\& {[}X^\vee,F(X^\vee \ast_1{[}X^\vee,X^\vee\ast_1 (X\ast_1 A){]}_1){]}_2
\dar["{[X^\vee,F(X^\vee \ast_1 [X^\vee,\ga^1_{X^\vee,X,A}]_1)]_2}"description]
\rar["h_1"]
\arrow[r,phantom,"(3)",shift right=2.3em]
\& {[}X^\vee,X^\vee\ast_2(X\ast_2 FA){]}_2
\dar["{[X^\vee,\ga^2_{X^\vee,X,FA}]_2}"description]
\\
F{[}X^\vee,(X^\vee\ot X) \ast_1 A{]}_1
\dar["F{[X^\vee,\gs_{X^\vee,X}\ast_1 A]_1}"'description]
\rar["f_2"]
\arrow[r,phantom,"(4)",shift right=2.3em]
\& {[}X^\vee,X^\vee \ast_2 F{[}X^\vee,(X^\vee\ot X)\ast_1 A{]}_1{]}_2
\dar["{[X^\vee,X^\vee\ast_2 F{[X^\vee,\gs_{X^\vee,X}\ast_1 A]_1}]_2}"description]
\rar["g_2"]
\arrow[r,phantom,"(5)",shift right=2.3em]
\& {[}X^\vee,F(X^\vee \ast_1{[}X^\vee,(X^\vee\ot X)\ast_1 A{]}_1){]}_2
\dar["{[X^\vee,F(X^\vee \ast_1 [X^\vee,\gs_{X^\vee,X}\ast_1 A]_1)]_2}"description]
\rar["h_2"]
\arrow[r,phantom,"(6)",shift right=2.3em]
\& {[}X^\vee,(X^\vee\ot X) \ast_2 FA{]}_2
\dar["{[X^\vee,\gs_{X^\vee,X}\ast_2 FA]_2}"description]
\\
F{[}X^\vee,(X\ot X^\vee) \ast_1 A{]}_1
\dar["F{[X^\vee,c_{X,1}\ast_1 A]_1}"'description]
\rar["f_3"]
\arrow[r,phantom,"(7)",shift right=2.3em]
\& {[}X^\vee,X^\vee \ast_2 F{[}X^\vee,(X\ot X^\vee)\ast_1 A{]}_1{]}_2
\dar["{[X^\vee,X^\vee\ast_2 F{[X^\vee,c_{X,1}\ast_1 A]_1}]_2}"description]
\rar["g_3"]
\arrow[r,phantom,"(8)",shift right=2.3em]
\& {[}X^\vee,F(X^\vee \ast_1{[}X^\vee,(X\ot X^\vee)\ast_1 A{]}_1){]}_2
\dar["{[X^\vee,F(X^\vee \ast_1 [X^\vee,c_{X,1}\ast_1 A]_1)]_2}"description]
\rar["h_3"]
\arrow[r,phantom,"(9)",shift right=2.3em]
\& {[}X^\vee,(X\ot X^\vee) \ast_2 FA{]}_2
\dar["{[X^\vee,c_{X,1}\ast_2 FA]_2}"description]
\\
F{[}X^\vee,1\ast_1A{]}_1
\dar["F{[X^\vee,l^1_A]_1}"']
\rar["f_4"]
\arrow[r,phantom,"(10)",shift right=2.3em]
\& {[}X^\vee,X^\vee \ast_2 F{[}X^\vee,1\ast_1 A{]}_1{]}_2
\dar["{[X^\vee,X^\vee\ast_2 F{[X^\vee,l^1_A]_1}]_2}"description]
\rar["g_4"]
\arrow[r,phantom,"(11)",shift right=2.3em]
\& {[}X^\vee,F(X^\vee \ast_1 {[}X^\vee,1\ast_1 A{]}_1){]}_2
\dar["{[X^\vee,F(X^\vee \ast_1 [X^\vee,l^1_A]_1)]_2}"description]
\rar["h_4"]
\arrow[r,phantom,"(12)",shift right=2.3em]
\& {[}X^\vee,1 \ast_2 FA{]}_2
\dar["{[X^\vee,l^2_{FA}]_2}"description]
\\
F{[}X^\vee,A{]}_1
\rar["f_5"]
\& {[}X^\vee,X^\vee\ast_2 F{[}X^\vee,A{]}_1{]}_2
\rar["g_5"]
\& {[}X^\vee,F(X^\vee \ast_1 {[}X^\vee,A{]}_1){]}_2
\rar["h_5"]
\& {[}X^\vee,FA{]}_2
\end{tikzcd}&
\end{flalign*}
It is clear that the squares $(1), (2), \ldots, (12)$ commute. Thus, it remains to show that $u^2_{X^\vee,X\ast_2 FA} \circ \phi_{X,A}=h_1 \circ g_1 \circ f_1 \circ Fu^1_{X^\vee,X\ast_1 A}$. Indeed,
\begin{align*}
h_1 \circ g_1 \circ f_1 \circ Fu^1_{X^\vee,X\ast_1 A}
&=[X^\vee,X^\vee \ast_2 \phi_{X,A}]_2 \circ [X^\vee,\phi_{X^\vee, X\ast_1 A}]_2
\\
&\circ [X^\vee,Fc^1_{X^\vee \ast_1 (X\ast_1 A)}]_2 \circ 
[X^\vee,\phi^{-1}_{X^\vee,{[}X^\vee,X^\vee \ast_1 (X\ast_1 A){]}_1}]_2
\\
&\circ u^2_{X^\vee,F{[X^\vee,X^\vee\ast_1(X\ast_1 A)]_1}} \circ Fu^1_{X^\vee,X\ast_1 A}
\\
&=[X^\vee,X^\vee \ast_2 \phi_{X,A}]_2 \circ [X^\vee,\phi_{X^\vee, X\ast_1 A}]_2
\\
&\circ [X^\vee,Fc^1_{X^\vee \ast_1 (X\ast_1 A)}]_2 \circ [X^\vee,\phi^{-1}_{X^\vee,{[}X^\vee,X^\vee \ast_1 (X\ast_1 A){]}_1}]_2
\\
&\circ [X^\vee,X^\vee\ast_2 Fu^1_{X^\vee,X\ast_1 A}]_2\circ u^2_{X^\vee,F(X\ast_1 A)}
\\
&=[X^\vee,X^\vee \ast_2 \phi_{X,A}]_2 \circ [X^\vee,\phi_{X^\vee, X\ast_1 A}]_2
\\
&\circ [X^\vee,Fc^1_{X^\vee \ast_1 (X\ast_1 A)}]_2 \circ [X^\vee,F(X^\vee\ast_1 u^1_{X^\vee,X\ast_1 A})]_2
\\
&\circ [X^\vee,\phi^{-1}_{X^\vee,X\ast_1 A}]_2 \circ u^2_{X^\vee,F(X\ast_1 A)}
\\
&=[X^\vee,X^\vee \ast_2 \phi_{X,A}]_2 \circ [X^\vee,\phi_{X^\vee, X\ast_1 A}]_2
\\
&\circ [X^\vee,\phi^{-1}_{X^\vee,X\ast_1 A}]_2 \circ u^2_{X^\vee,F(X\ast_1 A)}
\\
&=[X^\vee,X^\vee \ast_2 \phi_{X,A}]_2 \circ u^2_{X^\vee,F(X\ast_1 A)}
\\
&=u^2_{X^\vee,X\ast_2 FA} \circ \phi_{X,A}.
\end{align*}
We conclude that diagram~\eqref{diag:middle} commutes and, as a result, square~\eqref{cd:nt} commutes. If $X\in \scT^\rmc$, then by~\cite[Lemma 4.6]{Stevenson13}, $\mathrm{ev}^i_{X,-}$ is an isomorphism. Consequently, the restriction of $\xi_{-,B}\colon - \ast_1 GB \to G(-\ast_2 B)$ to the compact objects of $\scT$ is a natural isomorphism. Since the triangulated functors $-\ast_1 GB$ and $G(-\ast_2 B)$ are coproduct-preserving, it follows by~\Cref{lem:restrictions-to-compacts} that $\xi_{-,B}$ is a natural isomorphism. It is easy to verify that the conditions of~\Cref{defn:action-pres} are satisfied. We conclude that $G$ is action-preserving.

The proof that if $F$ preserves products, then $F$ is $\shom$-preserving is similar and left to the interested reader.
\end{proof}

Let $\scK$ be a $\scT$-module. A subcategory $\scL\subseteq \scK$ is called a \emph{localizing} \emph{submodule} if~$\scL$ is a localizing subcategory such that $X\ast A\in \scL,\, \forall X\in \scT,\, \forall A\in \scL$. The collection of localizing submodules of $\scK$ is denoted by $\Loca(\scK)$. A subcategory $\scC \subseteq \scK$ is called a \emph{colocalizing} $\shom$-\emph{submodule} if $\scC$ is a colocalizing subcategory such that $[X,A]_\ast\in \scC,\, \forall X\in \scT,\, \forall A\in \scC$. The collection of colocalizing $\shom$-submodules of~$\scK$ is denoted by $\Coloch(\scK)$. Let $A$ be an object of $\scK$. The localizing (resp.~colocalizing) submodule of $\scK$ generated (resp.~cogenerated) by $A$, i.e., the smallest localizing (resp.~colocalizing) submodule of $\scK$ that contains $A$, is denoted by $\loca(A)$ (resp.~$\coloch(A)$). Specializing to the case $\scK=\scT$ and $\ast=\ot$, we obtain the notions of localizing tensor-ideal and colocalizing left $\shom$-ideal.

The subcategory $\Ann_\scT(\scK)\coloneqq\set{X\in \scT}{X\ast -=0}=\bigcap_{A\in \scK}\Ker(-\ast A)\subseteq \scT$ is a localizing tensor-ideal of $\scT$, which is called the \emph{annihilator} of $\scK$ in $\scT$. If $\Ann_\scT(\scK)=0$ (for instance, when $\scK=\scT$ and $\ast=\ot$) then $\scK$ is called a \emph{conservative} $\scT$-module.

\begin{lem}\label{lem:images-of-ideals}
Let $\scK_1,\scK_2$ be two $\scT$-modules and let $\scA\subseteq \Ob(\scK_1)$ and $\scB\subseteq \Ob(\scK_2)$.
\begin{enumerate}[\rm(a)]
\item
If $F\colon \scK_1\to \scK_2$ is a coproduct and action-preserving triangulated functor, then $F(\loca(\scA))\subseteq \loca(F\scA)$.
\item
If $F\colon \scK_1^{\mathrm{op}} \to \scK_2$ is a triangulated functor that sends coproducts to products and $F(X\ast A)\cong [X,FA]_{\ast},\, \forall X\in \scT,\, \forall A\in \scK_1$, then $F(\loca(\scA))\subseteq \coloch(F\scA)$.
\item
If $G\colon \scK_2 \to \scK_1$ is a product and $\shom$-preserving triangulated functor, then $G(\coloch(\scB))\subseteq \coloch(G\scB)$.
\end{enumerate}
\end{lem}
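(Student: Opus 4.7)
The plan is to handle all three parts with the same preimage-of-a-good-subcategory strategy. For (a), I would set $\scL \coloneqq \loca(F\scA) \subseteq \scK_2$ and consider the preimage $\scX \coloneqq \{A \in \scK_1 : FA \in \scL\}$. It suffices to show that $\scX$ is a localizing submodule of $\scK_1$ containing $\scA$, for then minimality of $\loca(\scA)$ gives $\loca(\scA) \subseteq \scX$, which translates to $F(\loca(\scA)) \subseteq \scL$, as required. Containment of $\scA$ is immediate. Closure of $\scX$ under triangles and coproducts follows from $F$ being a coproduct-preserving triangulated functor together with the analogous closures for $\scL$. The submodule condition uses the action-preserving natural isomorphism $F(X \ast_1 -) \cong X \ast_2 F(-)$: if $A \in \scX$ and $X \in \scT$, then $F(X \ast_1 A) \cong X \ast_2 FA \in \scL$, since $\scL$ is a submodule and $FA \in \scL$.

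For (b), the plan is the same, but with a variance twist. Set $\scC \coloneqq \coloch(F\scA) \subseteq \scK_2$ and $\scX \coloneqq \{A \in \scK_1 : FA \in \scC\}$, and verify that $\scX$ is a localizing submodule of $\scK_1$ containing $\scA$. The key observation is that, because $F$ is contravariant and sends coproducts to products, closure of $\scX$ under coproducts matches up with closure of $\scC$ under products, which holds since colocalizing subcategories are product-closed. The $\shom$-submodule hypothesis on $\scC$ is what powers the submodule clause: for $A \in \scX$ and $X \in \scT$, the assumed isomorphism $F(X \ast A) \cong [X, FA]_\ast$ places $F(X \ast A)$ in $\scC$, so $X \ast A \in \scX$.

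For (c), I would set $\scC \coloneqq \coloch(G\scB) \subseteq \scK_1$ and $\scX \coloneqq \{B \in \scK_2 : GB \in \scC\}$, and show that $\scX$ is a colocalizing $\shom$-submodule of $\scK_2$ containing $\scB$. Triangulated closure and closure under products follow from $G$ being a product-preserving triangulated functor together with the corresponding properties of $\scC$. For the $\shom$-submodule clause, I would invoke the $\shom$-preserving natural isomorphism $[X, GB]_1 \cong G[X, B]_2$: if $GB \in \scC$, then $G[X, B]_2 \cong [X, GB]_1 \in \scC$, since $\scC$ is $\shom$-closed, whence $[X, B]_2 \in \scX$.

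I do not anticipate any substantive obstacle. In each part the argument is a routine verification that the preimage of a good subcategory is itself good, leveraging exactly the preservation hypothesis placed on the functor. The only moment that requires attention is the variance swap in (b), where one must line up the fact that $F$ carries coproducts to products with the fact that colocalizing subcategories are product-closed; once this is noted, all three arguments run in parallel.
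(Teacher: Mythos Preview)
Your proposal is correct and matches the paper's approach exactly: the paper defines the same preimage subcategory $\scX$, asserts it is a localizing submodule containing $\scA$, and concludes by minimality, with (b) and (c) declared ``similar.'' Your write-up simply spells out the closure verifications and the variance twist in (b) that the paper leaves implicit.
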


\begin{proof}
We will prove $(\rma)$. The subcategory $\scX=\set{A\in \scK_1}{FA\in \loca(F\scA)}$ is a localizing submodule of $\scK_1$ that contains $\scA$. Therefore, $\scX$ contains $\loca(\scA)$, proving the statement. The proofs of $(\rmb)$ and $(\rmc)$ are similar.
\end{proof}

\subsection{Balmer--Favi support}
Let $\Spc$ be the Balmer spectrum; see~\cite{Balmer05}. Then a point $\mfp\in \Spc$ is called \emph{visible} (\emph{weakly visible} in~\cite{BarthelHeardSanders23}) if there exist Thomason subsets $V,W$ of $\Spc$ such that $\{\mfp\}=V\cap (\Spc\setminus W)$~\cite{BalmerFavi11,Stevenson13}. Particularly, if $\Spc$ is noetherian, then every point of $\Spc$ is visible. The subsets $V$ and $W$ correspond to thick tensor-ideals $\scT^\rmc_V,\, \scT^\rmc_W$ of compact objects. Let $\scT_V=\loct(\scT^\rmc_V)$ and $\scT_W=\loct(\scT^\rmc_W)$. (It should be noted that the localizing subcategories generated by $\scT^\rmc_V$ and $\scT^\rmc_W$ are already tensor-ideals.) Since the ideals $\scT_V$ and $\scT_W$ are compactly generated, they are smashing ideals~\cite{Miller92}. Therefore, they have associated left and right idempotents $e_V,f_V$ and $e_W,f_W$, respectively. Let $g_\mfp=e_V\ot f_W$. Then the objects $\set{g_\mfp}{\mfp\in \Spc \text{ and } \mfp \text{ is visible}}$ are pairwise-orthogonal tensor-idempotents. Let $X$ be an object of $\scT$ and assume that all points of $\Spc$ are visible. Then the \emph{Balmer--Favi} support of $X$ is $\Supp(X)=\set{\mfp\in \Spc}{g_\mfp \ot X\neq 0}$.
\begin{lem}[{\cite[Proposition 7.17]{BalmerFavi11}}]
The map
\[
\Supp(-)\colon \Ob(\scT) \to \scP(\Spc),\ X\mapsto \Supp(X)
\]
satisfies the following properties:
\begin{enumerate}[\rm(a)]
\item
$\Supp(0)=\varnothing \quad \& \quad \Supp(1)=\Spc$.
\item
$\Supp(\coprod_{i\in I}X_i)=\bigcup_{i\in I}\Supp(X_i)$.
\item
$\Supp(\gS X)=\Supp(X)$.
\item
$\Supp(Y)\subseteq \Supp(X)\cup \Supp(Z)$, for any triangle $X\to Y\to Z$.
\item
$\Supp(X\ot Y)\subseteq \Supp(X)\cap \Supp(Y)$.
\item
$\Supp(x\ot y)=\Supp(x)\cap \Supp(y),\ \forall x,y\in \scT^\rmc$.
\end{enumerate}
\end{lem}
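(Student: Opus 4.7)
The plan is to read each assertion directly off the definition $\Supp(X)=\{\mfp\in \Spc : g_\mfp\ot X \neq 0\}$, exploiting three standing properties of the idempotents that are baked into the construction: (i) each $g_\mfp$ is a tensor-idempotent, so $g_\mfp\ot g_\mfp \cong g_\mfp$; (ii) the functor $g_\mfp \ot (-)\colon \scT \to \scT$ is exact and coproduct-preserving; and (iii) under our blanket visibility assumption, $g_\mfp \neq 0$ for every $\mfp\in \Spc$. Parts (a)--(e) will then be essentially formal; part (f) is the one that requires non-trivial input from classical tt-geometry, and I expect it to be the only real obstacle.

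Parts (a)--(e) I would dispatch as follows. For (a), $g_\mfp \ot 0 = 0$ gives $\Supp(0)=\varnothing$, while $g_\mfp \ot 1 \cong g_\mfp \neq 0$ by (iii) gives $\Supp(1)=\Spc$. Part (b) follows because $g_\mfp\ot(-)$ preserves coproducts and a coproduct is nonzero iff some summand is. For (c), $g_\mfp \ot \gS X \cong \gS(g_\mfp\ot X)$ is nonzero precisely when $g_\mfp\ot X$ is. For (d), tensoring the triangle $X\to Y\to Z$ with $g_\mfp$ yields a distinguished triangle, so vanishing of the two outer terms forces vanishing of the middle. For (e), the isomorphisms $g_\mfp\ot X\ot Y \cong (g_\mfp\ot X)\ot Y$ and $g_\mfp\ot X\ot Y \cong X\ot (g_\mfp\ot Y)$ (the second obtained by inserting $g_\mfp\ot g_\mfp\cong g_\mfp$ and rebracketing) give $\Supp(X\ot Y)\subseteq \Supp(X)$ and $\Supp(X\ot Y)\subseteq \Supp(Y)$ by contraposition.

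The only non-formal step is (f), and it is the main hurdle. My strategy is to first establish, for compact $x\in \scT^\rmc$, the identification of the Balmer--Favi support with the classical Balmer support, i.e.\ $\Supp(x) = \supp(x) = \{\mfp : x\notin \mfp\}$. The direction $x\in \mfp \Rightarrow g_\mfp\ot x = 0$ proceeds by writing $g_\mfp = e_V\ot f_W$ for Thomason subsets $V, W$ with $V\setminus W=\{\mfp\}$ and noting that if $x\in \mfp$ then $\supp(x)\subseteq (\Spc\setminus\{\mfp\}) = W\cup(\Spc\setminus V)$, so $x$ is built from $\scT^\rmc_W$ and objects whose support misses $V$, each of which is annihilated by $e_V\ot f_W$; the reverse direction uses that if $x\notin \mfp$ then the image of $x$ under the composite smashing localization cutting out $\{\mfp\}$ is nonzero. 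With this identification in place, the desired equality $\Supp(x\ot y) = \Supp(x)\cap \Supp(y)$ is exactly Balmer's classical formula $\supp(x\ot y)=\supp(x)\cap \supp(y)$ for the support on compact objects.

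Thus the bulk of the proof is bookkeeping with tensor-idempotents and exact functors, and the only real work is isolated in (f), where the link between the Balmer--Favi formalism and Balmer's classification of thick tensor-ideals on $\scT^\rmc$ has to be made explicit through a careful analysis of how $g_\mfp = e_V\ot f_W$ detects membership of $\mfp$ in the support of a compact object.
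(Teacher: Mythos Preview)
The paper does not supply its own proof of this lemma: it is simply quoted from \cite[Proposition~7.17]{BalmerFavi11} with no argument given, so there is nothing in the paper to compare your proposal against. Your proof of (a)--(e) is correct and standard, and your strategy for (f)---reducing to the identification $\Supp(x)=\supp(x)$ for $x\in\scT^\rmc$ and then invoking Balmer's classical formula $\supp(x\ot y)=\supp(x)\cap\supp(y)$---is exactly the route taken in the cited Balmer--Favi reference.

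One small comment on the execution of (f): your sketch of the implication $x\in\mfp \Rightarrow g_\mfp\ot x=0$ is a little loose as written. The cleanest way to run it is not to ``build $x$ from $\scT^\rmc_W$ and objects whose support misses $V$'' but rather to argue directly with the idempotents: from $\supp(x)\subseteq W\cup(\Spc\setminus V)$ one gets $\supp(x)\cap V\subseteq W$, so $e_V\ot x\in\scT_V$ has (classical) support contained in $W$, hence lies in $\scT_W$, and therefore $f_W\ot(e_V\ot x)=0$. The converse direction is similarly best phrased via the classification of thick tensor-ideals: $\ker(g_\mfp\ot-)\cap\scT^\rmc$ is a thick tensor-ideal, and one checks it equals $\mfp$. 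None of this is a gap in your argument, just a suggestion to tighten the wording.
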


Let $V\subseteq \Spc$ be a Thomason subset with complement $U$. The quotient $\scT/\scT_V$ is denoted by $\scT(U)$. Since $\scT_V$ is smashing, $\scT(U)$ is a big tt-category and moreover, $\mathrm{Spc}(\scT(U)^\rmc)\cong U$; see~\cite[Proposition 1.11]{BalmerFavi07}.

\subsection{Smashing support}
Following~\cite{BalchinStevenson23}, we briefly recall some facts concerning the smashing spectrum of a big tt-category $\scT$. We denote by $\Sm(\scT)$ the lattice of smashing tensor-ideals of $\scT$. Then, under the hypothesis that $\Sm(\scT)$ is a spatial frame, there is a space $\Spcs$ associated with $\Sm(\scT)$ via Stone duality, called the \emph{smashing spectrum} of~$\scT$ that consists of the \emph{meet-prime smashing ideals} of $\scT$, i.e., those smashing ideals~$P$ such that $\scS_1\cap \scS_2\subseteq P \Rightarrow \scS_1 \subseteq P$ or $\scS_2\subseteq P,\, \forall \scS_1,\scS_2\in \Sm(\scT)$. The open subsets of $\Spcs$ stand in bijection with the smashing tensor-ideals and are of the form $U_\scS=\set{P\in \Spcs}{\scS\nsubseteq P}$, where $\scS\in \Sm(\scT)$. The closed subsets of $\Spcs$ are of the form $V_\scS=\set{P\in \Spcs}{\scS\subseteq P}$. A point $P\in \Spcs$ is called \emph{locally closed} if $\{P\}=U_\scS\cap V_\scR$, for some smashing ideals $\scS,\scR$. If $P$ is locally closed, then the ideal $\scR$ can be replaced by $P$ in the sense that $\{P\}=U_\scS\cap V_P$. Each smashing ideal $\scS$ corresponds to a left idempotent $e_\scS$ and a right idempotent $f_\scS$, which are the images of the tensor-unit of $\scT$ under the associated acyclization and localization functors, respectively; see~\cite{BalmerFavi11}. If~$P$ is locally closed and $\{P\}=U_\scS\cap V_P$, then the \emph{Rickard idempotent} associated with~$P$ is $\Gamma_P=e_\scS\ot f_P$. If every point of $\Spcs$ is locally closed, then $\Spcs$ is called $T_D$. Let $X$ be an object of $\scT$. The \emph{big smashing support} of $X$ is the subset $\supps(X)=\set{P\in \Spcs}{X\notin P}$. If $\Spcs$ is $T_D$, then the \emph{small smashing support} of $X$ is $\Supps(X)=\set{P\in \Spcs}{\Gamma_P \ot X\neq 0}$. It holds that $\Supps(X)\subseteq \supps(X)$, with the two being equal if $X\in \scT^\rmc$. The analogous properties stated in the following lemma hold for the small smashing support as well.

\begin{lem}[{\cite[Lemma 3.2.8]{BalchinStevenson23}}]
\label{lem:Supps-properties}%
The map
\[
\supps(-)\colon \Ob(\scT)\to \scP(\Spcs),\ X\mapsto \supps(X)
\]
satisfies the following properties:
\begin{enumerate}[\rm(a)]
\item
\label{property:Supp0-Supp1}%
$\supps(0)=\varnothing \quad \& \quad \supps(1)=\Spcs$.
\item
\label{property:Supp-coprod}%
$\supps(\coprod_{i\in I}X_i)=\bigcup_{i\in I}\supps(X_i)$.
\item
\label{property:Supp-suspension}%
$\supps(\gS X)=\supps(X)$.
\item
\label{property:Supp-triangles}%
$\supps(Y)\subseteq \supps(X)\cup \supps(Z)$, for any triangle $X\to Y\to Z$.
\item
\label{property:Supp-weak-tpf}%
$\supps(X\ot Y)\subseteq \supps(X)\cap \supps(Y)$.
\item
\label{property:Supp-compacts}%
$\supps(x\ot y)=\supps(x)\cap \supps(y),\ \forall x,y\in \scT^\rmc$.
\end{enumerate}
\end{lem}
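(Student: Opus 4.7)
The plan is to unpack the definition $\supps(X)=\{P\in \Spcs : X\notin P\}$ and translate each item into a statement about which objects lie in an arbitrary meet-prime smashing ideal $P$. Properties (a)--(e) will follow immediately from the fact that every such $P$ is a proper localizing tensor-ideal, and meet-primality will only be needed for the non-trivial half of (f).

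For (a), the tensor unit $1$ lies outside every proper smashing ideal (in particular, outside every meet-prime), while $0$ lies in every subcategory, so $\supps(1)=\Spcs$ and $\supps(0)=\varnothing$. For (b), each $X_i$ is a split summand of $\coprod_{i\in I} X_i$, so closure of $P$ under summands and coproducts gives $\coprod_{i\in I} X_i\in P$ iff every $X_i\in P$; complementing yields the claimed union. Properties (c) and (d) are direct consequences of $P$ being a triangulated subcategory (closed under suspensions and under extensions in triangles), and (e) is the tensor-ideal axiom for $P$.

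For (f), the inclusion $\supps(x\ot y)\subseteq \supps(x)\cap \supps(y)$ is a special case of (e). For the reverse, it suffices to show that if $x\ot y\in P$ for compact $x,y$, then $x\in P$ or $y\in P$. The key input is the identity $\loct(x)\cap \loct(y)=\loct(x\ot y)$, which arises from the intersection-preserving bijection between thick tensor-ideals of $\scT^\rmc$ and compactly generated smashing ideals of $\scT$, combined with the identity $\thick(x)\cap \thick(y)=\thick(x\ot y)$ on the compact side (a formal consequence of the classification of thick tensor-ideals by Thomason subsets of $\Spc$ together with the tensor-product formula for compacts). Granting this, if $x\ot y\in P$ then $\loct(x)\cap \loct(y)=\loct(x\ot y)\subseteq P$, and meet-primality of $P$ in $\Sm(\scT)$ forces $\loct(x)\subseteq P$ or $\loct(y)\subseteq P$, hence $x\in P$ or $y\in P$. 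The main obstacle is locating (or reproving) this intersection formula for compact objects; once it is in hand, every property reduces to a short verification using the defining structure of a meet-prime smashing ideal, and no further delicate argument is needed.
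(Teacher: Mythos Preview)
The paper does not supply its own proof of this lemma; it is quoted verbatim from \cite[Lemma 3.2.8]{BalchinStevenson23} as background material, so there is nothing in the present paper to compare your argument against.

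That said, your argument is correct. Parts (a)--(e) are exactly the routine verifications you describe, using only that each $P\in\Spcs$ is a proper localizing tensor-ideal. For (f), your reduction to the identity $\loct(x)\cap\loct(y)=\loct(x\ot y)$ and then to meet-primality is the right idea, and your justification via Balmer's classification of thick tensor-ideals is valid. One small point: when you say the bijection between thick tensor-ideals of $\scT^\rmc$ and compactly generated smashing ideals is ``intersection-preserving'', you are implicitly using that the intersection of two compactly generated smashing ideals is again compactly generated; this is true, but it is precisely what needs to be checked, so the argument is a little circular as written. A cleaner route, avoiding the Balmer classification entirely, is to argue directly with idempotents: if $e_x,e_y$ are the left idempotents for $\loct(x),\loct(y)$, then $\loct(x)\cap\loct(y)=\loct(e_x\ot e_y)$, and one shows $e_x\ot e_y\in\loct(x\ot y)$ by observing that $\{V\in\scT: V\ot W\in\loct(x\ot W)\}$ is a localizing tensor-ideal containing $x$ (hence $e_x$) for every $W$, and iterating. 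Either way, once the intersection identity is in hand your conclusion via meet-primality is immediate.
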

There is a surjective continuous map $\psi\colon \Spcs \to \mathrm{Spc}(\scT^\rmc)^\vee$, where $\mathrm{Spc}(\scT^\rmc)^\vee$ denotes the Hochster dual of $\Spc$, that sends $P\in \Spcs$ to $P\cap \scT^\rmc$. According to~\cite[Corollary 5.1.5]{BalchinStevenson23}, the map $\psi$ is a homeomorphism if and only if $\scT$ satisfies the Telescope Conjecture (meaning that every smashing ideal is generated by compact objects). For further discussion on the smashing spectrum and related concerns, see also~\cite{Verasdanis23}.

\begin{hyp}\label{hyp:hyp-spatial}%
Throughout the paper, whenever we state results concerning the smashing spectrum $\Spcs$, we will always assume that the frame $\Sm(\scT)$ of smashing ideals of $\scT$ is a spatial frame.
\end{hyp}

\section{Stratification--costratification}\label{sec:costrat}

Fix a big tt-category $\scT$ and a $\scT$-module $\scK$. We always assume that $\scK$ is compactly generated. Let us recall some well-known facts concerning Brown--Comenetz duals of compact objects. Let $x$ be a compact object of $\scT$. Then $H_x\coloneqq\Hom_\bbZ(\Hom_\scT(x,-),\bbQ/\bbZ)\colon \scT^\rmop\to \Ab$ is a cohomological functor that sends coproducts to products. So, by Brown Representability, $H_x$ is representable. The representing object of $H_x$ is denoted by $Ix$ and is called the \emph{Brown--Comenetz dual} of $x$. The functor $\Hom_\scT(-,Ix)\rvert_{\scT^\rmc}$ is an injective object of $\Modc$, the abelian category of additive functors $\tcop\to \Ab$; see~\cite{Neeman98}. Hence, by~\cite[Theorem 1.8]{Krause00} (see also~\cite[Theorem 8.6]{Beligiannis00}) $Ix$ is pure-injective. Choosing a skeleton for the subcategory of compact objects, the product of the associated Brown--Comenetz duals is denoted by $I$. Being a product of pure-injective objects, $I$ is also pure-injective. Using the fact that $\scT$ is compactly generated, one can easily check that $I$ is a cogenerator of $\scT$ in the sense that $\Hom_\scT(X,\gS^nI)=0,\, \forall n\in \bbZ$ implies that $X=0$. It holds that $\scT=\coloc(I)$. This follows from the fact that the Brown--Comenetz duals of the compact objects form a perfect cogenerating set for $\scT$; see~\cite{Krause02}. We use the symbol $I_\scT$ if there is any possibility for confusion. Similarly, $\scK$ has a pure-injective cogenerator $I_\scK$.

One could choose any object $X\in \scT$ (resp.~$A\in \scK$) such that $\scT=\coloc(X)$ (resp.~$\scK=\coloc(A)$) and state everything that follows with respect to $X$ (resp.~$A$) instead of $I_\scT$ (resp.~$I_\scK$). The discussion above simply provides a concrete construction of such cogenerators.

\subsection{Support--cosupport}
Fix a topological space $S$.
\begin{defn}[{See also~\cite[Definition 7.1]{BarthelHeardSanders23}}]
A \emph{support data for $\scT$ with values in} $S$ is a map $\rms\colon \Ob(\scT)\to \scP(S)$ that satisfies the following properties:
\begin{enumerate}[\rm(a)]
\item
$\rms(0)=\varnothing \quad \& \quad \rms(1)=S$.
\item
$\rms(\coprod X_i)=\bigcup \rms(X_i)$.
\item
$\rms(\gS X)=\rms(X)$.
\item
$\rms(Y)\subseteq \rms(X)\cup \rms(Z)$, for any triangle $X\to Y\to Z$.
\item
$\rms(X\ot Y)\subseteq \rms(X)\cap \rms(Y)$.
\end{enumerate}
\end{defn}

\begin{defn}
A \emph{cosupport data for $\scT$ with values in} $S$ is a map $\rmc\colon \Ob(\scT)\to \scP(S)$ that satisfies the following properties:
\begin{enumerate}[\rm(a)]
\item
$\rmc(0)=\varnothing \quad \& \quad \rmc(I)=S$.
\item
$\rmc(\prod X_i)=\bigcup \rmc(X_i)$.
\item
$\rmc(\gS X)=\rmc(X)$.
\item
$\rmc(Y)\subseteq \rmc(X)\cup \rmc(Z)$, for any triangle $X\to Y\to Z$.
\item
$\rmc([X,Y])\subseteq \rmc([X,I])\cap \rmc(Y)$.
\end{enumerate}
\end{defn}

\begin{rem}
Let $\rmc\colon \Ob(\scT)\to \scP(S)$ be a cosupport data. If $[-,I]$ preserves triangles (see~\Cref{hyp:tc3}) it is a straightforward verification, using the properties of $[-,I]$, that setting $\rms(X)=\rmc([X,I])$ gives rise to a support data $\rms\colon \Ob(\scT)\to \scP(S)$.
\end{rem}

\begin{defn}
Let $\rmc\colon \Ob(\scT)\to \scP(S)$ be a cosupport data. The support data $\rms\colon \Ob(\scT)\to \scP(S)$ defined by $\rms(X)=\rmc([X,I])$ is called \emph{the support data induced by}~$\rmc$. We say that $(\rms,\rmc)$ is a \emph{support--cosupport pair}.
\end{defn}

\begin{lem}\label{lem:Gamma-supp-cosupp}
Let $\Gamma\colon S\to \Ob(\scT)$ be a map such that $\Gamma_s\coloneqq \Gamma(s)\neq 0,\, \forall s\in S$. Then the maps
\begin{align*}
\rms_\Gamma \colon \Ob(\scT) \to \scP(S),\, \rms_\Gamma(X)&=\set{s\in S}{\Gamma_s\ot X\neq 0}\\
\mathmakebox[\widthof{$\rms_\Gamma$}]{\rmc_\Gamma} \colon \Ob(\scT) \to \scP(S),\, \mathmakebox[\widthof{$\rms_\Gamma$}]{\rmc_\Gamma}(X)&=\set{s\in S}{\mathmakebox[\widthof{$\Gamma_s\ot X$}]{[\Gamma_s,X]}\neq 0}
\end{align*}
are a support and cosupport data, respectively. Moreover, $\rms_\Gamma$ is induced by $\rmc_\Gamma$.
\end{lem}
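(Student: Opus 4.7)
The plan is to verify the three claims in sequence. For the support-data axioms for $\rms_\Gamma$, the arguments are short and purely formal. Axiom (a) splits into $\Gamma_s\ot 0=0$ and $\Gamma_s\ot 1\cong \Gamma_s\neq 0$ (this is where the hypothesis $\Gamma_s\neq 0$ is used). Axioms (b), (c), (d) follow because $\Gamma_s\ot -$ is a coproduct-preserving triangulated functor, together with the elementary fact that a coproduct or product vanishes if and only if each summand or factor does, and that in a distinguished triangle the middle term can only be zero if the two outer terms are. For axiom (e), the contrapositive is clean: if $\Gamma_s\ot Y=0$, then $\Gamma_s\ot (X\ot Y)\cong X\ot (\Gamma_s\ot Y)=0$, and symmetrically for $X$.

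For $\rmc_\Gamma$, I would exploit the fact that $[\Gamma_s,-]$ is a triangulated functor that preserves products (being a right adjoint to $\Gamma_s\ot -$). Axioms (b), (c), (d) are then formal, exactly as above. Axiom (a) for $\rmc_\Gamma$ has one nontrivial part: $\rmc_\Gamma(I)=S$, i.e., $[\Gamma_s,I]\neq 0$ for every $s$. To see this, suppose for contradiction that $[\Gamma_s,I]=0$. Then for every $n\in \bbZ$, adjunction gives
\[
0=\Hom_\scT(\gS^n 1,[\Gamma_s,I])\cong \Hom_\scT(\Gamma_s,\gS^{-n}I),
\]
so the cogenerator property of $I$ forces $\Gamma_s=0$, contradicting the hypothesis.

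The most delicate point is axiom (e) for $\rmc_\Gamma$, for which I would work contrapositively and use the hom-associator $[X\ot Y,Z]\cong [X,[Y,Z]]$ of $\scT$ (the special case of \Cref{lem:hom-associativity} applied to the self-action). The identity
\[
[\Gamma_s,[X,Y]]\cong [\Gamma_s\ot X,Y]\cong [X,[\Gamma_s,Y]]
\]
immediately yields two implications. First, if $[\Gamma_s,Y]=0$, then $[\Gamma_s,[X,Y]]\cong [X,[\Gamma_s,Y]]=0$. Second, if $[\Gamma_s,[X,I]]=0$, then $[\Gamma_s\ot X,I]=0$, and the cogenerator argument from the previous paragraph (applied to $\Gamma_s\ot X$) shows $\Gamma_s\ot X=0$, whence $[\Gamma_s,[X,Y]]\cong [\Gamma_s\ot X,Y]=0$. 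Together these give $\rmc_\Gamma([X,Y])\subseteq \rmc_\Gamma([X,I])\cap \rmc_\Gamma(Y)$.

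Finally, to show that $\rms_\Gamma$ is induced by $\rmc_\Gamma$, I would chase the definitions: $\rmc_\Gamma([X,I])=\{s\in S\mid [\Gamma_s,[X,I]]\neq 0\}$, and by the hom-associator this equals $\{s\in S\mid [\Gamma_s\ot X,I]\neq 0\}$. The cogenerator argument again shows $[\Gamma_s\ot X,I]\neq 0$ iff $\Gamma_s\ot X\neq 0$, so this set is exactly $\rms_\Gamma(X)$. I expect axiom (e) of the cosupport conditions to be the main obstacle, since it is the only step requiring both the hom-associator and the cogenerator property of $I$ to be combined; everything else reduces to formal properties of triangulated functors.
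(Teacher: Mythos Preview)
Your proposal is correct and follows essentially the same approach as the paper: the paper's proof is a terse sketch invoking precisely the facts you spell out in detail, namely that $\Gamma_s\ot -$ is a coproduct-preserving triangulated functor, that $[\Gamma_s,-]$ is a product-preserving triangulated functor, and that the relation $\rms_\Gamma(X)=\rmc_\Gamma([X,I])$ comes from the isomorphism $[\Gamma_s,[X,I]]\cong [\Gamma_s\ot X,I]$ together with $I$ being a cogenerator. Your expanded treatment of axiom~(e) for $\rmc_\Gamma$ and of $\rmc_\Gamma(I)=S$ fills in exactly the details the paper leaves implicit.
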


\begin{proof}
That $\sg$ and $\cg$ are a support and cosupport data, respectively, follows from the fact that $\gG_s\ot -$ is a coproduct-preserving triangulated functor and $[\gG_s,-]$ is a product-preserving triangulated functor. Let $X\in \scT$. The claim that $\sg$ is induced by $\cg$ follows from the isomorphism $[\gG_s,[X,I]]\cong [\gG_s\ot X,I]$ and the fact that $I$ is a cogenerator of $\scT$.
\end{proof}

\begin{defn}\label{defn:good-supp}
A support--cosupport pair $(\rms_\Gamma,\rmc_\Gamma)$ is called \emph{good} if it is induced by a map $\Gamma\colon S\to \Ob(\scT)$ such that $\Gamma_s\ot \Gamma_r=0,\, \forall s\neq r$ and $\Gamma_s\ot \Gamma_s\cong \Gamma_s\neq 0,\, \forall s\in S$.
\end{defn}

\begin{rem}
An important feature of a good support--cosupport pair $(\rms_\Gamma,\rmc_\Gamma)$ is that $\rms_\Gamma(\Gamma_s)=\cg([\Gamma_s,I])=\{s\},\, \forall s\in S$.
\end{rem}

\begin{ex}
The following are good support--cosupport pairs on $\scT$:
\begin{enumerate}[\rm(a)]
\item
Assuming that the frame of smashing ideals of $\scT$ is a spatial frame, the big smashing support--cosupport pair $(\supps,\cosupps)$:
\begin{align*}
\mathmakebox[\widthof{$\cosupps(X)$}]{\supps(X)}&=\set{P\in \Spcs}{f_P\ot X\neq 0},
\\
\cosupps(X)&=\set{P\in \Spcs}{\mathmakebox[\widthof{$f_P\ot X\neq 0$}]{[f_P,X]\neq 0}}.
\end{align*}
Since $\Ker(f_P\ot -)=P$, it holds that $P\in \supps(X)$ if and only if $X\notin P$. Using the equality $P^\perp=\im(f_P\ot -)$, one can deduce that $\Ker [f_P,-]=(P^\perp)^\perp$, so $P\in \cosupps(X)$ if and only if $X\notin (P^\perp)^\perp$.
\item
Assuming further that $\Spcs$ is $T_D$, the small smashing support--cosupport pair $(\Supps,\Cosupps)$:
\begin{align*}
\mathmakebox[\widthof{$\Cosupps(X)$}]{\Supps(X)}&=\set{P\in \Spcs}{\gG_P\ot X\neq 0},
\\
\Cosupps(X)&=\set{P\in\Spcs}{\mathmakebox[\widthof{$\gG_P\ot X\neq 0$}]{[\gG_P,X]\neq 0}}.
\end{align*}
\item
Assuming that every point of $\Spc$ is visible, the Balmer--Favi support--cosupport pair $(\Supp,\Cosupp)$:
\begin{align*}
\mathmakebox[\widthof{$\Cosupp(X)$}]{\Supp(X)}&=\set{\mfp \in \Spc}{g_\mfp \ot X\neq 0},
\\
\Cosupp(X)&=\set{\mfp\in \Spc}{\mathmakebox[\widthof{$g_\mfp \ot X\neq 0$}]{[g_\mfp,X]\neq 0}}.
\end{align*}
\item If $R$ is a graded commutative noetherian ring and $\scT$ is $R$-linear, the BIK support--cosupport pair $(\supp_R,\cosupp_R)$:
\begin{align*}
\mathmakebox[\widthof{$\cosupp_R(X)$}]{\supp_R(X)}&=\set{\mfp \in \Spec(R)}{\gG_\mfp 1\ot X\neq 0},
\\
\cosupp_R(X)&=\set{\mfp\in \Spec(R)}{\mathmakebox[\widthof{$\gG_\mfp 1\ot X\neq 0$}]{[\gG_\mfp 1,X]\neq 0}}.
\end{align*}
See~\cite{BensonIyengarKrause11a,BensonIyengarKrause12}.
\end{enumerate}
\end{ex}

\begin{rem}
Suppose that $\Spcs$ is $T_D$ and let $P\in \Spcs$ with associated idempotent $\gG_P=e_\scS\ot f_P$. If $X$ is an object of $\scT$ such that $P\in \Cosupps(X)$, then $0\neq[\gG_P,X]=[e_\scS\ot f_P,X]\cong [e_\scS,[f_P,X]]$. Hence, $[f_P,X]\neq 0$. In other words, $P\in \cosupps(X)$. This shows that $\Cosupps(X) \subseteq \cosupps(X),\ \forall X\in \scT$.
\end{rem}

\begin{lem}
Assuming that $\Spcs$ is $T_D$, the small and big smashing cosupports coincide, i.e., $\Cosupps(X)=\cosupps(X),\ \forall X\in \scT$, if and only if every point of $\Spcs$ is closed, i.e., $\Spcs$ is $T_1$.
\end{lem}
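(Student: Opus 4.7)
The plan is to exploit the decomposition $\gG_P = e_\scS \ot f_P$ coming from any representation $\{P\}=U_\scS\cap V_P$ (which exists since $\Spcs$ is $T_D$); this yields the key identity $[\gG_P,X]\cong [e_\scS,[f_P,X]]$ via hom--tensor adjunction. The main obstacle will be the converse direction, where I must construct a separating object using a specializing meet-prime. Throughout, I will use the already-noted inclusion $\Cosupps(X)\subseteq \cosupps(X)$, so only the reverse inclusion is at issue.

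For the easy direction, suppose $\Spcs$ is $T_1$ and fix $P\in \Spcs$. Since $P$ is meet-prime, hence proper, $\scT\nsubseteq P$, so $U_\scT=\Spcs$. As $P$ is closed, $V_P=\{P\}$, and hence $\{P\}=U_\scT\cap V_P$. I may therefore take $\scS=\scT$ in the definition of $\gG_P$. Since $\scT^\perp=0$, the localization $f_\scT$ is zero and thus $e_\scT\cong 1$, giving $\gG_P\cong f_P$. Consequently $[\gG_P,X]\cong [f_P,X]$ for every $X$, and the two cosupports agree.

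For the converse, I argue contrapositively: if some $P\in \Spcs$ is not closed, I will exhibit $X$ with $P\in \cosupps(X)\setminus \Cosupps(X)$. First, unpacking the closed subsets $V_\scS$ shows that $Q\in \overline{\{P\}}$ iff $P\subseteq Q$, so non-closedness of $P$ produces a meet-prime $Q\supsetneq P$. Fix $\scS\in \Sm(\scT)$ with $\{P\}=U_\scS\cap V_P$; since $Q\in V_P\setminus \{P\}$, we have $\scS\subseteq Q$, giving $f_Q\in Q^\perp\subseteq \scS^\perp$ and in particular $e_\scS\ot f_Q=0$. I take $X=f_Q$. The localization unit $1\to f_Q$ is nonzero (otherwise $1$ would be a summand of $e_Q\in Q$, contradicting properness of $Q$), and as $f_Q\in P^\perp$ it factors through $f_P$, producing a nonzero morphism $f_P\to f_Q$; hence $[f_P,f_Q]\neq 0$, i.e., $P\in \cosupps(f_Q)$. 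On the other hand, for $Y\in \scS^\perp$ and any $Z\in \scS$, adjunction gives $\Hom(Z,[f_P,Y])\cong \Hom(Z\ot f_P,Y)=0$ since $Z\ot f_P\in \scS$; thus $\scS^\perp$ is closed under $[f_P,-]$, so $[f_P,f_Q]\in \scS^\perp$, and the same adjunction with $e_\scS\in \scS$ in place of $Z$ forces $[e_\scS,[f_P,f_Q]]=0$. Therefore $[\gG_P,f_Q]=0$, i.e., $P\notin \Cosupps(f_Q)$, contradicting $\Cosupps=\cosupps$.
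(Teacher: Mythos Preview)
Your proof is correct, and for the implication ``$T_1$ $\Rightarrow$ coincidence'' it is essentially the paper's argument. For the converse, however, you take a genuinely different route: the paper observes that $\Supps(-)=\Cosupps([-,I])$ and $\supps(-)=\cosupps([-,I])$, so coincidence of cosupports forces coincidence of supports, and then invokes \cite[Lemma~4.28]{Verasdanis23} to conclude $\Spcs$ is $T_1$. You instead argue contrapositively and construct an explicit witness $X=f_Q$ for a strict specialization $Q\supsetneq P$, verifying by hand that $P\in\cosupps(f_Q)\setminus\Cosupps(f_Q)$. Your approach is self-contained and more informative (it exhibits the obstruction concretely), while the paper's is shorter and exploits the support--cosupport duality to recycle an existing result. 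One small wording issue: the phrase ``the same adjunction with $e_\scS\in\scS$ in place of $Z$'' does not quite yield $[e_\scS,[f_P,f_Q]]=0$; substituting $Z=e_\scS$ only gives $\Hom(e_\scS,[f_P,f_Q])=0$. What you need (and clearly intend) is that $\scS^\perp=\Ker[e_\scS,-]$, which follows from the adjunction $\Hom(Z,[e_\scS,W])\cong\Hom(e_\scS\ot Z,W)$ together with $e_\scS\ot Z\in\scS$ for all $Z$. With that clarification the argument is complete.
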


\begin{proof}
Since $\Supps(-)=\Cosupps([-,I])$ and $\supps(-)=\cosupps([-,I])$, if the small and big smashing cosupports coincide, then so do the small and big smashing supports. By~\cite[Lemma 4.28]{Verasdanis23}, it follows that $\Spcs$ is $T_1$. Conversely, if $\Spcs$ is $T_1$ and $P\in \Spcs$, then $V_P=\{P\}$. This implies that $\gG_P=f_P$. So, $[\gG_P,X]=0$ if and only if $[f_P,X]=0$, for all $X\in \scT$. Hence, $\Cosupp=\cosupps$.
\end{proof}

\begin{rem}
Assume that $\Spcs$ is $T_D$ and consider the small smashing support--cosupport. Then $\Cosupps(1)=\set{P\in \Spcs}{[\gG_P,1]\neq 0}$. There are many cases where $\Cosupps(1)\neq \Spcs$. For instance, $\Cosupps(\bbZ_p)=\{(p)\}\neq \mathrm{Spc}^\rms(\rmD(\bbZ_p))\cong \Spec(\bbZ_p)=\{(0),(p)\}$. For more examples and results concerning the cosupport in derived categories of commutative noetherian rings, see~\cite{Thompson18}.
\end{rem}

Let $(\sg,\cg)$ be a support--cosupport pair on $\scT$ induced by a map $\Gamma\colon S\to \Ob(\scT)$ and define the maps
\begin{align*}
\sga \colon \Ob(\scK) \to \scP(S),\, \sga(A)&=\set{s\in S}{\mathmakebox[\widthof{$[\Gamma_s,A]_\ast$}]{\Gamma_s\ast A}\neq 0},\\
\mathmakebox[\widthof{$\sga$}]{\cga} \colon \Ob(\scK) \to \scP(S),\, \mathmakebox[\widthof{$\sga$}]{\cga}(A)&=\set{s\in S}{[\Gamma_s,A]_\ast \neq 0}.
\end{align*}
\begin{lem}\label{lem:induced-sc}
The maps $\sga$ and $\cga$ satisfy the following properties:
\begin{enumerate}[\rm(a)]
\item
$\sga(0)=\varnothing$.
\item
$\sga(\coprod A_i)=\bigcup \sga(A_i)$.
\item
$\sga(\gS A)=\sga(A)$.
\item
$\sga(B)\subseteq \sga(A)\cup \sga(C)$, for any triangle $A\to B\to C$ of $\scK$.
\item
$\sga(X\ast A)\subseteq \sg(X)\cap \sga(A)$.
\medbreak
\item
$\cga(0)=\varnothing$.
\item
$\cga(\prod A_i)=\bigcup \cga(A_i)$.
\item
$\cga(\gS A)=\cga(A)$.
\item
$\cga(B)\subseteq \cga(A)\cup \cga(C)$, for any triangle $A\to B\to C$ of $\scK$.
\item
$\cga([X,A]_\ast)\subseteq \cg([X,I_\scT])\cap \cga(A)$.
\end{enumerate}
\end{lem}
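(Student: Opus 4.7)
The plan is to observe that all ten properties reduce to elementary facts about the functors $\Gamma_s \ast -$ and $[\Gamma_s,-]_\ast$ together with the associators and hom-associators of the $\scT$-action.

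Properties (a)--(d) for $\sga$ are immediate from the fact that, for each $s \in S$, the functor $\Gamma_s \ast - \colon \scK \to \scK$ is a coproduct-preserving triangulated functor (which is part of the definition of a $\scT$-module). Thus $\Gamma_s \ast 0 = 0$, $\Gamma_s \ast \coprod_i A_i \cong \coprod_i (\Gamma_s \ast A_i)$, suspensions are preserved, and triangles are sent to triangles. Dually, properties (f)--(i) for $\cga$ follow because $[\Gamma_s, -]_\ast$ is a product-preserving triangulated functor, being a right adjoint by construction.

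For property (e), suppose $s \in \sga(X \ast A)$, i.e.\ $\Gamma_s \ast (X \ast A) \neq 0$. Using the associator and the symmetry of $\scT$, I rewrite this object in two ways:
\[
\Gamma_s \ast (X \ast A) \cong (\Gamma_s \ot X) \ast A \cong (X \ot \Gamma_s) \ast A \cong X \ast (\Gamma_s \ast A).
\]
From the middle expression, $(\Gamma_s \ot X) \ast A \neq 0$ forces $\Gamma_s \ot X \neq 0$, so $s \in \sg(X)$. From the last expression, $X \ast (\Gamma_s \ast A) \neq 0$ forces $\Gamma_s \ast A \neq 0$, so $s \in \sga(A)$.

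For property (j), I proceed dually using~\Cref{lem:hom-associativity}. If $s \in \cga([X,A]_\ast)$, then
\[
[\Gamma_s,[X,A]_\ast]_\ast \cong [\Gamma_s \ot X, A]_\ast \cong [X \ot \Gamma_s, A]_\ast \cong [X,[\Gamma_s,A]_\ast]_\ast
\]
is nonzero. The last isomorphism yields $[\Gamma_s,A]_\ast \neq 0$, so $s \in \cga(A)$. The middle expression $[\Gamma_s \ot X, A]_\ast \neq 0$ forces $\Gamma_s \ot X \neq 0$, and since $I_\scT$ is a cogenerator of $\scT$, this gives $[\Gamma_s \ot X, I_\scT] \neq 0$; by hom-associativity in $\scT$, $[\Gamma_s,[X,I_\scT]] \cong [\Gamma_s \ot X, I_\scT] \neq 0$, so $s \in \cg([X,I_\scT])$. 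There is no genuine obstacle here: the only mildly subtle step is recognizing that the cogenerator property of $I_\scT$ is what converts nonvanishing of $\Gamma_s \ot X$ into nonvanishing of $[\Gamma_s,[X,I_\scT]]$, which is precisely the content of the last clause in (j).
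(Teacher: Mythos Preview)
Your proof is correct and takes essentially the same approach as the paper: the paper's proof is a two-line sketch that points back to the argument of \Cref{lem:Gamma-supp-cosupp} (i.e., that $\Gamma_s\ast-$ is a coproduct-preserving triangulated functor and $[\Gamma_s,-]_\ast$ is a product-preserving triangulated functor) and invokes \Cref{lem:hom-associativity} for property~(j), which is exactly what you unpack in detail. Your explicit use of the cogenerator property of $I_\scT$ to pass from $\Gamma_s\ot X\neq 0$ to $[\Gamma_s,[X,I_\scT]]\neq 0$ is precisely the step hidden in the paper's identification $\sg(X)=\cg([X,I_\scT])$.
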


\begin{proof}
The argument is essentially the same as the one given in~\Cref{lem:Gamma-supp-cosupp}. The property $\cga([X,A]_\ast)\subseteq \cg([X,I_\scT])\cap \cga(A)$ follows from~\Cref{lem:hom-associativity}.
\end{proof}

\subsection{The (co)local-to-global principle and (co)minimality}
We define two pairs of inclusion-preserving maps
\[
\begin{tikzcd} \scP(S)  \rar[shift left,"\tausga"] & \lar[shift left,"\sigmasga"] \Loca(\scK) \end{tikzcd}
\quad \& \quad
\begin{tikzcd} \scP(S)  \rar[shift left,"\taucga"] & \lar[shift left,"\sigmacga"] \Coloch(\scK) \end{tikzcd}
\]
by the formulas
\[
\tausga(W)=\set{A\in \scK}{\sga(A)\subseteq W} \quad \& \quad \sigmasga(\scL)=\bigcup_{A\in \scL}\sga(A),
\]
\[
\taucga(W)=\set{A\in \scK}{\cga(A)\subseteq W} \quad \& \quad \sigmacga(\scC)=\bigcup_{A\in \scC}\cga(A).
\]
It is clear from the properties of $\sga$ and $\cga$ that the maps $\tausga,\sigmasga,\taucga,\sigmacga$ are well-defined. Moreover, $\im \sigmasga \subseteq \scP(\sigmasga(\scK))$ and $\im \sigmacga \subseteq \scP(\sigmacga(\scK))$. In fact, $\sigmasga(\scK)=\sigmacga(\scK)=\cga(I_\scK)$. The first equality follows from the adjunction $\gG_s \ast - \dashv [\gG_s,-]_\ast$. The second equality is a special case of~\Cref{lem:cosupport-of-collection} using the fact that $\scK=\coloch(I_\scK)$. If $\scK$ is a conservative $\scT$-module, then $\gG_s \ast -\neq 0,\, \forall s\in S$. Hence, in this case, $\sigmasga(\scK)=S$.
For $\scK=\scT$ and $\ast=\ot$, we obtain the maps $\tausg,\sigmasg,\taucg,\sigmacg$.

\begin{defn}\label{defn:strat--costrat}%
$\phantom{}$
\begin{enumerate}[\rm(a)]
\item
$\scK$ is \emph{stratified} by $\gG$ if $\tausga$ and $\sigmasga$, between $\scP(\cga(I_\scK))$ and $\Loca(\scK)$, are mutually inverse bijections.
\item
$\scK$ is \emph{costratified} by $\gG$ if $\taucga$ and $\sigmacga$, between $\scP(\cga(I_\scK))$ and $\Coloch(\scK)$, are mutually inverse bijections.
\end{enumerate}
\end{defn}

\begin{rem}
Since we will always work with a fixed support--cosupport pair induced by a map $\gG\colon S\to \Ob(\scT)$, we will omit the reference to $\gG$ in~\Cref{defn:strat--costrat} and say ``$\scK$ is stratified'' and ``$\scK$ is costratified'', respectively. We will mention explicit support--cosupport pairs where appropriate.
\end{rem}

\begin{ex}[{\cite{Krause05,Stevenson14b}}]
Let $R$ be a commutative noetherian ring. Then the singularity category $\rmS(R)=\rmK_{\mathrm{ac}}(\Inj R)$, which is the homotopy category of acyclic complexes of injective $R$-modules, is a compactly generated triangulated category and there is an action $ \ast  \colon \rmD(R) \times \rmS(R) \to \rmS(R)$: If $X\in \rmD(R)$ and $A\in \rmS(R)$, then $X\ast A=\wt{X}\ot_R A$, where $\wt{X}$ is a K-flat resolution of $X$. In this instance, $\sigmasga(\rmS(R))$ is $\mathrm{Sing}(R)$ the singular locus of $R$. If $R$ is locally a hypersurface, then $\rmS(R)$ is stratified by the action of $\rmD(R)$: There is a bijective correspondence between localizing subcategories of $\rmS(R)$ and subsets of $\mathrm{Sing}(R)$; see~\cite[Theorem 6.13]{Stevenson14b}.
\end{ex}

\begin{defn}[{For $(\rma)$, see~\cite[Definition 6.1]{Stevenson13}}]
$\phantom{}$
\begin{enumerate}[\rm(a)]
\item
$\scK$ satisfies the \emph{local-to-global principle} if
\[
\loca(A)=\loca\parens{\Gamma_s\ast A}{s\in S},\, \forall A\in \scK.
\]
\item
$\scK$ satisfies \emph{minimality} if, for all $s\in S$, $\loca\parens{\Gamma_s\ast A}{A\in \scK}$ is minimal in $\Loca(\scK)$ in the sense that it does not contain any non-zero proper localizing submodule of $\scK$.
\end{enumerate}
\begin{enumerate}[\rm(a)]
\item[$(\mathrm{i})$]
$\scK$ satisfies the \emph{colocal-to-global principle} if
\[
\coloch(A)=\coloch\parens{[\Gamma_s,A]_\ast}{s\in S},\, \forall A\in \scK.
\]
\item[$(\mathrm{ii})$]
$\scK$ satisfies \emph{cominimality} if, for all $s\in S$, $\coloch([\Gamma_s,I_\scK]_\ast)$ is minimal in $\Coloch(\scK)$ in the sense that it does not contain any non-zero proper colocalizing $\shom$-submodule of $\scK$.
\end{enumerate}
\end{defn}

\begin{rem}\label{rem:comin-expanded}
Let $X\in \scT$. Since $\scK=\coloc(I_\scK)=\coloch(I_\scK)$, by~\Cref{lem:images-of-ideals} for the functor $[X,-]_\ast$, we have $\coloch([X,I_\scK]_\ast)=\coloch\parens{[X,A]_\ast}{A\in \scK}$. In particular, $\coloch([\Gamma_s,I_\scK]_\ast)=\coloch\parens{[\Gamma_s,A]_\ast}{A\in \scK},\ \forall s\in S$.
\end{rem}

\begin{rem}\label{rem:coloc-optimal}
It is clear from the definition of $\sga$ and $\cga$ that
\begin{align*}
\loca\parens{\Gamma_s\ast A}{s\in S}&=\loca\parens{\Gamma_s\ast A}{s\in \sga(A)},\\
\coloch\parens{[\Gamma_s,A]_\ast}{s\in S}&=\coloch\parens{[\Gamma_s,A]_\ast}{s\in \cga(A)}.
\end{align*}
In addition, if $\scK$ satisfies the  local-to-global (resp.~colocal-to-global) principle, then $\sga$ (resp.~$\cga$) detects vanishing, i.e., $\sga(A)=\varnothing \Rightarrow A=0$ and similarly for $\cga$. For the case $\scK=\scT$ and $\ast=\ot$, it holds that codetection implies detection, since $\varnothing=\sg(X)=\cg([X,I])$ implies $[X,I]=0$, so $X=0$.
\end{rem}

For the rest of the section, fix a good support--cosupport pair $(\sg,\cg)$ on $\scT$.

\begin{lem}\label{lem:cosupport-of-collection}
Let $\scA$ be a collection of objects of $\scK$. Then we have the following equalities of subsets of $S$:
\begin{enumerate}[\rm(a)]
\item
$\sigmasga(\loca(\scA))=\bigcup_{A\in \scA}\sga(A)$.
\item
$\sigmacga(\coloch(\scA))=\bigcup_{A\in \scA}\cga(A)$.
\end{enumerate}
\end{lem}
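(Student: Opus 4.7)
The plan is to prove the two equalities by a single closure argument: in each case, the $\supseteq$ containment is automatic from the inclusion $\scA \subseteq \loca(\scA)$ (respectively $\scA \subseteq \coloch(\scA)$), and the $\subseteq$ containment follows by showing that a certain preimage under $\tausga$ (respectively $\taucga$) is already a localizing submodule (respectively colocalizing $\shom$-submodule) of $\scK$.

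For part (a), set $W = \bigcup_{A \in \scA}\sga(A)$. The $\supseteq$ direction is immediate: $\scA \subseteq \loca(\scA)$ gives $W \subseteq \sigmasga(\loca(\scA))$. For the reverse, I would verify that $\tausga(W) = \{B \in \scK \mid \sga(B) \subseteq W\}$ is a localizing submodule of $\scK$, using properties (a)--(e) of $\sga$ from \Cref{lem:induced-sc}: closure under coproducts follows from $\sga(\coprod B_i) = \bigcup \sga(B_i)$, closure under triangles follows from $\sga(B) \subseteq \sga(A) \cup \sga(C)$, closure under suspension from $\sga(\gS B) = \sga(B)$, and the submodule property from $\sga(X \ast B) \subseteq \sg(X) \cap \sga(B) \subseteq \sga(B)$. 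Since $\tausga(W)$ contains $\scA$, it contains $\loca(\scA)$, and therefore $\sga(B) \subseteq W$ for every $B \in \loca(\scA)$, which gives $\sigmasga(\loca(\scA)) \subseteq W$.

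Part (b) is the exact dual. Setting $W' = \bigcup_{A \in \scA}\cga(A)$, the $\supseteq$ direction is immediate from $\scA \subseteq \coloch(\scA)$. For the reverse, I would check that $\taucga(W')$ is a colocalizing $\shom$-submodule: closure under products, triangles, and suspensions follows from the corresponding properties of $\cga$, and the $\shom$-submodule condition follows from $\cga([X,B]_\ast) \subseteq \cg([X,I_\scT]) \cap \cga(B) \subseteq \cga(B)$. Since $\taucga(W') \supseteq \scA$, it contains $\coloch(\scA)$, finishing the argument.

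There is no real obstacle here, since the lemma is essentially a bookkeeping consequence of \Cref{lem:induced-sc}; the only thing one must be slightly careful about is that the submodule/$\shom$-submodule properties of $\tausga(W)$ and $\taucga(W')$ require precisely the mixed-variable estimates in items (e) of that lemma, which is where the hypothesis that $(\sg,\cg)$ is a support--cosupport pair on $\scT$ enters.
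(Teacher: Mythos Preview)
Your proof is correct and is essentially the same closure argument as the paper's, just packaged globally rather than pointwise: the paper fixes a point $s\in S$ and uses that $\Ker[\Gamma_s,-]_\ast$ is a colocalizing $\shom$-submodule to get $\scA\subseteq\Ker[\Gamma_s,-]_\ast\Leftrightarrow\coloch(\scA)\subseteq\Ker[\Gamma_s,-]_\ast$, whereas you use that $\taucga(W')=\bigcap_{s\notin W'}\Ker[\Gamma_s,-]_\ast$ is a colocalizing $\shom$-submodule containing $\scA$. One small remark: your verification that $\tausga(W)$ and $\taucga(W')$ are (co)localizing submodules is redundant, since the paper already records this when it says the maps $\tausga,\taucga$ are well-defined, but it does no harm.
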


\begin{proof}
We will prove the case of $\sigmacga$. Let $s$ be an element of $S$. Then
\begin{align*}
s\notin \bigcup_{A\in \scA}\cga(A) &\Leftrightarrow \scA \subseteq \Ker [\Gamma_s,-]_\ast\\
&\Leftrightarrow \coloch(\scA)\subseteq \Ker [\Gamma_s,-]_\ast\\
&\Leftrightarrow s\notin \bigcup_{A\in \coloch(\scA)}\cga(A)=\sigmacga(\coloch(\scA)).\qedhere
\end{align*}
\end{proof}

\begin{rem}\label{rem:key-prop}
If $(\sg,\cg)$ is a good support-cosupport pair on $\scT$, then it holds that $\cga([\Gamma_s,A]_\ast)\subseteq \{s\}$. Hence, if $[\Gamma_s,A]_\ast \neq 0$ (i.e., $s\in \cga(A)$) then $\cga([\Gamma_s,A]_\ast)=\{s\}$. In particular, if $s\in \cga(I_\scK)$, then $\cga([\Gamma_s,I_\scK]_\ast)=\{s\}$.
\end{rem}

\begin{lem}\label{lem:tau-inj-sigma-surj}
It holds that $\sigmasga\circ \tausga=\Id$ and $\sigmacga\circ \taucga=\Id$, where both composites are restricted to $\scP(\cga(I_\scK))$. In particular, the respective restrictions of $\tausga$ and $\taucga$ are injective, while $\sigmasga$ and $\sigmacga$ are surjective.
\end{lem}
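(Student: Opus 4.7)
The plan is to prove each equality by showing two inclusions on $\scP(\cga(I_\scK))$. The inclusion $\sigmacga(\taucga(W)) \subseteq W$ is immediate from the definitions: every $A \in \taucga(W)$ satisfies $\cga(A) \subseteq W$, so the union defining $\sigmacga(\taucga(W))$ lies in $W$. The same reasoning handles $\sigmasga(\tausga(W)) \subseteq W$. The substantive content lies in the reverse inclusion, which requires, for each $s \in W \subseteq \cga(I_\scK)$, the construction of a witness object in $\scK$ whose cosupport (resp.\ support) equals $\{s\}$.

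For the cosupport equality, I would take the witness to be $A_s := [\Gamma_s, I_\scK]_\ast$. Since $s \in \cga(I_\scK)$, this object is nonzero. By \Cref{rem:key-prop}, $\cga([\Gamma_s, I_\scK]_\ast) \subseteq \{s\}$. Applying the hom-associator of \Cref{lem:hom-associativity} together with the idempotence $\Gamma_s \ot \Gamma_s \cong \Gamma_s$ from \Cref{defn:good-supp} gives $[\Gamma_s, A_s]_\ast \cong [\Gamma_s \ot \Gamma_s, I_\scK]_\ast \cong A_s \neq 0$, whence $s \in \cga(A_s)$. Thus $\cga(A_s) = \{s\} \subseteq W$, so $A_s \in \taucga(W)$ and $s \in \cga(A_s) \subseteq \sigmacga(\taucga(W))$.

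For the support equality, I would use the identification $\cga(I_\scK) = \sigmasga(\scK)$ established in the preceding discussion (via the adjunction $\Gamma_s \ast - \dashv [\Gamma_s,-]_\ast$) to pick, for each $s \in W$, some $B \in \scK$ with $\Gamma_s \ast B \neq 0$, and take $A_s := \Gamma_s \ast B$. By \Cref{lem:induced-sc}, $\sga(A_s) \subseteq \sg(\Gamma_s) \cap \sga(B) \subseteq \{s\}$, using that $\sg(\Gamma_s) = \{s\}$. The associator together with $\Gamma_s \ot \Gamma_s \cong \Gamma_s$ yields $\Gamma_s \ast A_s \cong \Gamma_s \ast B \neq 0$, so $s \in \sga(A_s)$, and consequently $\sga(A_s) = \{s\} \subseteq W$, placing $A_s \in \tausga(W)$ and $s \in \sigmasga(\tausga(W))$.

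The ``in particular'' clauses are then formal: a left inverse forces $\tausga$ and $\taucga$ to be injective and forces $\sigmasga, \sigmacga$ to map onto $\scP(\cga(I_\scK))$. The argument is essentially elementary once the correct witnesses are spotted; the only step requiring any real care is the verification that $[\Gamma_s, I_\scK]_\ast$ and $\Gamma_s \ast B$ have the advertised singleton (co)support, and this is precisely where the good pair hypothesis $\Gamma_s \ot \Gamma_s \cong \Gamma_s$ does the work (in combination with the hom-associator and associator). No deeper obstacle is present.
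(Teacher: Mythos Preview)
Your proof is correct and follows essentially the same approach as the paper. The paper proves only the cosupport case (leaving the support case implicit), and for the reverse inclusion it simply invokes \Cref{rem:key-prop} to get $\cga([\Gamma_s,I_\scK]_\ast)=\{s\}$ for $s\in W\subseteq \cga(I_\scK)$; you unpack this remark via the hom-associator and the idempotence $\Gamma_s\ot\Gamma_s\cong\Gamma_s$, and you also supply an explicit witness argument for the support case, but the underlying idea is identical.
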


\begin{proof}
We will prove that $\sigmacga\circ \taucga=\Id$ (restricted to $\scP(\cga(I_\scK)$)). Let $W$ be a subset of $\cga(I_\scK)$. Clearly $(\sigmacga \circ \taucga)(W)\subseteq W$, since $(\sigmacga\circ \taucga)(W)=\bigcup_{\cga(A)\subseteq W}\cga(A)$. Let $s$ be an element of $W$. Then $s\in \cga([\gG_s,I_\scK]_\ast)=\{s\}\subseteq W$. Therefore, $s\in (\sigmacga \circ \taucga)(W)$, completing the proof.
\end{proof}

\begin{thm}\label{thm:costrat-coltg-comin}
Let $(\rms_\Gamma,\rmc_\Gamma)$ be a good support-cosupport pair on $\scT$.
\begin{enumerate}[\rm(a)]
\item
\label{thm:ccc-a}
$\scK$ is stratified with respect to $(\sg,\cg)$ if and only if $\scK$ satisfies the local-to-global principle and minimality.
\item
\label{thm:ccc-b}
$\scK$ is costratified with respect to $(\sg,\cg)$ if and only if $\scK$ satisfies the colocal-to-global principle and cominimality.
\end{enumerate}
\end{thm}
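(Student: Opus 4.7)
The plan is to prove both parts with the same structure; since the costratification machinery is the formal dual of stratification, I would write out part~(b) in detail and indicate at the end that part~(a) follows by replacing $[\Gamma_s,-]_\ast$ with $\Gamma_s\ast-$ and colocalizing $\shom$-submodules with localizing submodules. By~\Cref{lem:tau-inj-sigma-surj} one already has $\sigmacga\circ\taucga=\Id$ on $\scP(\cga(I_\scK))$, so costratification reduces to the identity $\taucga\circ\sigmacga=\Id$ on $\Coloch(\scK)$. Unpacking: for $\scC\in\Coloch(\scK)$ the inclusion $\scC\subseteq\taucga(\sigmacga(\scC))$ is immediate, and the reverse inclusion is exactly the recovery statement that $\cga(A)\subseteq\sigmacga(\scC)$ implies $A\in\scC$. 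So everything reduces to proving this recovery statement, and my whole strategy is to show it is equivalent to the conjunction of the colocal-to-global principle and cominimality.

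For the forward direction, assume costratification. To obtain the colocal-to-global principle for a given $A\in\scK$, I would set $\scC=\coloch\{[\Gamma_s,A]_\ast:s\in S\}$. By~\Cref{lem:cosupport-of-collection} combined with the goodness-driven~\Cref{rem:key-prop} (which gives $\cga([\Gamma_s,A]_\ast)=\{s\}$ when $s\in\cga(A)$ and $\varnothing$ otherwise), we have $\sigmacga(\scC)=\cga(A)$. Applying $\taucga$ and using costratification gives $\scC=\taucga(\cga(A))\ni A$, yielding $\coloch(A)\subseteq\scC$; the reverse inclusion is automatic since $\coloch(A)$ is closed under $[\Gamma_s,-]_\ast$. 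For cominimality, first observe that costratification forces $\cga$ to detect vanishing (evaluating the bijections at the empty set). Given any non-zero colocalizing $\shom$-submodule $\scC'\subseteq\coloch([\Gamma_s,I_\scK]_\ast)$, detection of vanishing gives $\sigmacga(\scC')\neq\varnothing$, while containment and~\Cref{rem:key-prop} give $\sigmacga(\scC')\subseteq\{s\}=\sigmacga(\coloch([\Gamma_s,I_\scK]_\ast))$; injectivity of $\sigmacga$ then forces equality.

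For the converse, assume the colocal-to-global principle and cominimality. Given $\scC\in\Coloch(\scK)$ and $A$ with $\cga(A)\subseteq\sigmacga(\scC)$, I would show $A\in\scC$ as follows. By the colocal-to-global principle $A\in\coloch\{[\Gamma_s,A]_\ast:s\in S\}$, so it suffices to prove $[\Gamma_s,A]_\ast\in\scC$ for every $s$. When $s\notin\cga(A)$ this is trivial; when $s\in\cga(A)$, pick $B\in\scC$ with $s\in\cga(B)$ (possible since $s\in\sigmacga(\scC)$). Then $[\Gamma_s,B]_\ast\in\scC$ because $\scC$ is a colocalizing $\shom$-submodule, and both $\coloch([\Gamma_s,A]_\ast)$ and $\coloch([\Gamma_s,B]_\ast)$ are non-zero colocalizing $\shom$-submodules of $\coloch([\Gamma_s,I_\scK]_\ast)$ by~\Cref{rem:comin-expanded}. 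Cominimality collapses them to a single subcategory, so $[\Gamma_s,A]_\ast\in\coloch([\Gamma_s,B]_\ast)\subseteq\scC$. The main obstacle is precisely this matching step: translating the inclusion $\cga(A)\subseteq\sigmacga(\scC)$ into actual membership $[\Gamma_s,A]_\ast\in\scC$. Goodness of the support--cosupport pair is what makes $[\Gamma_s,-]_\ast$ carve out a canonical minimal piece of $\Coloch(\scK)$, and cominimality is exactly the axiom that forces any two objects which live in that minimal piece to generate the same colocalizing $\shom$-submodule.
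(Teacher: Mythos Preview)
Your proposal is correct and follows essentially the same approach as the paper's proof: both reduce costratification to the identity $\taucga\circ\sigmacga=\Id$ via~\Cref{lem:tau-inj-sigma-surj}, derive the colocal-to-global principle and cominimality from injectivity of $\sigmacga$ together with~\Cref{lem:cosupport-of-collection} and~\Cref{rem:key-prop}, and prove the converse by using the colocal-to-global principle to decompose $A$ into pieces $[\Gamma_s,A]_\ast$ and then cominimality (with a witness $B\in\scC$ having $s\in\cga(B)$) to force each piece into $\scC$. The only differences are cosmetic---you phrase the forward direction via $\taucga$ applied to $\cga(A)$ rather than directly via injectivity of $\sigmacga$, and your converse bypasses the intermediate containment in $\coloch\{[\Gamma_s,I_\scK]_\ast:s\in\sigmacga(\scC)\}$---but the substance is the same.
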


\begin{proof}
We will only prove $(\rmb)$, since $(\rma)$ is proved analogously. Suppose that $\scK$ is costratified. Then $\sigmacga$ is injective. Let $A$ be an object of $\scK$. Then
\begin{align*}
\sigmacga(\coloch\parens{[\gG_s,A]_\ast}{s\in \cga(A)})&=\bigcup_{s\in \cga(A)}\cga([\gG_s,A]_\ast)\\
&=\bigcup_{s\in \cga(A)}\{s\}\\
&=\cga(A)\\
&=\sigmacga(\coloch(A)),
\end{align*}
where the first and last equalities are due to~\Cref{lem:cosupport-of-collection}. Since $\sigmacga$ is injective, it follows that $\coloch\parens{[\gG_s,A]_\ast}{s\in \cga(A)}=\coloch(A)$. Thus, $\scK$ satisfies the colocal-to-global principle. In particular, $\cga$ detects vanishing.

Let $s$ be an element of $\cga(I_\scK)$ and $A$ a non-zero object in $\coloch([\gG_s,I_\scK]_\ast)$. Then $\varnothing \neq \cga(A) \subseteq \cga([\gG_s,I_\scK]_\ast)=\{s\}$. Therefore, $\cga(A)=\cga([\gG_s,I_\scK]_\ast)$. Since $\sigmacga$ is injective, $\coloch(A)=\coloch([\gG_s,I_\scK]_\ast)$. Hence, $\coloch([\gG_s,I_\scK]_\ast)$ is minimal.

Suppose that $\scK$ satisfies the colocal-to-global principle and cominimality. Let $\scC\in \Coloch(\scK)$. Clearly, $\scC\subseteq (\taucga \circ \sigmacga)(\scC)$. Let $A\in (\taucga \circ \sigmacga)(\scC)$, i.e., $\cga(A)\subseteq \sigmacga(\scC)$. Then
\begin{align*}
\coloch(A)&=\coloch\parens{[\gG_s,A]_\ast}{s\in \cga(A)}\\
&\subseteq\coloch\parens{[\gG_s,I_\scK]_\ast}{s\in \cga(A)}\\
&\subseteq \coloch\parens{[\gG_s,I_\scK]_\ast}{s\in \sigmacga(\scC)}\\
&\subseteq \scC.
\end{align*}
The first equality is due to the colocal-to-global principle. The first containment relation follows from~\Cref{rem:comin-expanded}, while the second containment relation is clear. For the third containment, if $s\in \sigmacga(\scC)$, then there exists an object $B\in \scC$ such that $[\gG_s,B]_\ast\neq 0$. Since $[\gG_s,B]_\ast\in \coloch([\gG_s,I_\scK]_\ast)$ and the latter is minimal, it follows that $\coloch([\gG_s,I_\scK]_\ast)=\coloch([\gG_s,B]_\ast)\subseteq \scC$. We infer that $A\in \scC$, proving that $(\taucga \circ \sigmacga)(\scC)=\scC$. So, $\sigmacga$ is injective and thus, $\sigmacga$ is bijective. This shows that $\scK$ is costratified.
\end{proof}

\begin{rem}\label{rem:replacing-good}
\Cref{thm:costrat-coltg-comin}~\eqref{thm:ccc-b} could be stated slightly more generally, replacing a good support--cosupport pair $(\sg,\cg)$, in the sense of~\Cref{defn:good-supp}, with one that satisfies the property stated in~\Cref{rem:key-prop}, i.e., if $A$ is an object of $\scK$ such that $[\gG_s,A]_\ast\neq 0$, then $\cga([\gG_s,A]_\ast)=\{s\}$. Similarly, the analogous property for~\Cref{thm:costrat-coltg-comin}~\eqref{thm:ccc-a} is: if $A$ is an object of $\scK$ such that $\gG_s \ast A\neq 0$, then $\sga(\gG_s \ast A)=\{s\}$. This observation will be useful in~\Cref{sec:derived}, where we consider the support--cosupport for objects of the derived category of a commutative noetherian ring defined by the residue fields.
\end{rem}

\subsection{Local-to-global implies colocal-to-global}
Let $(\sg,\cg)$ be a (not necessarily good) support--cosupport pair on $\scT$. For our next result, we need an additional assumption.
\begin{hyp}\label{hyp:tc3}
We further assume that the relative internal-hom of $\scK$ is a triangulated functor in the first variable, i.e., $[-,A]_\ast\colon \scT^\rmop \to \scK$ preserves triangles, for all $A\in \scK$. This is true, e.g., if $\scK$ satisfies a formulation of May's TC3 axiom~(\cite{May01}) replacing the tensor product of $\scT$ with the action of $\scT$ on $\scK$. The proof of~\cite[Theorem C.1]{Murfet07} goes through verbatim. Our assumption is satisfied by all known examples.
\end{hyp}

\begin{lem}\label{lem:generators-ideals-submodules}
Suppose that $\scT=\loct(G)$. Then the following hold:
\begin{enumerate}[\rm(a)]
\item
$\loca(A)=\loca(G\ast A),\, \forall A\in \scK$.
\item
$\coloch(A)=\coloch([G,A]_\ast),\, \forall A\in \scK$ (under~\Cref{hyp:tc3}).
\end{enumerate}
\end{lem}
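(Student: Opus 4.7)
The plan is to handle both parts by the same method: show each side contains the other, where the nontrivial inclusion is proved by a ``Bousfield-style'' argument, producing a localizing tensor-ideal of $\scT$ that contains $G$ and hence is all of $\scT$.

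For part (a), the inclusion $\loca(G\ast A)\subseteq\loca(A)$ is immediate: since $\loca(A)$ is a localizing submodule and $A\in\loca(A)$, we have $X\ast A\in\loca(A)$ for all $X\in\scT$; in particular $G\ast A\in\loca(A)$. For the reverse inclusion, I would introduce
\[
\scX=\set{X\in\scT}{X\ast A\in\loca(G\ast A)}.
\]
Since $-\ast A\colon\scT\to\scK$ is a coproduct-preserving triangulated functor, $\scX$ is a localizing subcategory of $\scT$. The associator isomorphism $(Y\ot X)\ast A\cong Y\ast(X\ast A)$ together with the fact that $\loca(G\ast A)$ is closed under $Y\ast-$ shows that $\scX$ is a tensor-ideal. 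Because $G\in\scX$, we obtain $\scT=\loct(G)\subseteq\scX$, so $1\in\scX$ and hence $A\cong 1\ast A\in\loca(G\ast A)$.

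For part (b), the inclusion $\coloch([G,A]_\ast)\subseteq\coloch(A)$ follows because $\coloch(A)$ is a colocalizing $\shom$-submodule and contains $A$. For the other direction, define
\[
\scY=\set{X\in\scT}{[X,A]_\ast\in\coloch([G,A]_\ast)}.
\]
Under \Cref{hyp:tc3}, $[-,A]_\ast\colon\scT^\rmop\to\scK$ is triangulated, and it automatically sends coproducts to products by adjunction. Combined with the fact that colocalizing subcategories are closed under shifts, triangles, products, and summands, this makes $\scY$ a localizing subcategory of $\scT$. To see that $\scY$ is a tensor-ideal, I would use the hom-associator of \Cref{lem:hom-associativity}: for $X\in\scY$ and $Y\in\scT$,
\[
[Y\ot X,A]_\ast\cong [Y,[X,A]_\ast]_\ast,
\]
and the right side lies in $\coloch([G,A]_\ast)$ because $[X,A]_\ast$ does and $\coloch([G,A]_\ast)$ is an $\shom$-submodule. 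Thus $\scY$ is a localizing tensor-ideal containing $G$, so $\scY=\loct(G)=\scT$. Taking $X=1$ and using the hom-unitor $[1,A]_\ast\cong A$, we conclude $A\in\coloch([G,A]_\ast)$.

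The two arguments are essentially dual; the only subtle point is that $\scY$ in part~(b) must be checked to be a tensor-ideal in $\scT$ (in its covariant variable) rather than in $\scT^\rmop$, which is why the hom-associator from \Cref{lem:hom-associativity} is needed. No genuine obstacle arises beyond bookkeeping: the role of \Cref{hyp:tc3} is precisely to ensure that $[-,A]_\ast$ is triangulated contravariantly, without which $\scY$ would not obviously be closed under triangles.
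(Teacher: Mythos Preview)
Your proof is correct and follows essentially the same approach as the paper: the paper packages the ``Bousfield-style'' argument you spell out into \Cref{lem:images-of-ideals} (applied to $-\ast A$ for (a) and to $[-,A]_\ast$ for (b)) and then observes $1\in\loct(G)$, but unwinding that lemma gives exactly your subcategories $\scX$ and $\scY$.
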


\begin{proof}
We will prove $(\rmb)$. The inclusion $\coloch([G,A]_\ast)\subseteq \coloch(A)$ is clear. Since $\scT=\loct(G)$, it holds that $1\in \loct(G)$. By~\Cref{lem:images-of-ideals} for the functor $[-,A]_\ast$, it follows that $A\cong[1,A]_\ast\in \coloch([G,A]_\ast)$. Therefore, $\coloch(A)\subseteq \coloch([G,A]_\ast)$. The proof of $(\rma)$ is analogous.
\end{proof}

\begin{rem}\label{rem:gen-t-gen-k}
An easy generalization of~\Cref{lem:generators-ideals-submodules} is the following: If $\scT=\loct(\scG)$, for a collection of objects $\scG$, then $\forall A\in \scK\colon \loca(A)=\loca\parens{G\ast A}{G\in \scG}$ and $\coloch(A)=\coloch\parens{[G,A]_\ast}{G\in \scG}$.
\end{rem}

\begin{prop}[{See also~\cite[Proposition 6.8]{Stevenson13}}]\label{prop:ltg-implies-ltg-coltg}
If $\scT$ satisfies the local-to-global principle, then $\scK$ satisfies the local-to-global principle and (under~\Cref{hyp:tc3}) the colocal-to-global principle.
\end{prop}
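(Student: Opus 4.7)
The plan is to reduce both principles for $\scK$ to the observation that once the family $\{\Gamma_s\}_{s\in S}$ generates $\scT$ as a localizing tensor-ideal, Remark~\ref{rem:gen-t-gen-k} immediately transfers the statement to any $\scT$-module. The whole argument is an application of the local-to-global principle of $\scT$ to a single well-chosen object, namely the tensor unit.

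First, I would apply the local-to-global principle of $\scT$ to $1\in \scT$. Since $\Gamma_s\otimes 1\cong \Gamma_s$, this gives
\[
\loct(1)=\loct\parens{\Gamma_s\ot 1}{s\in S}=\loct\parens{\Gamma_s}{s\in S}.
\]
But $\loct(1)=\scT$, because any $X\in\scT$ satisfies $X\cong X\ot 1\in \loct(1)$. Therefore $\scT=\loct\parens{\Gamma_s}{s\in S}$.

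Having found a generating collection of $\scT$ of the desired form, I would invoke Remark~\ref{rem:gen-t-gen-k}. Applied to $\scG=\{\Gamma_s\}_{s\in S}$ and any $A\in \scK$, it yields
\[
\loca(A)=\loca\parens{\Gamma_s\ast A}{s\in S},
\]
which is precisely the local-to-global principle for $\scK$. Under~\Cref{hyp:tc3}, the same remark produces
\[
\coloch(A)=\coloch\parens{[\Gamma_s,A]_\ast}{s\in S},
\]
which is the colocal-to-global principle for $\scK$.

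There is no real obstacle here once one realizes that \Cref{lem:generators-ideals-submodules}, and its multi-object generalization in \Cref{rem:gen-t-gen-k}, does all the work; the only role of~\Cref{hyp:tc3} is to guarantee that $[-,A]_\ast\colon \scT^\rmop\to \scK$ is triangulated, which is what allows the colocalizing submodule $\coloch([G,A]_\ast)$ to be well-behaved under triangle arguments in the proof of \Cref{lem:generators-ideals-submodules}~(b). Hence the colocal-to-global conclusion is only asserted under~\Cref{hyp:tc3}, while the localizing conclusion is unconditional.
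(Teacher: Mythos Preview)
Your proof is correct and follows essentially the same approach as the paper: deduce $\scT=\loct\parens{\Gamma_s}{s\in S}$ from the local-to-global principle for $\scT$, then invoke \Cref{rem:gen-t-gen-k}. You simply spell out the first step in slightly more detail (applying the principle to the unit) than the paper does.
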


\begin{proof}
Since $\scT$ satisfies the local-to-global principle, we have $\scT=\loct\parens{\gG_s}{s\in S}$. Hence, by~\Cref{rem:gen-t-gen-k}, $\loca(A)=\loca\parens{\Gamma_s\ast A}{s\in S}$ and $\coloch(A)=\coloch\parens{[\gG_s,A]_\ast}{s\in S}$, for all $A\in \scK$. This proves the statement.
\end{proof}

\begin{cor}\label{cor:ltg-implies-coltg}
Under~\Cref{hyp:tc3} for the case $\scK=\scT$, if $\scT$ satisfies the local-to-global principle, then $\scT$ satisfies the colocal-to-global principle.
\end{cor}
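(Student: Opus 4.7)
The plan is to observe that this corollary is essentially a direct specialization of~\Cref{prop:ltg-implies-ltg-coltg}. The proposition is stated for a general $\scT$-module $\scK$, assuming $\scT$ itself satisfies the local-to-global principle, and concludes that $\scK$ satisfies both the local-to-global and (under~\Cref{hyp:tc3}) the colocal-to-global principles. Every big tt-category $\scT$ is a module over itself via its own tensor product, with the relative internal-hom coinciding with the usual internal-hom $[-,-]$.

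Concretely, I would first invoke this canonical self-module structure $\scK = \scT$ with $\ast = \ot$, noting that~\Cref{hyp:tc3} in this setting is precisely the statement that $[-,A]\colon \scT^\rmop \to \scT$ preserves triangles for every $A\in \scT$ — which is the assumption made in the corollary. Then I would apply~\Cref{prop:ltg-implies-ltg-coltg} to this module to immediately obtain
\[
\coloch(A)=\coloch\parens{[\gG_s,A]}{s\in S}, \quad \forall A\in \scT,
\]
which is exactly the colocal-to-global principle for $\scT$.

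There is no real obstacle: the work has already been done in~\Cref{lem:generators-ideals-submodules} and~\Cref{prop:ltg-implies-ltg-coltg}, whose proofs rely on the fact that the local-to-global principle for $\scT$ forces $\scT=\loct\parens{\gG_s}{s\in S}$, together with the image-containment lemma (\Cref{lem:images-of-ideals}) applied to the functor $[-,A]$. The corollary is simply the self-action specialization, and its statement is recorded separately only for emphasis.
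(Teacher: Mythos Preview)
Your proposal is correct and matches the paper's approach exactly: the corollary is stated without proof in the paper, being an immediate specialization of \Cref{prop:ltg-implies-ltg-coltg} to the self-action $\scK=\scT$, $\ast=\ot$.
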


\begin{ex}
Let $R$ be a graded commutative noetherian ring such that $\scT$ is $R$-linear and consider the BIK support--cosupport $(\supp_R,\cosupp_R)$, which takes values in $\supp_R(1)\subseteq\Spec(R)$ --- this may not be an equality. As explained in~\cite[Corollary 7.11]{BarthelHeardSanders23}, if $\scT$ is stratified in the sense of BIK, then $\supp_R(1)$ is homeomorphic to $\Spc$ and the BIK support is identified with the Balmer--Favi support under this homeomorphism. It then follows that $\scT$ is stratified by the Balmer--Favi support. Now since the tensor-idempotents $\gG_\mfp 1$ (defining the BIK support) and the tensor-idempotents $g_\mfp$ (defining the Balmer--Favi support) have the same support (which is $\{\mfp\}$) it follows that $\loct(\gG_\mfp 1)=\loct(g_\mfp)$. Applying~\Cref{lem:images-of-ideals} for the functor $[-,I]$ (taking into account~\Cref{hyp:tc3}) it follows that $\coloch([\gG_\mfp 1,I])=\coloch([g_\mfp,I])$. By~\Cref{cor:ltg-implies-coltg}, $\scT$ satisfies the colocal-to-global principle with respect to the Balmer--Favi support. Taking into account~\Cref{thm:costrat-coltg-comin}, we conclude that if $\scT$ is BIK-stratified, then: $\scT$ is Balmer--Favi-costratified if and only if $\scT$ is BIK-costratified if and only if $\coloch([\gG_\mfp 1,I])$ is minimal, for all $\mfp\in \supp_R(1)$. If $\scT=\Modu(kG)$ is the stable module category of the group algebra of a finite group $G$, then $\scT$ is BIK-costratified by the canonical action of $H^\ast(G,k)$; see~\cite[Theorem 11.13]{BensonIyengarKrause12}. We infer that $\Modu(kG)$ is Balmer--Favi-costratified.
\end{ex}

\section{Prime submodules}\label{sec:primes}

In this section we introduce the classes of prime localizing submodules and $\shom$-prime colocalizing submodules of a given $\scT$-module $\scK$. The class of prime localizing submodules generalizes the class of objectwise-prime localizing tensor-ideals~\cite{BalchinStevenson23,Verasdanis23} in the context of relative tensor-triangular geometry, while the class of $\shom$-prime colocalizing submodules specializes to the class of $\shom$-prime colocalizing left $\shom$-ideals if $\scK=\scT$.

\subsection{Prime localizing and colocalizing submodules}
As before, $(\rms_\Gamma,\rmc_\Gamma)$ will be a good support-cosupport pair on $\scT$ with values in a space $S$. Given $\scL \in \Loca(\scK)$ and $\scC\in \Coloch(\scK)$, we define two subcategories of $\scT$ as follows:
\begin{align*}
\scL^{\ot \rmL}&=\set{X\in \scT}{\mathmakebox[\widthof{$[X,\scK]_\ast\subseteq \scC$}]{X\ast \scK \subseteq \scL}},
\\
\scC^{\ot\rmC}&=\set{X\in \scT}{[X,\scK]_\ast\subseteq \scC},
\end{align*}
where $X\ast \scK\coloneqq \loca\parens{X\ast A}{A\in \scK}$ and $[X,\scK]_\ast \coloneqq \coloch\parens{[X,A]_\ast}{A\in \scK}$, with the latter also being equal to $\coloch([X,I_\scK]_\ast)$. Evidently, if $\scL_1\subseteq \scL_2$, then $\scL_1^{\ot \rmL}\subseteq \scL_2^{\ot \rmL}$ and if $\scC_1\subseteq \scC_2$, then $\scC_1^{\ot \rmC}\subseteq \scC_2^{\ot \rmC}$.

\begin{rem}\label{rem:c-cones}
Clearly, $\scL^{\ot \rmL}$ is a localizing tensor-ideal of $\scT$ and $\scC^{\ot \rmC}$ is closed under coproducts, suspensions and the tensor product. Under~\Cref{hyp:tc3}, $\scC^{\ot \rmC}$ is also closed under cones and so, $\scC^{\ot \rmC}$ is a localizing tensor-ideal of $\scT$.
\end{rem}

\begin{rem}\label{rem:lot-self-action}
If $\scK=\scT$ and $\ast=\ot$, then $\scL^{\ot\rmL}=\scL$. The inclusion $\scL^{\ot\rmL}\subseteq \scL$ follows from the equality $X\ot \scT=\loct(X)$, while the inclusion $\scL \subseteq \scL^{\ot \rmL}$ holds because $\scL$ is a tensor-ideal.
\end{rem}

\begin{defn}
$\phantom{}$
\begin{enumerate}[\rm(a)]
\item
A proper localizing submodule $\scL\subseteq \scK$ is called \emph{prime} if $X\ast A\in \scL$ implies $X\in \scL^{\ot \rmL}$ or $A\in \scL$.
\item
A proper colocalizing $\shom$-submodule $\scC\subseteq \scK$ is called \emph{$\shom$-prime} if $[X,A]_\ast\in \scC$ implies $X\in \scC^{\ot \rmC}$ or $A\in \scC$.
\end{enumerate}
\end{defn}

\begin{rem}
If $\scK=\scT$ and $\ast=\ot$, then the notion of prime localizing submodule recovers the notion of objectwise-prime localizing tensor-ideal; see~\Cref{rem:lot-self-action}. The notion of $\shom$-prime colocalizing $\shom$-submodule provides the notion of $\shom$-prime colocalizing left $\shom$-ideal.
\end{rem}

\begin{lem}
Let $\scL$ be a prime localizing submodule of $\scK$ and let $\scC$ be a $\shom$-prime colocalizing submodule of $\scK$. Then $\scL^{\ot \rmL}$ and $\scC^{\ot \rmC}$ are objectwise-prime, in the sense that if $X\ot Y\in \scL^{\ot \rmL}$, then $X\in \scL^{\ot \rmL}$ or $Y\in \scL^{\ot \rmL}$ and similarly for $\scC^{\ot \rmC}$.
\end{lem}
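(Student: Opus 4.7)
The plan is to reduce both statements to the definitions by using the associator and hom-associator isomorphisms to convert a tensor product of elements of $\scT$ into an iterated action (or iterated internal-hom), and then apply the primeness hypothesis for a fixed but arbitrary $A \in \scK$. Two observations make this mechanical: $X \in \scL^{\ot\rmL}$ iff $X\ast A \in \scL$ for all $A\in\scK$ (since $\scL$ is localizing and $X\ast\scK = \loca\{X\ast A : A\in \scK\}$), and similarly $X\in \scC^{\ot\rmC}$ iff $[X,A]_\ast \in \scC$ for all $A\in \scK$.

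For the localizing case, suppose $X\ot Y\in \scL^{\ot\rmL}$, i.e., $(X\ot Y)\ast A \in \scL$ for every $A\in \scK$. The associator gives $(X\ot Y)\ast A \cong X\ast(Y\ast A)$, so $X\ast(Y\ast A) \in \scL$. Since $\scL$ is prime, for each $A$ either $X\in \scL^{\ot\rmL}$ or $Y\ast A\in \scL$. If $X\notin \scL^{\ot\rmL}$, then $Y\ast A\in \scL$ for every $A\in \scK$, which gives $Y\in \scL^{\ot\rmL}$. Thus $\scL^{\ot\rmL}$ is objectwise-prime.

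For the colocalizing case, suppose $X\ot Y\in \scC^{\ot\rmC}$, i.e., $[X\ot Y,A]_\ast \in \scC$ for every $A\in \scK$. The hom-associator of~\Cref{lem:hom-associativity} gives $[X\ot Y,A]_\ast \cong [X,[Y,A]_\ast]_\ast$, so $[X,[Y,A]_\ast]_\ast \in \scC$. Since $\scC$ is $\shom$-prime, for each $A$ either $X\in \scC^{\ot\rmC}$ or $[Y,A]_\ast\in \scC$. If $X\notin \scC^{\ot\rmC}$, then $[Y,A]_\ast \in \scC$ for every $A\in \scK$, hence $Y\in \scC^{\ot\rmC}$.

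No genuine obstacle arises here; the argument is really a two-line application of the (hom-)associator and the defining implication of (hom-)primeness. The only thing worth flagging is the characterisation of membership in $\scL^{\ot\rmL}$ and $\scC^{\ot\rmC}$ via the generating collections $\{X\ast A : A\in \scK\}$ and $\{[X,A]_\ast : A\in \scK\}$, which is immediate from the definitions together with the fact that $\scL$ is localizing and $\scC$ is colocalizing and closed under the internal-hom.
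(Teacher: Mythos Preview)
Your proof is correct and follows essentially the same approach as the paper: use the (hom-)associator to rewrite $(X\ot Y)\ast A$ (resp.\ $[X\ot Y,A]_\ast$) as an iterated action (resp.\ iterated internal-hom), apply the primeness hypothesis, and conclude by the ``if $X\notin$ then for all $A$'' argument. The paper only writes out the $\scC^{\ot\rmC}$ case and declares the other analogous, whereas you spell out both; otherwise the arguments are identical.
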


\begin{proof}
We will prove that $\scC^{\ot \rmC}$ is objectwise-prime. The proof for $\scL^{\ot \rmL}$ is analogous. Let $X,Y$ be objects of $\scT$ such that $X\ot Y\in \scC^{\ot \rmC}$. Then $[X\ot Y,A]_\ast \in \scC,\ \forall A\in \scK$. By~\Cref{lem:hom-associativity}, $[X,[Y,A]_\ast]_\ast \cong [X\ot Y,A]_\ast$. Since $\scC$ is $\shom$-prime, $X\in \scC^{\ot \rmC}$ or $[Y,A]_\ast\in \scC$. If $X\notin \scC^{\ot \rmC}$, then $[Y,A]_\ast\in \scC,\ \forall A\in \scK$, i.e., $Y\in \scC^{\ot \rmC}$. This proves that $\scC^{\ot \rmC}$ is objectwise-prime.
\end{proof}

The main result of this section, i.e.,~\Cref{thm:costrat-hom-primes}, is a consequence of the following series of lemmas.

\begin{lem}\label{lem:ker-otl-otc}
The following statements hold:
\begin{enumerate}[\rm(a)]
\item
$\Ker(\gG_s\ot -) \subseteq \Ker(\gG_s\ast -)^{\ot\rmL}=\Ker([\gG_s,-]_\ast)^{\ot \rmC},\, \forall s\in S$.
\item If $\scK$ is conservative, then $\Ker(\gG_s\ot -) = \Ker(\gG_s\ast -)^{\ot\rmL},\, \forall s\in S$.
\item
If $\Ker(\gG_s\ot -) = \Ker(\gG_s\ast -)^{\ot\rmL},\, \forall s\in S$ and $\sg$ detects vanishing, then $\scK$ is conservative.
\end{enumerate}
\end{lem}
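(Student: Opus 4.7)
My plan is to identify both $\Ker(\gG_s\ast -)^{\ot\rmL}$ and $\Ker([\gG_s,-]_\ast)^{\ot\rmC}$ with the single subcategory $\scN_s\coloneqq\{X\in\scT : \gG_s\ot X\in\Ann_\scT(\scK)\}$; once this identification is in place, all three claims become essentially formal. The one auxiliary fact I will need is that, for $Y\in\scT$, $Y\in\Ann_\scT(\scK)$ if and only if $[Y,A]_\ast=0$ for every $A\in\scK$. This I would establish by a short adjunction argument: $\Hom_\scK(B,[Y,A]_\ast)\cong\Hom_\scK(Y\ast B,A)$ gives one direction immediately, and for the converse, taking $A=Y\ast B$ forces $\mathrm{id}_{Y\ast B}=0$.

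For the identification itself, $X\in\Ker(\gG_s\ast -)^{\ot\rmL}$ means $X\ast\scK\subseteq\Ker(\gG_s\ast -)$; since $\Ker(\gG_s\ast -)$ is a localizing submodule of $\scK$, this collapses to $\gG_s\ast(X\ast A)=0$ for every $A\in\scK$, which the associator rewrites as $(\gG_s\ot X)\ast A=0$ for every $A$, \ie, $X\in\scN_s$. For the second, $X\in\Ker([\gG_s,-]_\ast)^{\ot\rmC}$ means $[X,\scK]_\ast\subseteq\Ker([\gG_s,-]_\ast)$; since $\Ker([\gG_s,-]_\ast)$ is a colocalizing $\shom$-submodule of $\scK$ (closed under $[Z,-]_\ast$ because the hom-associator together with symmetry of $\ot$ gives $[\gG_s,[Z,A]_\ast]_\ast\cong[Z,[\gG_s,A]_\ast]_\ast$), this collapses to $[\gG_s,[X,A]_\ast]_\ast=0$ for every $A$. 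By \Cref{lem:hom-associativity} this becomes $[\gG_s\ot X,A]_\ast=0$ for every $A$, and the auxiliary observation above again delivers $X\in\scN_s$. This establishes the equality in (a), and the containment $\Ker(\gG_s\ot -)\subseteq\scN_s$ is trivial since $0\in\Ann_\scT(\scK)$.

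Parts (b) and (c) are then immediate from the description $\Ker(\gG_s\ast -)^{\ot\rmL}=\scN_s$. If $\scK$ is conservative, $\Ann_\scT(\scK)=0$, and $\scN_s$ collapses to $\Ker(\gG_s\ot -)$, proving (b). For (c), pick $X\in\Ann_\scT(\scK)$; since $\Ann_\scT(\scK)$ is a tensor-ideal of $\scT$, $\gG_s\ot X\in\Ann_\scT(\scK)$ for every $s\in S$, so $X\in\scN_s=\Ker(\gG_s\ast -)^{\ot\rmL}=\Ker(\gG_s\ot -)$ by hypothesis. Hence $\gG_s\ot X=0$ for every $s$, \ie, $\sg(X)=\varnothing$, and detection of vanishing gives $X=0$; thus $\Ann_\scT(\scK)=0$ and $\scK$ is conservative. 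I do not foresee any real obstacle: the only step with genuine content is the equivalence $[Y,\scK]_\ast=0\Leftrightarrow Y\in\Ann_\scT(\scK)$, which is a two-line adjunction computation.
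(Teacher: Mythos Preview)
Your proposal is correct and follows essentially the same route as the paper. The paper identifies $\Ker(\gG_s\ast -)^{\ot\rmL}$ with $\{X : (\gG_s\ot X)\ast -=0\}$ and $\Ker([\gG_s,-]_\ast)^{\ot\rmC}$ with $\{X : [\gG_s\ot X,-]_\ast=0\}$, then equates the two via the adjunction $(\gG_s\ot X)\ast - \dashv [\gG_s\ot X,-]_\ast$; this is exactly your identification with $\scN_s$ and your auxiliary fact, just without the extra name. Parts (b) and (c) are handled identically.
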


\begin{proof}
Let $X$ be an object of $\scT$. Then we have $X\in \Ker(\gG_s\ast -)^{\ot\rmL}$ if and only if $\gG_s\ast (X\ast A)\cong (\gG_s\ot X)\ast A=0,\ \forall A\in \scK$, which is equivalent to $(\gG_s \ot X) \ast - =0$. Similarly, using the isomorphism $[\gG_s\ot X,-]_\ast \cong [\gG_s,[X,-]_\ast]_\ast$, one deduces that $X\in \Ker([\gG_s,-]_\ast)^{\ot \rmC}$ if and only if $[\gG_s\ot X,-]_\ast =0$. Since $(\gG_s \ot X) \ast - \dashv [\gG_s \ot X,-]_\ast$, these two functors are either both the zero functor on $\scK$ or none of them is the zero functor. Therefore, $\Ker(\gG_s\ast -)^{\ot\rmL}=\Ker([\gG_s,-]_\ast)^{\ot \rmC}$. Since $\Ker(\gG_s\ast -)^{\ot\rmL}=\set{X\in \scT}{(\gG_s\ot X)\ast - =0}$, it immediately follows that $\Ker(\gG_s\ot -) \subseteq \Ker(\gG_s\ast -)^{\ot\rmL}$. This proves $(\rma)$.

If $\scK$ is conservative and $(\gG_s\ot X)\ast -=0$, then $\gG_s \ot X=0$. Hence, $\Ker(\gG_s\ot -)=\Ker(\gG_s\ast -)^{\ot\rmL}$. This proves $(\rmb)$.

Let $X\in \scT$ such that $X\ast -=0$. Then $(\Gamma_s \ot X)\ast - \cong X \ast (\Gamma_s \ast -)=0,\, \forall s\in S$. Therefore, $X\in \Ker(\gG_s\ast -)^{\ot\rmL}$. This implies that $X\in \Ker(\Gamma_s \ot -),\, \forall s\in S$, i.e., $\Gamma_s \ot X=0,\, \forall s\in S$. Equivalently, $\sg(X)=\varnothing$. Since $\sg$ detects vanishing, it follows that $X=0$. This proves $(\rmc)$.
\end{proof}

\begin{lem}\label{lem:obj-contained}
$\phantom{}$
\begin{enumerate}[\rm(a)]
\item
Let $\scL$ be a prime localizing submodule of $\scK$. There is at most one $s\in \cga(I_\scK)$ such that $\scL\subseteq \Ker(\Gamma_s \ast -)$.
\item
Let $\scC$ be a $\shom$-prime colocalizing submodule of $\scK$. There is at most one $s\in \cga(I_\scK)$ such that $\scC\subseteq \Ker[\Gamma_s, -]_\ast$.
\end{enumerate}
\end{lem}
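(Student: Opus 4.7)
The plan is to argue both parts by contradiction using the orthogonality and idempotence properties of a good support--cosupport pair. Suppose for $(\rma)$ that there exist two distinct points $s,r\in \cga(I_\scK)$ with $\scL\subseteq \Ker(\Gamma_s\ast -)\cap \Ker(\Gamma_r\ast -)$. Since $(\sg,\cg)$ is good, $\Gamma_s \otimes \Gamma_r =0$, so for every $A\in \scK$ we have
\[
\Gamma_s \ast (\Gamma_r \ast A) \cong (\Gamma_s \otimes \Gamma_r)\ast A = 0 \in \scL.
\]
Primeness of $\scL$, applied with $X=\Gamma_s$ and object $\Gamma_r \ast A \in \scK$, forces either $\Gamma_s \in \scL^{\otimes \rmL}$ or $\Gamma_r \ast A \in \scL$.

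In the first case, $\Gamma_s \ast \scK \subseteq \scL \subseteq \Ker(\Gamma_s \ast -)$, and using $\Gamma_s \otimes \Gamma_s \cong \Gamma_s$ we get $\Gamma_s \ast A \cong \Gamma_s \ast (\Gamma_s \ast A)=0$ for every $A$, so $\Gamma_s \ast -=0$ as an endofunctor of $\scK$. In the second case, the inclusion $\scL\subseteq \Ker(\Gamma_r \ast -)$ yields $\Gamma_r \ast (\Gamma_r \ast A)=0$, and the same idempotence trick gives $\Gamma_r \ast -=0$. In either case, one of the left adjoints $\Gamma_s\ast -$ or $\Gamma_r\ast -$ is zero, and hence so is its right adjoint $[\Gamma_s,-]_\ast$, respectively $[\Gamma_r,-]_\ast$. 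Evaluating at $I_\scK$ contradicts $s\in \cga(I_\scK)$, respectively $r\in \cga(I_\scK)$.

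For $(\rmb)$, the argument is formally dual. Assuming $\scC\subseteq \Ker[\Gamma_s,-]_\ast \cap \Ker[\Gamma_r,-]_\ast$ for distinct $s,r\in \cga(I_\scK)$, the hom-associator~\Cref{lem:hom-associativity} gives
\[
[\Gamma_s,[\Gamma_r,A]_\ast]_\ast \cong [\Gamma_s \otimes \Gamma_r, A]_\ast = 0 \in \scC
\]
for every $A\in \scK$. By $\shom$-primeness of $\scC$, either $\Gamma_s\in \scC^{\otimes \rmC}$ or $[\Gamma_r,A]_\ast \in \scC$ for every $A$, i.e.\ $\Gamma_r\in \scC^{\otimes \rmC}$. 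In either case, say $\Gamma_t\in \scC^{\otimes \rmC}$ with $t\in \{s,r\}$, we have $[\Gamma_t,A]_\ast \in \scC \subseteq \Ker[\Gamma_t,-]_\ast$ for all $A$, and $\Gamma_t \otimes \Gamma_t \cong \Gamma_t$ gives
\[
[\Gamma_t,I_\scK]_\ast \cong [\Gamma_t\otimes \Gamma_t,I_\scK]_\ast \cong [\Gamma_t,[\Gamma_t,I_\scK]_\ast]_\ast = 0,
\]
contradicting $t\in \cga(I_\scK)$.

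The only subtlety is the dichotomy arising from primeness (resp.\ $\shom$-primeness): in each branch we must use both the assumed containment in the kernel and the idempotence $\Gamma_t \otimes \Gamma_t \cong \Gamma_t$ to collapse the functor to zero. The orthogonality $\Gamma_s\otimes \Gamma_r=0$ is what triggers primeness in the first place, and the idempotence is what closes the loop; these are precisely the two defining features of a good support--cosupport pair, so the proof uses~\Cref{defn:good-supp} minimally and only through these two properties (cf.~\Cref{rem:replacing-good}).
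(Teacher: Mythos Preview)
Your proof is correct and follows essentially the same strategy as the paper: use the orthogonality $\Gamma_s\otimes\Gamma_r=0$ to trigger the (hom\nobreakdash-)primeness dichotomy, and then close each branch with the idempotence $\Gamma_t\otimes\Gamma_t\cong\Gamma_t$ to force $\Gamma_t\ast-=0$ (equivalently $[\Gamma_t,-]_\ast=0$), contradicting $t\in\cga(I_\scK)$. The paper's version differs only organizationally---it first eliminates the branch $\Gamma_s\in\scC^{\ot\rmC}$ by invoking \Cref{lem:ker-otl-otc}, whereas you handle both branches symmetrically with a direct idempotence computation; the content is the same.
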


\begin{proof}
$\phantom{}$
\begin{enumerate}[\rm(a)]
\item
Similar to $(\rmb)$.
\item
Let $s\in \cga(I_\scK)$ and suppose that $\scC\subseteq \Ker[\gG_s,-]_\ast$. Let $r\in S$ such that $r\neq s$ and let $A\in \scK$. Then $[\Gamma_s,[\Gamma_r,A]_\ast]_\ast=0\in \scC$. Since $\scC$ is $\shom$-prime, $\Gamma_s \in \scC^{\ot \rmC} \subseteq \Ker[\Gamma_s, -]_\ast^{\ot \rmC} = \Ker (\Gamma_s \ast -)^{\ot \rmL}$ or $[\Gamma_r,A]_\ast\in \scC\subseteq \Ker[\Gamma_s, -]_\ast$. For the equality $\Ker[\Gamma_s, -]_\ast^{\ot \rmC} = \Ker (\Gamma_s \ast -)^{\ot \rmL}$, see~\Cref{lem:ker-otl-otc}. The former of the two does not hold since $\Gamma_s\in \Ker(\Gamma_s \ast -)^{\ot \rmL}$ if and only if $\Gamma_s \ast -=0$, but $s\in \cga(I_\scK)$ which means that $\Gamma_s \ast -\neq 0$. It follows that $\scC$ contains all objects $[\Gamma_r,A]_\ast$, for $r\neq s$ and $A\in \scK$. So, if $\scC\subseteq \Ker [\Gamma_r,-]_\ast$, for $r\neq s$ and $r\in \cga(I_\scK)$, then $[\Gamma_r,A]_\ast\in \Ker[\gG_r,-]_\ast,\, \forall A\in \scK$. It follows that $[\Gamma_r,-]_\ast=0$; thus, $\gG_r\ast -=0$, which is false since $r\in \cga(I_\scK)$.\qedhere
\end{enumerate}
\end{proof}

\begin{lem}\label{lem:coltg-kernels}
If $\scK$ satisfies the colocal-to-global principle, then it holds $\forall s \in S \colon$ $\Ker [\gG_s,-]_\ast=\coloch\parens{[\gG_r,I_\scK]_\ast}{r\neq s}$.
\end{lem}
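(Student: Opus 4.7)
My plan is to prove both inclusions separately, using the fact that $(\sg,\cg)$ is a good support--cosupport pair on one side and the colocal-to-global hypothesis on the other.

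For the inclusion $\coloch\parens{[\gG_r,I_\scK]_\ast}{r\neq s}\subseteq \Ker[\gG_s,-]_\ast$, I would first check that $\Ker[\gG_s,-]_\ast$ is indeed a colocalizing $\shom$-submodule of $\scK$: closure under products, suspensions and triangles is immediate from $[\gG_s,-]_\ast$ being a triangulated functor, and closure under $[X,-]_\ast$ follows from the natural isomorphism
\[
[\gG_s,[X,A]_\ast]_\ast \cong [\gG_s\ot X, A]_\ast \cong [X,[\gG_s,A]_\ast]_\ast
\]
provided by \Cref{lem:hom-associativity} together with symmetry of $\ot$. It then suffices to verify that the generators $[\gG_r,I_\scK]_\ast$, for $r\neq s$, lie in $\Ker[\gG_s,-]_\ast$. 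By hom-associativity again,
\[
[\gG_s,[\gG_r,I_\scK]_\ast]_\ast \cong [\gG_s\ot \gG_r, I_\scK]_\ast,
\]
and since $(\sg,\cg)$ is good, $\gG_s\ot \gG_r=0$ whenever $r\neq s$, so this vanishes.

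For the reverse inclusion, let $A\in \Ker[\gG_s,-]_\ast$, i.e.\ $[\gG_s,A]_\ast=0$. Applying the colocal-to-global principle yields
\[
\coloch(A)=\coloch\parens{[\gG_t,A]_\ast}{t\in S}=\coloch\parens{[\gG_t,A]_\ast}{t\in S,\, t\neq s},
\]
where the second equality drops the (zero) generator corresponding to $t=s$. By \Cref{rem:comin-expanded}, for each $t\neq s$ we have $[\gG_t,A]_\ast \in \coloch\parens{[\gG_t,B]_\ast}{B\in \scK}=\coloch([\gG_t,I_\scK]_\ast)$. Consequently
\[
A\in \coloch(A)\subseteq \coloch\parens{[\gG_r,I_\scK]_\ast}{r\neq s},
\]
which gives the desired inclusion and completes the proof.

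The argument is essentially formal; the only subtle point is the reminder that $\Ker[\gG_s,-]_\ast$ is genuinely closed under the internal-hom action (not merely a colocalizing subcategory), which is what allows us to contain the generated $\shom$-submodule inside it in the first direction.
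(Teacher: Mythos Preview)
Your proof is correct and follows essentially the same approach as the paper's: both establish the first inclusion by checking that the generators $[\gG_r,I_\scK]_\ast$ lie in $\Ker[\gG_s,-]_\ast$ (using $\gG_s\ot\gG_r=0$ via hom-associativity), and both obtain the reverse inclusion by applying the colocal-to-global principle to an $A\in\Ker[\gG_s,-]_\ast$, dropping the vanishing $s$-term, and invoking~\Cref{rem:comin-expanded}. You are simply a bit more explicit about why $\Ker[\gG_s,-]_\ast$ is a colocalizing $\shom$-submodule, which the paper takes for granted.
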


\begin{proof}
Let $r,s\in S$ such that $r\neq s$. Then $[\Gamma_r,I_\scK]_\ast\in \Ker[\Gamma_s,-]_\ast$. Therefore, $\coloch\parens{[\gG_r,I_\scK]_\ast}{r\neq s}\subseteq \Ker [\gG_s,-]_\ast$. Let $A\in \Ker[\Gamma_s,-]_\ast$. Then $s\notin \cga(A)$. Since $\scK$ satisfies the colocal-to-global principle,
\begin{align*}
\coloch(A)&=\coloch\parens{[\Gamma_r,A]_\ast}{r\in \cga(A)}\\
&=\coloch\parens{[\Gamma_r,A]_\ast}{r\neq s}\\
&\subseteq \coloch\parens{[\gG_r,I_\scK]_\ast}{r\neq s}.
\end{align*}
See~\Cref{rem:coloc-optimal} for the second equality and~\Cref{rem:comin-expanded} for the containment relation. Hence, $A\in \coloch\parens{[\gG_r,I_\scK]_\ast}{r\neq s}$, completing the proof.
\end{proof}

\begin{lem}\label{lem:intersection-ker}%
Let $\scC\in \Coloch(\scK)$. Then $\taucga(\sigmacga(\scC))=\bigcap_{\substack{\scC\subseteq \Ker [\gG_s,-]_\ast \\ s\in \cga(I_\scK)}} \Ker[\gG_s,-]_\ast$. If $\scK$ is costratified, then $\scC=\bigcap_{\substack{\scC\subseteq \Ker [\gG_s,-]_\ast \\ s\in \cga(I_\scK)}} \Ker[\gG_s,-]_\ast$.
\end{lem}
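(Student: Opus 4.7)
The plan is to unwind both sides in terms of cosupport. First I would record the key translation: for any object $A \in \scK$ and any $s \in S$, the condition $A \in \Ker[\gG_s,-]_\ast$ says precisely $[\gG_s,A]_\ast = 0$, i.e.\ $s \notin \cga(A)$. Consequently, $\scC \subseteq \Ker[\gG_s,-]_\ast$ if and only if $s \notin \cga(B)$ for every $B \in \scC$, which is the same as $s \notin \sigmacga(\scC)$. Thus the indexing set $\{s \in \cga(I_\scK) : \scC \subseteq \Ker[\gG_s,-]_\ast\}$ equals $\cga(I_\scK) \setminus \sigmacga(\scC)$.

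Next I would observe that $\sigmacga(\scK) = \cga(I_\scK)$: this is already recorded in the paper, following from $\scK = \coloch(I_\scK)$ together with~\Cref{lem:cosupport-of-collection}. In particular $\cga(A) \subseteq \cga(I_\scK)$ for every $A \in \scK$, and likewise $\sigmacga(\scC) \subseteq \cga(I_\scK)$. Using these two observations, membership of $A$ in the right-hand intersection unfolds as follows:
\begin{align*}
A \in \bigcap_{\substack{\scC \subseteq \Ker[\gG_s,-]_\ast \\ s \in \cga(I_\scK)}} \Ker[\gG_s,-]_\ast
&\Leftrightarrow \forall s \in \cga(I_\scK) \setminus \sigmacga(\scC),\ s \notin \cga(A) \\
&\Leftrightarrow \cga(A) \cap \bigl(\cga(I_\scK) \setminus \sigmacga(\scC)\bigr) = \varnothing \\
&\Leftrightarrow \cga(A) \subseteq \sigmacga(\scC),
\end{align*}
where the last equivalence uses that $\cga(A) \subseteq \cga(I_\scK)$. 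But $\cga(A) \subseteq \sigmacga(\scC)$ is exactly the defining condition for $A \in \taucga(\sigmacga(\scC))$, yielding the first claimed equality.

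For the second assertion, suppose $\scK$ is costratified. By~\Cref{defn:strat--costrat}, the maps $\taucga$ and $\sigmacga$ are mutually inverse bijections between $\scP(\cga(I_\scK))$ and $\Coloch(\scK)$. Since $\sigmacga(\scC) \in \scP(\cga(I_\scK))$, we conclude $\scC = \taucga(\sigmacga(\scC))$, and then the first part of the lemma identifies this with the desired intersection. No step here looks delicate; the only thing to watch carefully is the set-theoretic manipulation in the displayed chain of equivalences, which only works because all the cosupports involved are subsets of $\cga(I_\scK)$.
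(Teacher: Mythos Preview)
Your proof is correct and follows essentially the same approach as the paper's own proof: both arguments unwind the two sides in terms of the conditions $s\notin\sigmacga(\scC)$ and $s\in\cga(A)$, arriving at the criterion $\cga(A)\subseteq\sigmacga(\scC)$. The only cosmetic difference is that the paper first argues with the index ranging over all of $S$ and then remarks that restricting to $\cga(I_\scK)$ does not change the intersection, whereas you work with the restricted index set from the outset and invoke $\cga(A)\subseteq\cga(I_\scK)$ explicitly; both ways handle the same point.
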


\begin{proof}
Let $A\in \scK$. Then $A\notin \bigcap_{\scC\subseteq \Ker [\gG_s,-]_\ast} \Ker[\gG_s,-]_\ast$ if and only if there exists $s\in S$ such that $\scC\subseteq \Ker [\gG_s,-]_\ast$ and $[\gG_s,A]_\ast \neq 0$. Equivalently, $s\notin \sigmacga(\scC)$ and $s\in \cga(A)$. In other words, $\cga(A) \nsubseteq \sigmacga(\scC)$. Since $\taucga(\sigmacga(\scC))$ consists precisely of those $A\in \scK$ such that $\cga(A)\subseteq \sigmacga(\scC)$, it follows that $\taucga(\sigmacga(\scC))=\bigcap_{\scC\subseteq \Ker [\gG_s,-]_\ast} \Ker[\gG_s,-]_\ast$. Finally, if $\scK$ is costratified, then $\scC=\taucga(\sigmacga(\scC))$, which proves the statement (the indexing set of the intersection involved in the claimed equalities can be considered to consist of points $s\in \cga(I_\scK)$ since if $s\notin \cga(I_\scK)$, then $[\Gamma_s,-]_\ast=0$ and so $\Ker[\Gamma_s,-]_\ast=\scK$ so the intersection is not affected).
\end{proof}

\begin{thm}\label{thm:costrat-hom-primes}
Let $\scK$ be a costratified $\scT$-module. Then there is a bijective correspondence between $\shom$-prime colocalizing submodules of $\scK$ and points of $\cga(I_\scK)$. A point $s\in \cga(I_\scK)$ is associated with $\Ker[\Gamma_s,-]_\ast=\coloch\parens{[\gG_r,I_\scK]_\ast}{r\neq s}$.
\end{thm}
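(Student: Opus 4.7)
The plan is to show that the assignment $\Phi\colon s\mapsto \Ker[\gG_s,-]_\ast$ defines a bijection from $\cga(I_\scK)$ onto the collection of $\shom$-prime colocalizing submodules of $\scK$. Throughout, I would exploit the fact that, by~\Cref{thm:costrat-coltg-comin}~\eqref{thm:ccc-b}, costratification of $\scK$ is equivalent to the colocal-to-global principle together with cominimality, both of which are therefore at my disposal.

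For well-definedness, $\Ker[\gG_s,-]_\ast$ is proper because $s\in \cga(I_\scK)$ means $[\gG_s,I_\scK]_\ast \neq 0$. To check $\shom$-primeness, I would take $[X,A]_\ast\in \Ker[\gG_s,-]_\ast$ and translate this through~\Cref{lem:hom-associativity} to the identification $[\gG_s,[X,A]_\ast]_\ast \cong [X,[\gG_s,A]_\ast]_\ast$. Either $[\gG_s,A]_\ast=0$, in which case $A\in \Ker[\gG_s,-]_\ast$ and we are done; or the non-zero object $[\gG_s,A]_\ast$ lies in the colocalizing $\shom$-submodule $\Ker[X,-]_\ast$, in which case cominimality forces $\coloch([\gG_s,I_\scK]_\ast)\subseteq \Ker[X,-]_\ast$. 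Consequently, $[X,[\gG_s,I_\scK]_\ast]_\ast = 0$, and a second application of the hom-associator rewrites this as $[\gG_s,[X,I_\scK]_\ast]_\ast=0$. Invoking~\Cref{rem:comin-expanded} to identify $[X,\scK]_\ast$ with $\coloch([X,I_\scK]_\ast)$, we conclude $[X,\scK]_\ast \subseteq \Ker[\gG_s,-]_\ast$, i.e., $X\in (\Ker[\gG_s,-]_\ast)^{\ot\rmC}$.

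Injectivity of $\Phi$ is~\Cref{lem:obj-contained}~(b). For surjectivity, let $\scC$ be a $\shom$-prime colocalizing submodule of $\scK$. Since $\scC$ is proper and $\scK$ is costratified, $\sigmacga(\scC)\subsetneq \cga(I_\scK)$, so there exists $s\in \cga(I_\scK)$ with $\scC\subseteq \Ker[\gG_s,-]_\ast$. For the reverse containment, I would mimic the proof of~\Cref{lem:obj-contained}~(b): for every $r\in \cga(I_\scK)\setminus\{s\}$ and every $A\in \scK$, the object $[\gG_s,[\gG_r,A]_\ast]_\ast\cong [\gG_r,[\gG_s,A]_\ast]_\ast$ vanishes (because $\gG_s\ot \gG_r=0$) and thus lies in $\scC$; since $\gG_s\notin \scC^{\ot\rmC}$ (otherwise $\gG_s\ast -=0$, contradicting $s\in \cga(I_\scK)$), $\shom$-primeness forces $[\gG_r,A]_\ast\in \scC$. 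Hence $\coloch\parens{[\gG_r,I_\scK]_\ast}{r\neq s}\subseteq \scC$, and~\Cref{lem:coltg-kernels} identifies this subcategory with $\Ker[\gG_s,-]_\ast$.

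The step I expect to be most delicate is the verification of $\shom$-primeness of $\Ker[\gG_s,-]_\ast$: one must juggle two applications of the hom-associator and invoke cominimality at precisely the right moment to overcome the apparent asymmetry between $X$ and $A$ in $[X,A]_\ast$. Once this is in place, the remainder is a streamlined recycling of~\Cref{lem:obj-contained} and~\Cref{lem:coltg-kernels}.
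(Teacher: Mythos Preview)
Your proof is correct, and in one respect it is more complete than the paper's own argument. The paper proceeds differently for surjectivity: it invokes \Cref{lem:intersection-ker} to write an arbitrary $\shom$-prime $\scC$ as $\bigcap_{\scC\subseteq \Ker[\gG_s,-]_\ast}\Ker[\gG_s,-]_\ast$ (using costratification), then appeals to \Cref{lem:obj-contained} to conclude that the indexing set is a singleton, whence $\scC=\Ker[\gG_s,-]_\ast$ for a unique $s$. You bypass \Cref{lem:intersection-ker} entirely, instead locating $s$ via properness of $\sigmacga(\scC)$ and then obtaining the reverse inclusion by rerunning the orthogonality argument from the proof of \Cref{lem:obj-contained} together with \Cref{lem:coltg-kernels}. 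Both routes are short; the paper's is marginally slicker because the intersection formula packages the two containments into one stroke.

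The genuine difference is your explicit verification that $\Ker[\gG_s,-]_\ast$ is $\shom$-prime. The paper's proof does not address this point directly; the nearest result is \Cref{lem:obj-tpf}, which deduces $\shom$-primeness from the Internal-Hom Formula, but IHF is only derived (in Proposition~4.9) under the additional hypothesis that $\scK$ is conservative. Your argument via cominimality alone---using two applications of the hom-associator and \Cref{rem:comin-expanded} to pass from $[X,[\gG_s,A]_\ast]_\ast=0$ to $[X,\scK]_\ast\subseteq \Ker[\gG_s,-]_\ast$---works without that hypothesis and thus supplies the missing half of the bijection in full generality.
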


\begin{proof}
Let $\scC\in \Coloch(\scK)$ be $\shom$-prime. Then $\scC=\bigcap_{\substack{\scC \subseteq \Ker[\Gamma_s,-]_\ast \\ s\in \cga(I_\scK)}}\Ker[\Gamma_s,-]_\ast$, by~\Cref{lem:intersection-ker}. It follows by~\Cref{lem:obj-contained} that $\scC$ must be contained in $\Ker[\Gamma_s,-]_\ast$, for a unique $s\in \cga(I_\scK)$. Conclusion: $\scC=\Ker[\Gamma_s,-]_\ast$, for a unique $s\in \cga(I_\scK)$. The equality $\Ker[\Gamma_s,-]_\ast=\coloch\parens{[\gG_r,I_\scK]_\ast}{r\neq s}$ was proved in~\Cref{lem:coltg-kernels}.
\end{proof}

The following observation, which is of independent interest and will not play a role in the sequel, showcases a conceptual similarity between the theory of actions of tensor-triangulated categories and the theory of associated primes of modules over rings. To see this, recall the following result: If $R$ is a ring and $M$ is a non-zero $R$-module such that for every non-zero submodule $N\subseteq M$, it holds that $\Ann_R(M)=\Ann_R(N)$, then $\Ann_R(M)$ is a prime ideal of $R$.

\begin{prop}
Let $\scL$ be a non-zero localizing submodule of $\scK$ such that for every non-zero localizing submodule $\scL'$ of $\scK$ with $\scL'\subseteq \scL$, it holds that $\Ann_\scT(\scL)=\Ann_\scT(\scL')$. Then $\Ann_\scT(\scL)$ is an objectwise-prime localizing ideal of $\scT$.
\end{prop}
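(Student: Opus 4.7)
The plan is to mimic the classical proof for modules over rings that the proposition refers to. The key identification is: an element $X \in \scT$ annihilates a localizing submodule $\scL' \subseteq \scK$ iff $\scL' \subseteq \Ker(X \ast -)$. Since $\Ker(X \ast -)$ is easily seen to be a localizing submodule of $\scK$ (for closure under the action, use $X \ast (Z \ast B) \cong Z \ast (X \ast B)$), it suffices to check such containments on generators.

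First, I will verify the easy structural facts: $\Ann_\scT(\scL) = \bigcap_{A \in \scL} \Ker(-\ast A)$ is a localizing tensor-ideal of $\scT$ (this is essentially the definition, as noted in the paper for $\Ann_\scT(\scK)$), and it is proper because $\scL \neq 0$ gives some non-zero $A \in \scL$ with $1 \ast A \cong A \neq 0$, so $1 \notin \Ann_\scT(\scL)$.

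The main step is objectwise-primality. Suppose $X \ot Y \in \Ann_\scT(\scL)$, i.e.\ for every $A \in \scL$ we have $(X \ot Y) \ast A \cong X \ast (Y \ast A) = 0$ via the associator. Consider the localizing submodule
\[
Y \ast \scL \coloneqq \loca\parens{Y \ast A}{A \in \scL} \subseteq \scK.
\]
Since $\scL$ is itself a localizing submodule, it is closed under $Y \ast -$, so $Y \ast \scL \subseteq \scL$. Now split into cases. If $Y \ast \scL = 0$, then $Y \ast A = 0$ for all $A \in \scL$, so $Y \in \Ann_\scT(\scL)$ and we are done. Otherwise $Y \ast \scL$ is a non-zero localizing submodule contained in $\scL$, so the hypothesis gives $\Ann_\scT(Y \ast \scL) = \Ann_\scT(\scL)$. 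By the opening observation, $X$ annihilates $Y \ast \scL$ as soon as it annihilates its generating set $\{Y \ast A : A \in \scL\}$, which is precisely what $(X\ot Y) \ast A = 0$ tells us. Hence $X \in \Ann_\scT(Y \ast \scL) = \Ann_\scT(\scL)$.

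I do not expect a serious obstacle here; the only subtlety is making sure that $Y \ast \scL$ really is a localizing submodule contained in $\scL$ (which is immediate from the definitions) and that $X$-annihilation extends from generators to the whole localizing submodule, which is the standard principle above. With those two observations in place, the argument parallels the classical ring-theoretic proof exactly.
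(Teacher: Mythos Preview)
Your proof is correct and is essentially the same as the paper's: both assume $X\ot Y\in\Ann_\scT(\scL)$, form the localizing submodule generated by the action of one factor on $\scL$, apply the hypothesis to identify annihilators, and use that $\Ker(Z\ast-)$ is a localizing submodule to pass from generators to the whole submodule. The only cosmetic difference is that the paper works with $X\ast\scL$ (assuming $X\notin\Ann_\scT(\scL)$ and concluding $Y\in\Ann_\scT(\scL)$), whereas you swap the roles of $X$ and $Y$; your added remarks on properness and the structural facts are fine and slightly more explicit than the paper.
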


\begin{proof}
Let $X,Y\in \scT$ such that $X\ot Y\in \Ann_\scT(\scL)$. This means that $(X\ot Y) \ast \scL=0$. Suppose that $X\notin \Ann_\scT(\scL)$, i.e., $X\ast \scL\neq 0$. Then $\Ann_\scT(X\ast \scL)=\Ann_\scT(\scL)$. Since $Y\ast (X\ast \scL)=(X\ot Y) \ast \scL=0$, it follows that $Y\in \Ann_\scT(X\ast \scL)$, so $Y\in \Ann_\scT(\scL)$.
\end{proof}

\subsection{The action and internal-hom formulas}

\begin{defn}
$\phantom{}$
\begin{enumerate}[\rm(a)]
\item
$\scK$ satisfies the \emph{Action Formula} (AF) if
\begin{displaymath}
\sga(X\ast A)=\sg(X)\cap \sga(A),\, \forall X\in \scT,\, \forall A\in \scK.
\end{displaymath}
\item
$\scK$ satisfies the \emph{Internal-Hom Formula} (IHF) if
\begin{displaymath}
\cga([X,A]_\ast)=\cg([X,I_\scT])\cap \cga(A),\, \forall X \in \scT,\, \forall A\in \scK.
\end{displaymath}
(Recall that $\cg([X,I_\scT])=\sg(X)$.)
\end{enumerate}
\end{defn}

\begin{lem}\label{lem:obj-tpf}%
$\phantom{}$
\begin{enumerate}[\rm(a)]
\item
If $\scK$ satisfies the Action Formula, then $\Ker(\Gamma_s\ast -)$ is a prime localizing submodule, $\forall s\in S$. If $\scK$ is a conservative $\scT$-module, then the converse holds.
\item
If $\scK$ satisfies the Internal-Hom Formula, then $\Ker[\Gamma_s,-]_\ast$ is a $\shom$-prime colocalizing $\shom$-submodule, $\forall s\in S$. If $\scK$ is a conservative $\scT$-module, then the converse holds.
\end{enumerate}
\end{lem}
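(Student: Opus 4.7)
The plan is to treat both parts uniformly via the general schema: the formulas AF and IHF translate containment of $\Gamma_s \ast A$ or $[\Gamma_s, A]_\ast$ in the zero ideal into a disjunction of membership conditions, which is precisely the content of primeness; conversely, given primeness of the appropriate kernel, one argues that the only obstruction to the nontrivial inclusion $\sg(X) \cap \sga(A) \subseteq \sga(X \ast A)$ (resp.~$\cg([X,I_\scT]) \cap \cga(A) \subseteq \cga([X,A]_\ast)$) is exactly the disjunction afforded by primeness. Throughout, I would use~\Cref{lem:induced-sc} to know that these latter inclusions already hold, so only the reverse inclusion needs proof. The main technical ingredient for the converses is~\Cref{lem:ker-otl-otc}, which under conservativity identifies $\Ker(\Gamma_s \ast -)^{\ot\rmL} = \Ker[\Gamma_s,-]_\ast^{\ot\rmC}$ with the ordinary tensor-kernel $\Ker(\Gamma_s \ot -)$, thereby bridging the ``relative'' structure on $\scK$ with the ``intrinsic'' tensor-supports on $\scT$.

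For the forward implications, I would first observe that $\Ker(\Gamma_s \ast -)$ and $\Ker[\Gamma_s,-]_\ast$ are a localizing submodule and a colocalizing $\shom$-submodule, respectively, by formal properties of adjoints. For part (a), assume AF and suppose $X \ast A \in \Ker(\Gamma_s \ast -)$, i.e., $s \notin \sga(X \ast A) = \sg(X) \cap \sga(A)$. If $s \notin \sga(A)$ then $A \in \Ker(\Gamma_s \ast -)$; if $s \notin \sg(X)$ then $\Gamma_s \ot X = 0$, whence $\Gamma_s \ast (X \ast A') \cong (\Gamma_s \ot X) \ast A' = 0$ for every $A' \in \scK$, showing $X \in \Ker(\Gamma_s \ast -)^{\ot\rmL}$. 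For part (b), assume IHF and suppose $[X,A]_\ast \in \Ker[\Gamma_s,-]_\ast$, i.e., $s \notin \cga([X,A]_\ast) = \cg([X,I_\scT]) \cap \cga(A)$. Either $s \notin \cga(A)$, giving $A \in \Ker[\Gamma_s,-]_\ast$, or $s \notin \cg([X,I_\scT])$; in the latter case a second application of IHF to arbitrary $A' \in \scK$ shows $s \notin \cga([X,A']_\ast)$, so $X \in \Ker[\Gamma_s,-]_\ast^{\ot\rmC}$.

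For the converses, fix $s \in S$. For (a), assume $\scK$ is conservative and $\Ker(\Gamma_s \ast -)$ is prime; I need to verify $\sg(X) \cap \sga(A) \subseteq \sga(X \ast A)$ since the reverse is~\Cref{lem:induced-sc}(e). If $s$ lies in the intersection but $s \notin \sga(X \ast A)$, primeness yields $A \in \Ker(\Gamma_s \ast -)$ or $X \in \Ker(\Gamma_s \ast -)^{\ot\rmL}$. The first case contradicts $s \in \sga(A)$; for the second case, conservativity together with~\Cref{lem:ker-otl-otc}(b) gives $\Ker(\Gamma_s \ast -)^{\ot\rmL} = \Ker(\Gamma_s \ot -)$, so $\Gamma_s \ot X = 0$, contradicting $s \in \sg(X)$. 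Part (b) is entirely parallel: for $s \in \cg([X,I_\scT]) \cap \cga(A)$ with $[X,A]_\ast \in \Ker[\Gamma_s,-]_\ast$, $\shom$-primeness and~\Cref{lem:ker-otl-otc} reduce to $\Gamma_s \ot X = 0$ or $[\Gamma_s, A]_\ast = 0$; using that $I_\scT$ is a cogenerator of $\scT$, the identity $[\Gamma_s, [X,I_\scT]] \cong [\Gamma_s \ot X, I_\scT]$ ensures that $\Gamma_s \ot X = 0$ is equivalent to $s \notin \cg([X,I_\scT])$, and both alternatives contradict the hypothesis.

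The only subtle step, and the one I would double check first, is the converse argument where the disjunct ``$X$ belongs to the ${\ot\rmL}$ or ${\ot\rmC}$ enlargement of the kernel'' must be translated back into a statement about $\Gamma_s \ot X$ in $\scT$; this is precisely where conservativity enters via~\Cref{lem:ker-otl-otc}(b), and without it one obtains only the implication stated in~\Cref{lem:ker-otl-otc}(a), which is not strong enough to derive a contradiction. Everything else is a direct unwinding of the definitions and standard properties of $\sga$ and $\cga$ listed in~\Cref{lem:induced-sc}.
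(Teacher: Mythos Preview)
Your proposal is correct and follows essentially the same approach as the paper: both arguments hinge on \Cref{lem:ker-otl-otc} to pass between $\Ker(\Gamma_s\ast-)^{\ot\rmL}=\Ker[\Gamma_s,-]_\ast^{\ot\rmC}$ and $\Ker(\Gamma_s\ot-)$, with conservativity providing the reverse inclusion needed for the converse. The only cosmetic difference is that the paper rephrases IHF as the implication ``$[\Gamma_s\ot X,A]_\ast=0 \Rightarrow \Gamma_s\ot X=0$ or $[\Gamma_s,A]_\ast=0$'' and works directly with that, whereas you keep the set-equality formulation and invoke IHF a second time in the forward direction of (b); both routes are equivalent.
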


\begin{proof}
$\phantom{}$
\begin{enumerate}[\rm(a)]
\item
Similar to $(\rmb)$.
\item
The Internal-Hom Formula can be restated as follows: if $[\gG_s\ot X,A]_\ast=0$ then $\gG_s\ot X =0$ or $[\gG_s,A]_\ast=0$ --- the converse holds by the definition of cosupport. So, if $[X,A]_\ast \in \Ker[\gG_s,-]_\ast$, then $X\in \Ker(\gG_s\ot -)\subseteq \Ker[\gG_s,-]_\ast^{\ot \rmC}$ or $A\in \Ker[\gG_s,-]_\ast$; for the first alternative, see~\Cref{lem:ker-otl-otc}. This means that $\Ker[\gG_s,-]_\ast$ is $\shom$-prime. Now if $\Ker[\gG_s,-]_\ast$ is $\shom$-prime and $\scK$ is a conservative $\scT$-module, then $\Ker(\gG_s\ot -) = \Ker[\gG_s,-]_\ast^{\ot \rmC}$. Therefore, if $[\gG_s\ot X,A]_\ast=0$, then $\gG_s\ot X=0$ or $[\gG_s,A]_\ast=0$, which is precisely the statement of the Internal-Hom Formula.\qedhere
\end{enumerate}
\end{proof}

\begin{prop}
$\phantom{}$
\begin{enumerate}[\rm(a)]
\item
If $\scT$ satisfies minimality, then $\scK$ satisfies the Action Formula and (under~\Cref{hyp:tc3}) the Internal-Hom Formula.
\item
If $\scK$ is a conservative $\scT$-module and $\scK$ satisfies cominimality, then $\scK$ satisfies the Internal-Hom Formula.
\item
If $\scT$ satisfies the Internal-Hom Formula, then $\scT$ satisfies the Action Formula.
\end{enumerate}
\end{prop}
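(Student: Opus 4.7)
The approach is to verify each reverse inclusion (the forward one always follows from~\Cref{lem:induced-sc}) by testing against suitable kernels of functors on $\scT$, which one shows are localizing tensor-ideals of $\scT$. For part~$(\rma)$, fix $s \in \sg(X) \cap \sga(A)$. The subcategory $\Ker(- \ast A) \subseteq \scT$ is a localizing tensor-ideal because the associator $(Z \ot W) \ast A \cong Z \ast (W \ast A)$ makes it closed under tensoring. Since $\Gamma_s \ot X \neq 0$, the non-zero localizing tensor-ideal $\loct(\Gamma_s \ot X)$ sits inside the minimal ideal $\loct(\Gamma_s)$, so minimality forces equality. Were $(\Gamma_s \ot X) \ast A = 0$, the containment $\loct(\Gamma_s) \subseteq \Ker(- \ast A)$ would yield $\Gamma_s \ast A = 0$, contradicting $s \in \sga(A)$. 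For the IHF half of~$(\rma)$, the same scheme applies with $\Ker(- \ast A)$ replaced by $\Ker[-, A]_\ast$; this is a localizing tensor-ideal using~\Cref{hyp:tc3} (triangulatedness in the first variable), the fact that $[-, A]_\ast$ sends coproducts to products, and the hom-associator of~\Cref{lem:hom-associativity}.

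Part~$(\rmb)$ is the substantive step. By~\Cref{lem:obj-tpf}, under conservativity IHF is equivalent to $\Ker[\Gamma_s, -]_\ast$ being $\shom$-prime for every $s \in S$. Assume $[X, A]_\ast \in \Ker[\Gamma_s, -]_\ast$ and $A \notin \Ker[\Gamma_s, -]_\ast$; the goal is $X \in (\Ker[\Gamma_s, -]_\ast)^{\ot\rmC}$. A double application of the hom-associator together with $\Gamma_s \ot \Gamma_s \cong \Gamma_s$ shows that $[\Gamma_s, A]_\ast$ lies in $\Ker[\Gamma_s \ot X, -]_\ast$, and the latter is itself a colocalizing $\shom$-submodule of $\scK$ (the $\shom$-closure follows from the identity $[\Gamma_s \ot X, [Y, B]_\ast]_\ast \cong [Y, [\Gamma_s \ot X, B]_\ast]_\ast$). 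Since $[\Gamma_s, A]_\ast \neq 0$, cominimality forces $\coloch([\Gamma_s, A]_\ast) = \coloch([\Gamma_s, I_\scK]_\ast)$, and the containment in $\Ker[\Gamma_s \ot X, -]_\ast$ then gives $[\Gamma_s \ot X, I_\scK]_\ast = 0$. Because $I_\scK$ cogenerates $\scK$, this means $(\Gamma_s \ot X) \ast - \equiv 0$ on $\scK$, and conservativity yields $\Gamma_s \ot X = 0$, so $X \in \Ker(\Gamma_s \ot -) = (\Ker[\Gamma_s, -]_\ast)^{\ot\rmC}$ by~\Cref{lem:ker-otl-otc}.

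Part~$(\rmc)$ is a short computation: for $X, Y \in \scT$, applying the IHF for $\scT$ to the pair $(X, [Y, I])$ yields
\[
\sg(X \ot Y) = \cg([X \ot Y, I]) \cong \cg([X, [Y, I]]) = \sg(X) \cap \cg([Y, I]) = \sg(X) \cap \sg(Y),
\]
since $\sg(-) = \cg([-, I])$ and $[X \ot Y, I] \cong [X, [Y, I]]$ is the hom-associator of~\Cref{lem:hom-associativity}. The main obstacle lies in part~$(\rmb)$: verifying that $\Ker[\Gamma_s \ot X, -]_\ast$ really is a colocalizing $\shom$-submodule (not merely a colocalizing subcategory) is what licenses cominimality to equate $\coloch([\Gamma_s, A]_\ast)$ with $\coloch([\Gamma_s, I_\scK]_\ast)$, and the chained use of the hom-associator together with the idempotence of $\Gamma_s$ is where the argument requires care.
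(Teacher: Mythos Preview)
Your proof is correct and follows essentially the same approach as the paper. The only cosmetic difference is in part~(b): you route through \Cref{lem:obj-tpf} to reduce IHF to $\shom$-primeness of $\Ker[\Gamma_s,-]_\ast$, whereas the paper verifies the IHF inclusion directly by contradiction, but in either case the core computation is identical --- cominimality forces $\coloch([\Gamma_s,I_\scK]_\ast)\subseteq \Ker[\Gamma_s\ot X,-]_\ast$, whence $[\Gamma_s\ot X,-]_\ast=0$ and conservativity gives $\Gamma_s\ot X=0$.
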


\begin{proof}
Let $s\in S,\, X\in \scT,\, A\in \scK$.
\begin{enumerate}[\rm(a)]
\item
If $s\in \sg(X)\cap \sga(A)$, then $\Gamma_s\ot X\neq 0$ and $\Gamma_s \ast A\neq 0$. Since $\loct(\Gamma_s)$ is minimal, it follows that $\Gamma_s\in \loct(\Gamma_s \ot X)$. Hence, $\Gamma_s \ast A \in \loca((\Gamma_s \ot X)\ast A)$. Since $\Gamma_s \ast A\neq 0$, it holds that $\Gamma_s \ast (X \ast A) \cong (\Gamma_s \ot X)\ast A \neq 0$. In other words, $s \in \sga(X\ast A)$. Conclusion: $\scK$ satisfies AF.

Now suppose that $s \in \cg([X,I_\scT])\cap \cga(A)$. Then $\Gamma_s \ot X\neq 0$ and $[\Gamma_s,A]_\ast\neq 0$ (recall that $\cg([X,I_\scT])=\sg(X)$). Since $\loct(\Gamma_s)$ is minimal, it follows that $\Gamma_s\in \loct(\Gamma_s \ot X)$. Hence, $[\Gamma_s,A]_\ast \in \coloch([\Gamma_s\ot X,A]_\ast)$. It follows that $[\Gamma_s\ot X,A]_\ast\neq 0$, i.e., $s \in \cga([X,A]_\ast)$. Conclusion: $\scK$ satisfies IHF.
\item
Suppose that $s \in \cg([X,I_\scT])\cap \cga(A)$. Then $\Gamma_s \ot X\neq 0$ and $[\Gamma_s,A]_\ast\neq 0$. Aiming for contradiction, assume that $[\Gamma_s\ot X,A]_\ast=0$. Then $A\in \Ker[\Gamma_s \ot X,-]_\ast$. Since $0\neq [\Gamma_s,A]_\ast\in \coloch([\Gamma_s,I_\scK]_\ast)$, it follows by cominimality of $\scK$ that $\coloch([\Gamma_s,I_\scK]_\ast)=\coloch([\Gamma_s,A]_\ast)\subseteq \coloch(A)\subseteq \Ker[\Gamma_s \ot X,-]_\ast$. Thus, $[\Gamma_s\ot X,I_\scK]_\ast=[\Gamma_s\ot X,[\Gamma_s,I_\scK]_\ast]_\ast =0$. So, $[\gG_s\ot X,-]_\ast = 0$. By the $(\gG_s\ot X) \ast - \dashv [\gG_s \ot X,-]_\ast$ adjunction, it follows that $(\gG_s\ot X) \ast - =0$, i.e., $\Gamma_s\ot X\in \Ann_\scT(\scK)$. Since $\scK$ is a conservative $\scT$-module, $\Gamma_s\ot X=0$, which is a contradiction. Conclusion: $\scK$ satisfies IHF.
\item
Let $X,Y\in \scT$. Then $\sg(X\ot Y)=\cg([X\ot Y,I_\scT])=\cg([X,[Y,I_\scT]])= \cg([X,I_\scT])\cap \cg([Y,I_\scT])=\sg(X)\cap \sg(Y)$. Conclusion: IHF implies AF.\qedhere
\end{enumerate}
\end{proof}

\begin{rem}
If $\scK=\scT$, then the statement of the Action Formula is: $\sg(X\ot Y)=\sg(X)\cap \sg(Y),\ \forall X,Y\in \scT$. This is known as the Tensor Product Formula (which does not hold in general); see~\cite{BalmerFavi11,BarthelHeardSanders23}. See also~\cite{Balmer20} for a support theory that does satisfy the Tensor Product Formula. On the other hand, the Internal-Hom Formula states: $\cg([X,Y])=\sg(X)\cap \cg(Y),\ \forall X,Y\in \scT$. For the BIK support, this is equivalent to stratification of $\scT$; see~\cite[Theorem 9.5]{BensonIyengarKrause12}.
\end{rem}

\section{Smashing submodules}\label{sec:smash}

Let $\scK$ be a $\scT$-module. Recall our assumption that $\scK$ is compactly generated. A smashing submodule of $\scK$ is a smashing subcategory $\scM\subseteq \scK$ that is also a submodule. Specifically, the quotient functor $j_\scM\colon \scK\to \scK/\scM$ is a coproduct-preserving and essentially surjective triangulated functor that has a right adjoint $k_\scM\colon \scK/\scM \to \scK$ (which is necessarily fully faithful) that preserves coproducts --- and products since it is a right adjoint. By Brown representability, $k_\scM$ has a right adjoint $\ell_\scM\colon \scK\to \scK/\scM$ (which is necessarily essentially surjective) that preserves products. By the relations $j_\scM k_\scM\cong \Id \cong \ell_\scM k_\scM$, it follows that $j_\scM$ and $\ell_\scM$ take the same values on the image of $k_\scM$, which is $\scM^\perp$. The set of smashing submodules of~$\scK$ is denoted by $\Sma(\scK)$.

Next we describe the action of $\scT$ on $\scK/\scM$ induced by the action of $\scT$ on $\scK$. The category $\scT\times \scK/\scM$ is a triangulated category that is the quotient of $\scT\times \scK$ over $0\times \scM$, with the quotient functor $\scT\times \scK\to \scT\times \scK/\scM$ being $\Id_\scT\times j_\scM$. Since $0\times \scM$ is contained in the kernel of $j_\scM \circ \ast$, it follows that $j_\scM \circ \ast$ factors through $\scT\times \scK/\scM$ via a functor $\ast\colon \scT\times \scK/\scM \to \scK/\scM$. It is straightforward to check that this functor is an action of~$\scT$ on $\scK/\scM$. If $X\in \scT$ and $A=j_\scM(B)\in \scK/\scM$, then $X\ast A=j_\scM(X\ast B)$. The functor $j_\scM\colon \scK\to \scK/\scM$ is action-preserving. We denote by $[-,-]_\ast\colon \scT^\mathrm{op}\times \scK/\scM\to \scK/\scM$ the relative internal-hom of $\scK/\scM$. By~\Cref{lem:right-adj-action}, $k_\scM$ is action and $\shom$-preserving and $\ell_\scM$ is $\shom$-preserving. Moreover, since $I_\scK$ (the product of the Brown--Comenetz duals of the compact objects of $\scK$) is a pure-injective cogenerator of $\scK$ and $\ell_\scM$ is an essentially surjective right adjoint, it follows that $\ell_\scM(I_\scK)$ is a pure-injective cogenerator of $\scK/\scM$. In particular, $\scK/\scM=\coloc(\ell_\scM(I_\scK))$.

Now we describe the colocalizing $\shom$-submodules of $\scK/\scM$. The functor $k_\scM$ gives a bijective correspondence between the colocalizing subcategories of $\scK/\scM$ and the colocalizing subcategories of $\scK$ contained in $\scM^\perp$. Since $k_\scM$ is $\shom$-preserving, this bijection restricts to colocalizing $\shom$-submodules, i.e., the maps
\begin{equation}\label{eq:coloc-quotient}
\begin{tikzcd}
\Coloch(\scK/\scM) \rar["k_\scM",shift left] & \set{\scC\in \Coloch(\scK)}{\scC\subseteq \scM^\perp} \lar["k_\scM^{-1}",shift left]
\end{tikzcd}
\end{equation}
are mutually inverse inclusion-preserving bijections. An observation that will be useful in the sequel is that $k_\scM \coloch(j_\scM(A))=\coloch(k_\scM j_\scM (A)),\, \forall A\in \scK$.

Let $(\rms_\Gamma,\rmc_\Gamma)$ be a good support--cosupport pair on $\scT$. We denote the induced support--cosupport on $\scK/\scM$ by $(\rms^\scM_\Gamma,\rmc^\scM_\Gamma)$. Specifically,
\begin{align*}
\rms^\scM_\Gamma(j_\scM(A))&=\set{s\in S}{\mathmakebox[\widthof{$[\Gamma_s,j_\scM(A)]_\ast\neq 0$}]{j_\scM(\gG_s\ast A)\neq 0}},
\\
\rmc^\scM_\Gamma(j_\scM(A))&=\set{s\in S}{[\Gamma_s,j_\scM(A)]_\ast\neq 0}.
\end{align*}
Then $\scK/\scM$ satisfies the colocal-to-global principle if
\[
\coloch(j_\scM(A))=\coloch\parens{[\gG_s,j_\scM(A)]_\ast}{s\in S},\ \forall A\in \scK
\]
and $\scK/\scM$ satisfies cominimality if $\coloch([\gG_s,\ell_\scM(I_\scK)]_\ast)$ is a minimal colocalizing $\shom$-submodule of $\scK/\scM$, for all $s\in S$. Finally, let $S_\scM=\set{s\in S}{[\gG_s,I_\scK]_\ast \in \scM^\perp}$.

\begin{prop}\label{prop:coltg-local}
Let $\scM\in \Sma(\scK)$. The following are equivalent:
\begin{enumerate}[\rm(a)]
\item
$\scK/\scM$ satisfies the colocal-to-global principle.
\item
$\coloch(B)=\coloch\parens{[\Gamma_s,B]_\ast}{s\in S},\, \forall B\in \scM^\perp$.
\end{enumerate}
As a result, if $\scK$ satisfies the colocal-to-global principle, then $\scK/\scM$ satisfies the colocal-to-global principle.
\end{prop}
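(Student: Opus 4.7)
The plan is to transport the colocal-to-global equation back and forth between $\scK/\scM$ and $\scM^\perp\subseteq \scK$ via the bijection~\eqref{eq:coloc-quotient} given by $k_\scM$. Two preliminary observations will be used throughout: first, for $B\in \scM^\perp$ we have $B\cong k_\scM j_\scM B$, since $j_\scM k_\scM\cong \Id$ and $k_\scM$ is fully faithful with essential image $\scM^\perp$; second, $\scM^\perp$ is closed under $[X,-]_\ast$ for every $X\in \scT$, since for $M\in \scM$ the action--hom adjunction gives $\Hom_\scK(M,[X,B]_\ast)\cong \Hom_\scK(X\ast M,B)=0$, using that $\scM$ is a submodule so $X\ast M\in \scM$. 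Consequently, whenever $B\in \scM^\perp$, both $\coloch(B)$ and $\coloch\parens{[\gG_s,B]_\ast}{s\in S}$ are colocalizing $\shom$-submodules of $\scK$ contained in $\scM^\perp$, hence lie in the range of the bijection~\eqref{eq:coloc-quotient}.

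For (a)~$\Rightarrow$~(b), I would fix $B\in \scM^\perp$, apply the colocal-to-global equation of (a) to the object $j_\scM B\in \scK/\scM$, and then push the resulting equality through $k_\scM$. The identity $k_\scM\coloch(j_\scM B)=\coloch(k_\scM j_\scM B)=\coloch(B)$ recorded just before~\eqref{eq:coloc-quotient} (extended from a single object to a collection by the same bijection argument) rewrites the left-hand side, while the $\shom$-preservation isomorphism $k_\scM[\gG_s,j_\scM B]_\ast\cong [\gG_s,k_\scM j_\scM B]_\ast=[\gG_s,B]_\ast$, supplied by~\Cref{lem:right-adj-action}, rewrites the right-hand side. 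This yields precisely the equation in~(b).

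For (b)~$\Rightarrow$~(a), I would take an arbitrary $A\in \scK$ and set $B=k_\scM j_\scM A\in \scM^\perp$, so that $j_\scM B\cong j_\scM A$. The two colocalizing $\shom$-submodules $\coloch(j_\scM A)$ and $\coloch\parens{[\gG_s,j_\scM A]_\ast}{s\in S}$ of $\scK/\scM$ are carried under $k_\scM$ to $\coloch(B)$ and $\coloch\parens{[\gG_s,B]_\ast}{s\in S}$ respectively (by the same two identities as above), and these coincide by hypothesis~(b). Injectivity of the bijection~\eqref{eq:coloc-quotient} then forces the two colocalizing $\shom$-submodules of $\scK/\scM$ to be equal, establishing~(a). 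The final assertion is then immediate: if $\scK$ satisfies the colocal-to-global principle, then~(b) holds in particular for every $B\in \scM^\perp$, so $\scK/\scM$ satisfies the colocal-to-global principle. The only delicate point in all of this is the closure of $\scM^\perp$ under $[X,-]_\ast$, which the submodule hypothesis on $\scM$ supplies.
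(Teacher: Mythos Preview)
Your proposal is correct and follows essentially the same approach as the paper: both arguments set up the correspondence between $\coloch(j_\scM A)$, $\coloch\parens{[\gG_s,j_\scM A]_\ast}{s\in S}$ in $\scK/\scM$ and $\coloch(k_\scM j_\scM A)$, $\coloch\parens{[\gG_s,k_\scM j_\scM A]_\ast}{s\in S}$ in $\scM^\perp$ via the bijection~\eqref{eq:coloc-quotient} and the $\shom$-preservation of $k_\scM$, then read off the equivalence and the final consequence. Your explicit verification that $\scM^\perp$ is closed under $[X,-]_\ast$ (needed so that $\scD_2$ lies in the range of~\eqref{eq:coloc-quotient}) is a point the paper leaves implicit, but the overall logic is identical.
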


\begin{proof}
Let $A$ be an object of $\scK$ and set
\begin{align*}
\scC_1&=\coloch(j_\scM(A)),
\\
\scC_2&=\coloch\parens{[\gG_s,j_\scM(A)]_\ast}{s\in S},
\\
\scD_1&=\coloch(k_\scM j_\scM (A)),
\\
\scD_2&=\coloch\parens{[\gG_s,k_\scM j_\scM (A)]_\ast}{s\in S}.
\end{align*}
Under the bijection~\eqref{eq:coloc-quotient}, $\scC_1$ corresponds to $\scD_1$, while $\scC_2$ corresponds to $\scD_2$ (recall that $k_\scM$ is $\shom$-preserving). So, if $\scK/\scM$ satisfies the colocal-to-global principle, then $\scC_1=\scC_2$. Hence, $\scD_1=\scD_2$. Since $\im k_\scM j_\scM=\scM^\perp$, $(\rmb)$ follows. On the other hand, if $(\rmb)$ holds, then $\scD_1=\scD_2$. As a result, $\scC_1=\scC_2$, i.e., $\scK/\scM$ satisfies the colocal-to-global principle. This proves $(\rma)$.

If $\scK$ satisfies the colocal-to-global principle, then
\[
\coloch(A)=\coloch\parens{[\Gamma_s,A]_\ast}{s\in S},\, \forall A\in \scK,
\]
so the equality certainly holds for $A\in \scM^\perp$. Therefore, $\scK/\scM$ satisfies the colocal-to-global principle by the equivalence $(\rma)\Leftrightarrow (\rmb)$.
\end{proof}

\begin{prop}\label{prop:comin-local}
Suppose that $s\in S_\scM$. Then $\coloch([\Gamma_s,I_\scK]_\ast)$ is a minimal colocalizing $\shom$-submodule of $\scK$ if and only if $\coloch([\Gamma_s,\ell_\scM(I_\scK)]_\ast)$ is a minimal colocalizing $\shom$-submodule of $\scK/\scM$.
\end{prop}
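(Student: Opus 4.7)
The plan is to transfer the minimality condition across the bijection~\eqref{eq:coloc-quotient} by identifying $\coloch([\gG_s,\ell_\scM(I_\scK)]_\ast)\in\Coloch(\scK/\scM)$ as the element corresponding to $\coloch([\gG_s,I_\scK]_\ast)\in\Coloch(\scK)$. The hypothesis $s\in S_\scM$ means $[\gG_s,I_\scK]_\ast\in\scM^\perp$, so $\coloch([\gG_s,I_\scK]_\ast)\subseteq\scM^\perp$, which is precisely what places this colocalizing $\shom$-submodule in the domain of $k_\scM^{-1}$ and makes the bijection applicable.

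The first step is to establish the isomorphism $k_\scM[\gG_s,\ell_\scM(I_\scK)]_\ast\cong[\gG_s,I_\scK]_\ast$. Since $\ell_\scM$ is $\shom$-preserving by~\Cref{lem:right-adj-action}, one has $[\gG_s,\ell_\scM(I_\scK)]_\ast\cong\ell_\scM[\gG_s,I_\scK]_\ast$. Because $[\gG_s,I_\scK]_\ast\in\scM^\perp$, the functors $j_\scM$ and $\ell_\scM$ agree on it, and the counit $k_\scM j_\scM\to\Id$ is an isomorphism on $\scM^\perp$; thus $k_\scM\ell_\scM[\gG_s,I_\scK]_\ast\cong k_\scM j_\scM[\gG_s,I_\scK]_\ast\cong[\gG_s,I_\scK]_\ast$. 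Combining this with the observation $k_\scM\coloch(j_\scM(A))=\coloch(k_\scM j_\scM(A))$ (applied to any $A\in\scK$ with $j_\scM(A)\cong[\gG_s,\ell_\scM(I_\scK)]_\ast$, available by essential surjectivity of $j_\scM$) yields
\[
k_\scM\coloch\big([\gG_s,\ell_\scM(I_\scK)]_\ast\big)=\coloch\big([\gG_s,I_\scK]_\ast\big),
\]
so the two are matched under~\eqref{eq:coloc-quotient}.

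The second step is to transport minimality. Since~\eqref{eq:coloc-quotient} and its inverse are inclusion-preserving and exchange the zero subcategories, the bijection restricts to an order-isomorphism between the poset of colocalizing $\shom$-submodules contained in $\coloch([\gG_s,\ell_\scM(I_\scK)]_\ast)$ and the poset of colocalizing $\shom$-submodules contained in $\coloch([\gG_s,I_\scK]_\ast)$. A non-zero proper colocalizing $\shom$-submodule witnessing non-minimality on one side therefore transports to such a witness on the other side; applying this in both directions yields the stated equivalence.

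No step appears delicate: the content of the proposition is a transfer statement, and the only mild bookkeeping point is commuting $\coloch$ past $k_\scM$, justified by the observation recorded immediately before~\eqref{eq:coloc-quotient} together with the $\shom$-preservation of both $k_\scM$ and $\ell_\scM$ from~\Cref{lem:right-adj-action}. The hypothesis $s\in S_\scM$ is what guarantees that $\coloch([\gG_s,I_\scK]_\ast)$ sits inside $\scM^\perp$, so that the identification in Step~1 is meaningful and Step~2 goes through without caveat.
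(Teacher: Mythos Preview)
Your proof is correct and follows essentially the same approach as the paper: both use that $[\gG_s,I_\scK]_\ast\in\scM^\perp$ (from $s\in S_\scM$) together with the $\shom$-preservation of $\ell_\scM$ to identify $[\gG_s,\ell_\scM(I_\scK)]_\ast\cong\ell_\scM[\gG_s,I_\scK]_\ast\cong j_\scM[\gG_s,I_\scK]_\ast$, and then transfer minimality across the inclusion-preserving bijection~\eqref{eq:coloc-quotient}. Your version is slightly more explicit about the bookkeeping (invoking essential surjectivity of $j_\scM$ and the observation $k_\scM\coloch(j_\scM(A))=\coloch(k_\scM j_\scM(A))$), but the substance is identical.
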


\begin{proof}
Since $s\in S_\scM$, it holds that $[\gG_s,I_\scK]_\ast \in \scM^\perp$. Therefore, $[\gG_s,\ell_\scM(I_\scK)]_\ast\cong \ell_\scM[\gG_s,I_\scK]_\ast\cong j_\scM[\gG_s,I_\scK]_\ast$. So, under the bijection~\eqref{eq:coloc-quotient}, $\coloch([\gG_s,\ell_\scM(I_\scK)]_\ast)$ corresponds to $\coloch(k_\scM j_\scM [\gG_s,I_\scK]_\ast)=\coloch([\gG_s,I_\scK]_\ast)$, with the last equality again because $[\gG_s,I_\scK]_\ast\in \scM^\perp$. Consequently, $\coloch([\Gamma_s,I_\scK]_\ast)$ is minimal if and only if $\coloch([\Gamma_s,\ell_\scM(I_\scK)]_\ast)$ is minimal.
\end{proof}

Combining~\Cref{prop:coltg-local},~\Cref{prop:comin-local} and~\Cref{thm:costrat-coltg-comin}, we obtain the following result.

\begin{thm}\label{thm:comin-local}
Let $\{\scM_s\}_{s\in S}$ be a collection of smashing submodules of $\scK$ such that $s\in S_{\scM_s}$, for all $s\in S$. Then:
\begin{enumerate}[\rm(a)]
\item
$\scK$ satisfies cominimality if and only if $\scK/\scM_s$ satisfies cominimality, for all $s \in S$.
\item
Suppose that $\scK$ satisfies the colocal-to-global principle. Then $\scK$ is costratified if and only if $\scK/\scM_s$ is costratified, for all $s\in S$.
\end{enumerate}
\end{thm}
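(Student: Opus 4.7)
My plan is to deduce (b) formally from (a) by combining \Cref{thm:costrat-coltg-comin} (costratification is equivalent to the colocal-to-global principle plus cominimality) with \Cref{prop:coltg-local} (descent of the colocal-to-global principle from $\scK$ to each $\scK/\scM_s$). The essential content therefore lies in part (a), which is in turn a pointwise application of \Cref{prop:comin-local}.

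For the backward direction of (a), I would assume each $\scK/\scM_s$ is cominimal. Evaluating at the distinguished point $s$ gives that $\coloch([\gG_s,\ell_{\scM_s}(I_\scK)]_\ast)$ is minimal in $\scK/\scM_s$. Since the hypothesis $s\in S_{\scM_s}$ is exactly what \Cref{prop:comin-local} requires, this minimality transfers across the adjunction to yield minimality of $\coloch([\gG_s,I_\scK]_\ast)$ in $\scK$. Letting $s$ range over $S$ gives cominimality of $\scK$.

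For the forward direction of (a), I fix $s\in S$ and must show that $\coloch([\gG_t,\ell_{\scM_s}(I_\scK)]_\ast)$ is minimal in $\scK/\scM_s$ for every $t\in S$. When $t\in S_{\scM_s}$, \Cref{prop:comin-local} directly transfers minimality from $\coloch([\gG_t,I_\scK]_\ast)$ in $\scK$, which holds by cominimality of $\scK$. For a general $t$, I would work through the bijection~\eqref{eq:coloc-quotient}: a non-zero proper colocalizing $\shom$-submodule of $\coloch([\gG_t,\ell_{\scM_s}(I_\scK)]_\ast)$ in $\scK/\scM_s$ corresponds under $k_{\scM_s}$ to such a submodule of $\scK$ contained in $\scM_s^\perp\cap\coloch([\gG_t,k_{\scM_s}\ell_{\scM_s}(I_\scK)]_\ast)$, and the counit $k_{\scM_s}\ell_{\scM_s}(I_\scK)\to I_\scK$ together with cominimality of $\scK$ at the point $t$ should force any such submodule to be trivial.

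For (b), the colocal-to-global hypothesis on $\scK$ combined with \Cref{prop:coltg-local} ensures that each $\scK/\scM_s$ inherits this principle. \Cref{thm:costrat-coltg-comin} applied to $\scK$ and to each $\scK/\scM_s$ identifies costratification with cominimality in both settings, and the equivalence from part (a) completes the proof. I expect the main obstacle to be the case $t\notin S_{\scM_s}$ in the forward direction of (a): since \Cref{prop:comin-local} is only available at distinguished points, handling a general $t$ requires a careful analysis via~\eqref{eq:coloc-quotient} of the coreflection $k_{\scM_s}\ell_{\scM_s}(I_\scK)$ of $I_\scK$ into $\scM_s^\perp$ and its interaction with the relative internal-hom.
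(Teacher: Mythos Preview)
Your overall structure matches the paper's proof exactly: part (a) via \Cref{prop:comin-local} applied pointwise, and part (b) by combining (a) with \Cref{prop:coltg-local} and \Cref{thm:costrat-coltg-comin}. You are in fact more careful than the paper on the forward direction of (a): the paper's one-line argument only verifies minimality of $\coloch([\Gamma_s,\ell_{\scM_s}(I_\scK)]_\ast)$ in $\scK/\scM_s$ at the single point $t=s$, whereas cominimality of $\scK/\scM_s$ requires it at every $t\in S$.

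Your sketch for general $t$ is correct and the anticipated obstacle is illusory. Since $\scM_s$ is a submodule, $\scM_s^\perp$ is a colocalizing $\shom$-submodule of $\scK$; hence $[\Gamma_t,k_{\scM_s}\ell_{\scM_s}(I_\scK)]_\ast$ lies in $\scM_s^\perp$ and, by \Cref{rem:comin-expanded}, also in $\coloch([\Gamma_t,I_\scK]_\ast)$. If it vanishes there is nothing to prove; otherwise cominimality of $\scK$ at $t$ forces $\coloch([\Gamma_t,k_{\scM_s}\ell_{\scM_s}(I_\scK)]_\ast)=\coloch([\Gamma_t,I_\scK]_\ast)$, so in particular $[\Gamma_t,I_\scK]_\ast\in\scM_s^\perp$, i.e.\ $t\in S_{\scM_s}$ after all, and \Cref{prop:comin-local} applies directly. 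Either way the bijection~\eqref{eq:coloc-quotient} transfers minimality to $\scK/\scM_s$, so no separate analysis of the coreflection is needed.
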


\begin{proof}
Since $s\in S_{\scM_s}$, for all $s\in S$, by~\Cref{prop:comin-local} $\coloch([\Gamma_s,I_\scK]_\ast)$ is a minimal colocalizing $\shom$-submodule of $\scK$ if and only if $\coloch([\Gamma_s,\ell_{\scM_s}(I_\scK)]_\ast)$ is a minimal colocalizing $\shom$-submodule of $\scK/\scM_s$, for all $s\in S$. In other words, $\scK$ satisfies cominimality if and only if $\scK/\scM_s$ satisfies cominimality, for all $s\in S$. This proves $(\rma)$. If $\scK$ satisfies the colocal-to-global principle, then by~\Cref{prop:coltg-local}, it follows that $\scK/\scM_s$ satisfies the colocal-to-global principle, for all $s\in S$. Statement $(\rmb)$ now follows from $(\rma)$ and~\Cref{thm:costrat-coltg-comin}.
\end{proof}

We will apply~\Cref{thm:comin-local} to the case $\scK=\scT$, $\ast=\ot$, $S=\Spcs$ and $(\sg,\cg)=(\Supps,\Cosupps)$ (under~\Cref{hyp:hyp-spatial} and provided that $\Spcs$ is $T_D$). In this case, if $P\in \Spcs$, then $S_P=\set{Q\in \Spcs}{[\gG_Q,I]\in P^\perp}$. Since $\gG_P=e_\scS\ot f_P$, for some $\scS\in \Sm(\scT)$, and $P^\perp=\im[f_P,-]$, it follows that $[\gG_P,I]=[e_\scS\ot f_P,I]\cong [f_P,[e_\scS,I]]\in P^\perp$. In other words, $P\in S_P$. This leads to the following result:

\begin{cor}\label{cor:comin-local-smash}
Suppose that $\Spcs$ is $T_D$. Then:
\begin{enumerate}[\rm(a)]
\item
$\scT$ satisfies cominimality if and only if $\scT/P$ satisfies cominimality, for all $P\in \Spcs$.
\item
Suppose that $\scT$ satisfies the colocal-to-global principle. Then $\scT$ is costratified if and only if $\scT/P$ is costratified, for all $P\in \Spcs$.
\end{enumerate}
\end{cor}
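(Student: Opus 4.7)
The plan is to derive both parts directly from \Cref{thm:comin-local} by specializing to $\scK=\scT$, $\ast=\ot$, $S=\Spcs$ and $(\sg,\cg)=(\Supps,\Cosupps)$. The setup assumes $\Spcs$ is $T_D$, which is exactly what is needed for the small smashing support--cosupport pair to be defined and to be good in the sense of \Cref{defn:good-supp}. The family of smashing submodules I would use is $\scM_P \coloneqq P$ for each $P\in \Spcs$, since every meet-prime smashing ideal is in particular a smashing tensor-ideal, hence a smashing submodule of $\scT$ in the self-action.

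The one nontrivial input required by \Cref{thm:comin-local} is the membership $P\in S_{\scM_P}$, i.e., $[\gG_P,I]\in P^\perp$. This is already recorded in the paragraph preceding the corollary: since $P$ is locally closed, one writes $\gG_P=e_\scS\ot f_P$ for some $\scS\in \Sm(\scT)$, and using $P^\perp=\im[f_P,-]$ together with the hom-associator of \Cref{lem:hom-associativity}, one gets $[\gG_P,I]=[e_\scS\ot f_P,I]\cong [f_P,[e_\scS,I]]\in P^\perp$. So the hypothesis of \Cref{thm:comin-local} is verified for the family $\{\scM_P\}_{P\in \Spcs}$.

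With this in hand, part $(\rma)$ follows from \Cref{thm:comin-local}~$(\rma)$ applied to the family $\{\scM_P\}_{P\in \Spcs}$, and part $(\rmb)$ follows from \Cref{thm:comin-local}~$(\rmb)$, which additionally requires the colocal-to-global principle that is explicitly assumed. No further work is required; the entire corollary is a specialization.

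There is no real obstacle here, as the setup has been arranged so that the corollary becomes an immediate instantiation. The only point that needs care is confirming that when $\scK=\scT$ and $\ast=\ot$, the notion of smashing submodule agrees with smashing tensor-ideal (so that each $P\in \Spcs$ is a valid choice of $\scM_P$) and that the induced support--cosupport on the quotient $\scT/P$ matches the notation $\scT/P$ as a big tt-category, both of which are standard facts about smashing localizations of rigidly-compactly generated tt-categories used throughout the paper.
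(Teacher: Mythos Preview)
Your proposal is correct and follows essentially the same approach as the paper: specialize \Cref{thm:comin-local} to $\scK=\scT$, $\ast=\ot$, $S=\Spcs$, $(\sg,\cg)=(\Supps,\Cosupps)$ with the family $\scM_P=P$, and verify the hypothesis $P\in S_P$ via $[\gG_P,I]=[e_\scS\ot f_P,I]\cong[f_P,[e_\scS,I]]\in P^\perp$. The paper's proof simply cites \Cref{thm:comin-local} together with this preceding discussion, which is exactly what you have written out.
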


\begin{proof}
The result is a direct consequence of~\Cref{thm:comin-local}, taking into account the preceding discussion.
\end{proof}

\begin{cor}\label{cor:comin-local-smash-covers}
Suppose that $\Spcs$ is $T_D$ and that $\Spcs=\bigcup_{j\in J}V_{\scS_j}$ is a cover of $\Spcs$ by closed subsets. If $\scT/\scS_j$ satisfies cominimality, for all $j\in J$, then $\scT$ satisfies cominimality. If, moreover, $\scT$ satisfies the colocal-to-global principle, then $\scT$ is costratified.
\end{cor}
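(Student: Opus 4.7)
The plan is to reduce the statement to \Cref{cor:comin-local-smash} by exploiting the compatibility of the smashing spectrum with smashing localizations. Specifically, I would fix $P \in \Spcs$ and use the cover to find some $j \in J$ with $P \in V_{\scS_j}$, i.e., $\scS_j \subseteq P$. Since $\scS_j$ is smashing, the smashing ideals of $\scT/\scS_j$ are in bijection with the smashing ideals of $\scT$ containing $\scS_j$, and under this bijection meet-primeness is preserved; in particular $P$ corresponds to a meet-prime smashing ideal $\bar{P}$ of $\scT/\scS_j$, with $(\scT/\scS_j)/\bar{P} \cong \scT/P$. Moreover, $\Spcs(\scT/\scS_j)$ is homeomorphic to the closed subspace $V_{\scS_j} \subseteq \Spcs$, hence is $T_D$ (since the $T_D$ property passes to subspaces: a locally closed point in $\Spcs$ remains locally closed in any subspace containing it), and $\Sm(\scT/\scS_j)$ is a spatial frame as it is the principal filter above $\scS_j$ in $\Sm(\scT)$.

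With these observations in place, I would apply \Cref{cor:comin-local-smash} to the big tt-category $\scT/\scS_j$: since by hypothesis $\scT/\scS_j$ satisfies cominimality, the corollary (part (a)) applied within $\scT/\scS_j$ yields that $(\scT/\scS_j)/\bar{P}$ satisfies cominimality for every $\bar{P} \in \Spcs(\scT/\scS_j)$. Taking $\bar{P}$ to be the image of $P$, we conclude that $\scT/P$ satisfies cominimality. Since $P \in \Spcs$ was arbitrary and the cover exhausts all meet-prime smashing ideals, a second application of \Cref{cor:comin-local-smash}(a), now to $\scT$ itself, gives that $\scT$ satisfies cominimality.

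For the final clause, assuming additionally that $\scT$ satisfies the colocal-to-global principle, \Cref{thm:costrat-coltg-comin}\eqref{thm:ccc-b} immediately yields that $\scT$ is costratified. (Alternatively, one may invoke \Cref{cor:comin-local-smash}(b), noting that the colocal-to-global principle passes to each smashing quotient $\scT/P$ by \Cref{prop:coltg-local}, and that each $\scT/P$ is costratified by its cominimality combined with \Cref{prop:coltg-local} and \Cref{thm:costrat-coltg-comin}.)

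The main obstacle is the bookkeeping around the smashing quotient $\scT/\scS_j$: verifying that the hypotheses needed to invoke \Cref{cor:comin-local-smash} (namely \Cref{hyp:hyp-spatial} and the $T_D$ condition on $\Spcs(\scT/\scS_j)$) genuinely transfer from $\scT$, and that the identification $(\scT/\scS_j)/\bar{P} \cong \scT/P$ intertwines the smashing-spectrum data correctly so that $\bar{P}$ really is a meet-prime smashing ideal of $\scT/\scS_j$. Once these points are checked, the reduction to \Cref{cor:comin-local-smash} is formal.
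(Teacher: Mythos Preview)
Your proposal is correct and follows essentially the same route as the paper: pick $P\in\Spcs$, use the cover to find $\scS_j\subseteq P$, identify $(\scT/\scS_j)/\bar P\simeq\scT/P$, apply \Cref{cor:comin-local-smash} to $\scT/\scS_j$ and then to $\scT$, and conclude costratification via \Cref{thm:costrat-coltg-comin}. Your additional care in verifying that the spatial-frame and $T_D$ hypotheses transfer to $\scT/\scS_j$ is well placed---the paper's proof tacitly assumes these points.
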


\begin{proof}
Let $P\in \Spcs$. Then $P\in V_{\scS_j}$, for some $j\in J$. This means that $\scS_j\subseteq P$. Let $j_{\scS_j}\colon \scT\to \scT/\scS_j$ be the quotient functor. Then $j_{\scS_j}(P)$ is a smashing ideal of $\scT/\scS_j$ such that $(\scT/\scS_j)/j_{\scS_j}(P)\simeq \scT/P$. Since $\scT/\scS_j$ satisfies cominimality, it follows by~\Cref{cor:comin-local-smash} that $\scT/P$ satisfies cominimality. Since this is true for all $P\in \Spcs$, again by~\Cref{cor:comin-local-smash}, we conclude that $\scT$ satisfies cominimality. The ``moreover'' part follows by~\Cref{thm:costrat-coltg-comin}.
\end{proof}

Essentially via the same arguments (left to the reader) one obtains the analogous results for the Balmer spectrum and the Balmer--Favi support. What one needs to note for~\Cref{cor:comin-covers-compacts} is that, compared to $\Spcs$ where the smashing ideals stand in bijection with open subsets of $\Spcs$ (thus closed covers of $\Spcs$ are necessary) the thick ideals of $\scT^\rmc$ --- and by extension the compactly generated smashing ideals of $\scT$ --- stand in bijection with Thomason subsets of $\Spc$; hence, a cover by complements of Thomason subsets is what is needed. 

\begin{cor}\label{cor:comin-local-bf}
Suppose that every point of $\Spc$ is visible. Then:
\begin{enumerate}[\rm(a)]
\item
$\scT$ satisfies cominimality if and only if $\scT/\loct(\mfp)$ satisfies cominimality, for all $\mfp\in \Spc$.
\item
Suppose that $\scT$ satisfies the colocal-to-global principle. Then $\scT$ is costratified if and only if $\scT/\loct(\mfp)$ is costratified, for all $\mfp\in \Spc$.
\end{enumerate}
\end{cor}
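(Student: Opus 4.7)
The plan is to derive \Cref{cor:comin-local-bf} as a direct application of \Cref{thm:comin-local}, mirroring the proof of \Cref{cor:comin-local-smash} for the smashing spectrum. Concretely, I take $\scK = \scT$, $\ast = \ot$, $S = \Spc$, and the good support--cosupport pair to be the Balmer--Favi pair $(\Supp,\Cosupp)$, with $\Gamma_\mfp = g_\mfp$. This is a good pair in the sense of \Cref{defn:good-supp} because the tensor-idempotents $\{g_\mfp\}_{\mfp \in \Spc}$ are pairwise-orthogonal and satisfy $g_\mfp \ot g_\mfp \cong g_\mfp$, as recalled in \Cref{sec:prelim}. The assignment $\mfp \mapsto \scM_\mfp$ will be $\scM_\mfp = \loct(\mfp)$, which is a smashing ideal (hence a smashing submodule) since it is compactly generated by the thick tensor-ideal $\mfp \subseteq \scT^\rmc$.

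The key verification is that $\mfp \in S_{\scM_\mfp}$, i.e., $[g_\mfp, I] \in \loct(\mfp)^\perp$. By the tensor-hom adjunction and the fact that $I$ is a cogenerator of $\scT$, this reduces to showing that $X \ot g_\mfp = 0$ for every $X \in \loct(\mfp)$, equivalently $\mfp \notin \Supp(X)$ for all such $X$. For a compact $x \in \mfp \subseteq \scT^\rmc$, the Balmer--Favi support agrees with the Balmer support of $x$, and by the definition of $\mfp$ as a prime of $\scT^\rmc$ one has $\mfp \in \supp(x)$ iff $x \notin \mfp$; hence $\mfp \notin \Supp(x)$ for every $x \in \mfp$. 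The closure properties of $\Supp$ under suspensions, coproducts, and triangles (recalled in the excerpt) then propagate this vanishing to all of $\loct(\mfp)$, completing the verification.

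With this in place, \Cref{thm:comin-local}(a) yields part (a) of the corollary: $\scT$ satisfies cominimality if and only if $\scT/\loct(\mfp)$ does for every $\mfp \in \Spc$. For part (b), if $\scT$ satisfies the colocal-to-global principle then so does each $\scT/\loct(\mfp)$ by \Cref{prop:coltg-local}, and \Cref{thm:comin-local}(b) together with \Cref{thm:costrat-coltg-comin} then gives the costratification equivalence. The only real obstacle is the identification $\mfp \in S_{\loct(\mfp)}$, which is morally the statement that $g_\mfp$ annihilates precisely the ideal $\loct(\mfp)$ that the Balmer--Favi construction is designed to localize away from; everything else is a formal application of the machinery already assembled in \Cref{sec:smash}.
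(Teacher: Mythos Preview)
Your proposal is correct and follows exactly the template the paper intends: it is a direct application of \Cref{thm:comin-local} with $\scK=\scT$, $S=\Spc$, $\Gamma_\mfp=g_\mfp$, and $\scM_\mfp=\loct(\mfp)$, mirroring the proof of \Cref{cor:comin-local-smash} as the paper explicitly instructs. Your verification of $\mfp\in S_{\loct(\mfp)}$ via the vanishing $g_\mfp\ot x=0$ for compact $x\in\mfp$ (using that Balmer--Favi support agrees with Balmer support on compacts) is a clean alternative to the idempotent manipulation $[\Gamma_P,I]\cong[f_P,[e_\scS,I]]\in P^\perp$ used in the smashing case, and arguably more transparent since it avoids having to argue that the Thomason subset $W$ in the visibility decomposition can be chosen so that $\scT_W=\loct(\mfp)$.
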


\begin{cor}\label{cor:comin-covers-compacts}
Suppose that every point of $\Spc$ is visible and that $\Spc=\bigcup_{j\in J}U_j$ is a cover of $\Spc$ by complements of Thomason subsets. Let $V_j$ be the complement of $U_j$. If $\scT(U_j)$ satisfies cominimality, for all $j\in J$, then $\scT$ satisfies cominimality. If, moreover, $\scT$ satisfies the colocal-to-global principle, then $\scT$ is costratified.
\end{cor}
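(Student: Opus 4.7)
The plan is to mirror the argument of \Cref{cor:comin-local-smash-covers}, but invoking \Cref{cor:comin-local-bf} in place of \Cref{cor:comin-local-smash}. The first step is to fix $\mfp\in \Spc$ and choose an index $j\in J$ such that $\mfp\in U_j$. Since $U_j$ is the complement of the Thomason subset $V_j$, the condition $\mfp\notin V_j$ translates, under the Thomason--Balmer correspondence, into $\scT^\rmc_{V_j}\subseteq \mfp$, and hence $\scT_{V_j}=\loct(\scT^\rmc_{V_j})\subseteq \loct(\mfp)$.

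With this containment in hand, the quotient $\scT/\loct(\mfp)$ factors through $\scT(U_j)=\scT/\scT_{V_j}$. Let $\bar\mfp$ denote the image of $\mfp$ under the identification $\mathrm{Spc}(\scT(U_j)^\rmc)\cong U_j$; one obtains an equivalence of big tt-categories $\scT/\loct(\mfp)\simeq \scT(U_j)/\loct(\bar\mfp)$. I would also note that visibility of $\bar\mfp$ in $\mathrm{Spc}(\scT(U_j)^\rmc)\cong U_j$ follows immediately from visibility of $\mfp$ in $\Spc$: if $\{\mfp\}=V\cap(\Spc\setminus W)$ with $V,W$ Thomason in $\Spc$, then $\{\bar\mfp\}=(V\cap U_j)\cap(U_j\setminus(W\cap U_j))$ is the analogous presentation in $U_j$, so the hypotheses of \Cref{cor:comin-local-bf} apply to $\scT(U_j)$.

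Assuming $\scT(U_j)$ satisfies cominimality, \Cref{cor:comin-local-bf}(a) applied to $\scT(U_j)$ gives that $\scT(U_j)/\loct(\bar\mfp)\simeq \scT/\loct(\mfp)$ satisfies cominimality. Since $\mfp\in\Spc$ was arbitrary (and the covering hypothesis ensures some such $j$ always exists), $\scT/\loct(\mfp)$ satisfies cominimality for every $\mfp\in \Spc$. Applying the reverse direction of \Cref{cor:comin-local-bf}(a) now to $\scT$ itself yields cominimality of $\scT$. The ``moreover'' claim is then immediate from \Cref{thm:costrat-coltg-comin}\eqref{thm:ccc-b}, since costratification is characterized as the conjunction of the colocal-to-global principle and cominimality.

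The only non-cosmetic point to verify is the equivalence $\scT/\loct(\mfp)\simeq \scT(U_j)/\loct(\bar\mfp)$ of big tt-categories, compatibly with the Balmer--Favi support--cosupport used to define cominimality; this is the expected main obstacle, but it is a transitivity-of-Verdier-quotients statement that is standard once $\scT_{V_j}\subseteq \loct(\mfp)$ has been established. Everything else follows by direct citation of the iff in \Cref{cor:comin-local-bf} and of \Cref{thm:costrat-coltg-comin}.
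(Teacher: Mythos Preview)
Your proposal is correct and follows essentially the same approach that the paper intends: the paper leaves the proof to the reader, noting that it is obtained ``essentially via the same arguments'' as \Cref{cor:comin-local-smash-covers}, and your argument is precisely that translation---pick $\mfp\in U_j$, use $\scT_{V_j}\subseteq \loct(\mfp)$ to factor the quotient, and apply \Cref{cor:comin-local-bf} in both directions. Your extra care in verifying visibility in $\scT(U_j)$ and flagging the transitivity-of-quotients equivalence is appropriate and does not deviate from the intended line.
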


In view of future applications involving singularity categories of schemes, we need a version of~\Cref{cor:comin-covers-compacts} for the more general case where $\scT$ acts on $\scK$. Let $\scS$ be a compactly generated localizing tensor-ideal of $\scT$ and set $\scM=\scS\ast \scK$. Then $\scM$ is a compactly generated localizing submodule of $\scK$; see~\cite[Section 4]{Stevenson13}. The action of $\scT$ on $\scK$ induces, as already discussed previously, an action of $\scT$ on $\scK/\scM$. Because of the way $\scM$ is defined, it follows that there is an induced action of $\scT/\scS$ on $\scK/\scM$ and a colocalizing subcategory of $\scK/\scM$ is a $\shom$ $\scT$-submodule if and only if it is a $\shom$ $\scT/\scS$-submodule.

Assuming that every point of $\Spc$ is visible, let $V$ be a Thomason subset of $\Spc$ and let $U=\Spc\setminus V$ and consider the localizing tensor-ideal $\scT_V$ generated by those compact objects of $\scT$ whose support is contained in $V$. By definition, $\scT_V$ is compactly generated and hence smashing, so there are associated left and right (respectively) idempotents $e_V$ and $f_V$ such that $\scT_V=\loct(e_V)=\Ker(f_V\ot -)=\im(e_V\ot -)$. We denote by $\scT(U)$ the category $\scT/\scT_V$. It holds that $\mathrm{Spc}(\scT(U)^\rmc)\cong U$ and we will treat this homeomorphism as an identification. Let $\scK_V=\scT_V\ast \scK$ and let $\scK(U)=\scK/\scK_V$. By the previous paragraph, $\scK_V$ is a compactly generated localizing submodule of $\scK$ and there is an induced action of $\scT(U)$ on $\scK(U)$ such that a colocalizing subcategory of $\scK(U)$ is a $\shom$ $\scT$-submodule if and only if it is a $\shom$ $\scT(U)$-submodule. Further, $\scK_V=\im(e_V\ast -)=\Ker(f_V\ast -)$ and $\scK_V^\perp=\im[e_V,-]_\ast=\Ker[f_V,-]_\ast$. By this last observation, it follows that $S_{\scK_V}\coloneqq \set{\mfp \in \Spc}{[g_\mfp,I_\scK]_\ast\in \scK_V^\perp}=U$.

The following result is the analogue of~\cite[Theorem 8.11]{Stevenson13} for colocalizing $\shom$-submodules.
\begin{thm}\label{thm:costrat-actions-covers}%
Suppose that every point of $\Spc$ is visible and that $\Spc=\bigcup_{j\in J}U_j$ is a cover of $\Spc$ by complements of Thomason subsets. Let $V_j$ be the complement of $U_j$. If $\scK(U_j)$ (as a $\scT(U_j)$-module) satisfies cominimality, for all $j\in J$, then $\scK$ satisfies cominimality. If, moreover, $\scK$ satisfies the colocal-to-global principle, then $\scK$ is costratified.
\end{thm}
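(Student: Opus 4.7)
The plan is to mirror the strategy of~\Cref{cor:comin-local-smash-covers} and~\Cref{cor:comin-covers-compacts} in the module setting, by manufacturing a family of smashing submodules of $\scK$ from the cover that meets the hypothesis of~\Cref{thm:comin-local}, and then invoking~\Cref{thm:costrat-coltg-comin}. Note that~\Cref{thm:comin-local} is already formulated for an arbitrary good support--cosupport pair on $\scT$ valued in an arbitrary space $S$, so it applies verbatim to the Balmer--Favi pair $(\Supp,\Cosupp)$ with $S=\Spc$ (which is a good pair under the visibility hypothesis).

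Concretely, for each $\mfp\in \Spc$, I use the cover hypothesis to pick some $j(\mfp)\in J$ with $\mfp\in U_{j(\mfp)}$, and set $\scM_\mfp = \scK_{V_{j(\mfp)}}$. The paragraph immediately preceding the theorem shows that $\scM_\mfp$ is a compactly generated smashing submodule of $\scK$ and that $S_{\scM_\mfp}=U_{j(\mfp)}$; in particular $\mfp\in S_{\scM_\mfp}$. So the family $\{\scM_\mfp\}_{\mfp\in \Spc}$ meets the hypothesis of~\Cref{thm:comin-local}. Moreover $\scK/\scM_\mfp = \scK(U_{j(\mfp)})$, and the discussion preceding the theorem shows that colocalizing $\shom$ $\scT$-submodules of $\scK(U_j)$ coincide with colocalizing $\shom$ $\scT(U_j)$-submodules, so the hypothesis that $\scK(U_j)$ is cominimal as a $\scT(U_j)$-module translates into cominimality of $\scK/\scM_\mfp$ as a $\scT$-module. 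Applying~\Cref{thm:comin-local}(a) then yields that $\scK$ satisfies cominimality.

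For the ``moreover'' clause, I combine the cominimality just established with the assumed colocal-to-global principle on $\scK$ and invoke~\Cref{thm:costrat-coltg-comin}\eqref{thm:ccc-b} to conclude that $\scK$ is costratified. The main obstacle I anticipate is not logical but bookkeeping: carefully verifying that the Balmer--Favi idempotents $g_\mfp$ and the relative internal-hom $[-,-]_\ast$ behave compatibly under the quotient $\scT \to \scT(U_j)$ and the induced quotient $\scK \to \scK(U_j)$, so that cominimality of $\scK(U_j)$ over $\scT(U_j)$ (computed using the $g_\mfp$ and the internal-hom of $\scT(U_j)$) really matches cominimality of $\scK/\scM_\mfp$ over $\scT$ (computed using the original $g_\mfp$ and the induced relative internal-hom of $\scK/\scM_\mfp$). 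This compatibility is essentially already encoded in~\Cref{prop:comin-local} together with the description of the induced $\scT$-action on a smashing quotient given at the start of~\Cref{sec:smash}, so no genuinely new input should be required beyond tracking these identifications.
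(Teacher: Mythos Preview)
Your proposal is correct and follows essentially the same approach as the paper: choose for each $\mfp\in\Spc$ an index $j(\mfp)$ with $\mfp\in U_{j(\mfp)}$, form the family $\{\scK_{V_{j(\mfp)}}\}_{\mfp\in\Spc}$ of smashing submodules, observe $\mfp\in S_{\scK_{V_{j(\mfp)}}}=U_{j(\mfp)}$, and invoke \Cref{thm:comin-local}. The paper's proof is terser (it wraps the cominimality and costratification conclusions into a single appeal to \Cref{thm:comin-local}), while you separately spell out the identification of $\scT$-module versus $\scT(U_j)$-module cominimality and the final use of \Cref{thm:costrat-coltg-comin}\eqref{thm:ccc-b}; this extra care is reasonable but not a different argument.
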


\begin{proof}
If $\mfp \in \Spc$, then there exists $j_\mfp\in J$ such that $\mfp\in U_{j_\mfp}$. Fix such a $j_\mfp\in J$, for each $\mfp\in \Spc$. Then we have a collection $\{\scK_{V_{j_\mfp}}\}_{\mfp \in \Spc}$ of smashing submodules of $\scK$ such that $\mfp \in S_{\scK_{V_{j_\mfp}}}$ since the latter is equal to $U_{j_\mfp}$. The result now follows by an immediate application of~\Cref{thm:comin-local}.
\end{proof}

\section{Derived categories of noetherian rings and schemes}\label{sec:derived}
Throughout, $R$ will denote a commutative noetherian ring. In the article~\cite{Neeman11}, Neeman proved that there is a bijective correspondence between colocalizing subcategories of $\rmD(R)$ and subsets of $\Spec(R)$. In this section, we give a more streamlined proof of Neeman's theorem by using the general machinery we developed; specifically~\Cref{thm:costrat-coltg-comin} and~\Cref{cor:ltg-implies-coltg}. As a direct consequence, we obtain a complete description of the $\RHom$-prime colocalizing subcategories of $\rmD(R)$ in terms of the residue fields. Further, using~\Cref{cor:comin-covers-compacts}, we prove that the derived category of quasi-coherent sheaves over a noetherian separated scheme is costratified.

\begin{rem}\label{rem:dx-tc3}
Let $X$ be a quasi-compact separated scheme. By~\cite[Proposition C.13]{Murfet07}, $\rmD(X)$ the derived category of quasi-coherent sheaves over $X$ satisfies~\Cref{hyp:tc3} (with $\rmD(R)$ being the special case $X=\Spec(R)$). In particular, this allows us to apply~\Cref{cor:ltg-implies-coltg} later.
\end{rem}

\subsection{Noetherian rings}
We will use the cosupport taking values in $\Spec(R)$ defined by the residue fields $k(\mfp)$. More specifically, if $X\in \rmD(R)$, then $\Cosupph(X)=\set{\mfp\in \Spec(R)}{\!\RHom_R(k(\mfp),X)\neq 0}$. We use the notation $\Cosupph$ to avoid conflict with the Balmer--Favi cosupport. Note that since $\rmD(R)$ is generated by its tensor-unit, every colocalizing subcategory of $\rmD(R)$ is a left $\RHom$-ideal. We denote by $I_R$ the cogenerator of $\rmD(R)$ that is the product of the Brown-Comenetz duals of the compact objects.

\begin{lem}\label{lem:residue-summand}
Let $\mfp\in \Spec(R)$. Then $\RHom_R(k(\mfp),X)\cong \bigoplus_{i\in \bbZ}\gS^i k(\mfp)^{(J_i)}\cong \prod_{i\in \bbZ}\gS^i k(\mfp)^{(J_i)}$, for some sets $J_i$, for all $X\in \rmD(R)$. The same holds for the complex $\RHom_R(X,k(\mfp))$.
\end{lem}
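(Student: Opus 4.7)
The plan is to lift the computation to $\rmD(k(\mfp))$, where the fact that $k(\mfp)$ is a field makes the decomposition formal, and then transport back to $\rmD(R)$ along restriction of scalars. Let $\phi\colon R\to k(\mfp)$ denote the composite $R\to R_\mfp\to R_\mfp/\mfp R_\mfp$. Restriction of scalars $\phi^{\ast}\colon \rmD(k(\mfp))\to \rmD(R)$ is exact and admits left adjoint $-\otimes_R^L k(\mfp)$ and right adjoint $\RHom_R(k(\mfp),-)$; in particular, $\phi^{\ast}$ preserves arbitrary coproducts, products and suspensions. By these two adjunctions, $\RHom_R(k(\mfp),X)$ and $\RHom_R(X,k(\mfp))\cong \phi^{\ast}\RHom_{k(\mfp)}(X\otimes_R^L k(\mfp),k(\mfp))$ are, viewed in $\rmD(R)$, of the form $\phi^{\ast}Y$ for suitable $Y\in \rmD(k(\mfp))$.

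Next I would invoke the standard fact that every object of $\rmD(k(\mfp))$ is formal: for any complex $Y$ of $k(\mfp)$-vector spaces, the short exact sequences $0\to Z^n\to Y^n\to B^{n+1}\to 0$ and $0\to B^n\to Z^n\to H^n(Y)\to 0$ split degreewise (because $k(\mfp)$ is a field), giving an isomorphism $Y\cong \bigoplus_{i\in \bbZ}\gS^i H^i(Y)$ in $\rmD(k(\mfp))$. Each cohomology $H^i(Y)$ is a $k(\mfp)$-vector space, hence isomorphic to $k(\mfp)^{(J_i)}$ for some set $J_i$. Since the summands $\gS^i k(\mfp)^{(J_i)}$ live in pairwise distinct cohomological degrees, the canonical comparison
\[
\bigoplus_{i\in \bbZ}\gS^i k(\mfp)^{(J_i)} \longrightarrow \prod_{i\in \bbZ}\gS^i k(\mfp)^{(J_i)}
\]
induces an isomorphism on every cohomology group (only one summand contributes in any given degree) and is therefore an isomorphism in $\rmD(k(\mfp))$.

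Finally, applying $\phi^{\ast}$ to this chain of isomorphisms transports the decomposition into $\rmD(R)$: $\phi^{\ast}$ commutes with $\bigoplus$, $\prod$ and $\gS$, and carries $k(\mfp)\in \rmD(k(\mfp))$ to $k(\mfp)$ regarded as an $R$-module. The argument for $\RHom_R(X,k(\mfp))$ is identical, using instead the isomorphism $\RHom_R(X,k(\mfp))\cong \phi^{\ast}\RHom_{k(\mfp)}(X\otimes_R^L k(\mfp),k(\mfp))$ coming from the $(-\otimes_R^L k(\mfp))\dashv \phi^{\ast}$ adjunction. The only potentially non-formal point is the splitting step in $\rmD(k(\mfp))$; but this is classical for complexes over a field, so I expect no genuine obstacle.
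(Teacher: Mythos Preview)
Your argument is correct and follows essentially the same route as the paper: both observe that $\RHom_R(k(\mfp),X)$ is naturally a complex of $k(\mfp)$-vector spaces and then invoke formality over a field. The paper phrases this concretely by taking a K-injective resolution $E$ of $X$ and noting that $\Hom_R(k(\mfp),E)$ consists of $k(\mfp)$-vector spaces; you package the same observation via the adjunctions for restriction of scalars along $\phi$, and you spell out the coproduct-equals-product step a bit more explicitly than the paper does.
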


\begin{proof}
Let $E$ be a K-injective resolution of $X$. Then $\RHom_R(k(\mfp),X)$ is the $\Hom$-complex $\Hom_R(k(\mfp),E)$. This is a complex of $k(\mfp)$-vector spaces, therefore it must be quasi-isomorphic to its cohomology complex with zero differential (which also has $k(\mfp)$-vector spaces as terms; thus coproducts of copies of $k(\mfp)$). For $\RHom_R(X,k(\mfp))$, pick a K-projective resolution of $X$ instead of a K-injective resolution and argue in an identical manner.
\end{proof}

\begin{lem}\label{lem:key-cominimality}
Let $X$ be an object of $\rmD(R)$ such that $\RHom_R(k(\mfp),X)\neq 0$. Then $\coloc(k(\mfp))=\coloc(\RHom_R(k(\mfp),X))\subseteq \coloc(X)$.
\end{lem}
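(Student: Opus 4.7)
The plan is to prove the equality $\coloc(k(\mfp)) = \coloc(\RHom_R(k(\mfp),X))$ via two summand-based inclusions, and to obtain the final containment $\subseteq \coloc(X)$ by applying $\RHom_R(-,X)$ to a localizing subcategory argument.

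First, I would invoke~\Cref{lem:residue-summand} to write $\RHom_R(k(\mfp),X) \cong \bigoplus_i \gS^i k(\mfp)^{(J_i)} \cong \prod_i \gS^i k(\mfp)^{(J_i)}$ and pick some $J_{i_0} \neq \varnothing$, which exists by the non-vanishing hypothesis. Since the summands lie in pairwise distinct degrees, each $\gS^i k(\mfp)^{(J_i)}$ is genuinely a direct summand of $\RHom_R(k(\mfp),X)$ in $\rmD(R)$: at the level of complexes with zero differential the canonical coproduct inclusion and product projection satisfy $\pi_{i_0}\iota_{i_0} = \id$ by inspection degree by degree. Closure of $\coloc(\RHom_R(k(\mfp),X))$ under suspensions and retracts then places $k(\mfp)^{(J_{i_0})}$ inside, and since $J_{i_0}$ is non-empty, $k(\mfp)$ is a summand of $k(\mfp)^{(J_{i_0})}$. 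This gives $\coloc(k(\mfp)) \subseteq \coloc(\RHom_R(k(\mfp),X))$.

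For the reverse inclusion, $k(\mfp)^{J_i} \in \coloc(k(\mfp))$ by closure under products. The standard inclusion $k(\mfp)^{(J_i)} \hookrightarrow k(\mfp)^{J_i}$ is a $k(\mfp)$-subspace inclusion, hence split over the field $k(\mfp)$; because the $R$-action on any $k(\mfp)$-vector space factors through $R \to k(\mfp)$, this splitting is automatically $R$-linear and thus yields a direct summand in $\rmD(R)$. Therefore each $k(\mfp)^{(J_i)} \in \coloc(k(\mfp))$, and taking shifts and products gives $\RHom_R(k(\mfp),X) \in \coloc(k(\mfp))$, completing the equality.

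For the final containment it suffices to show $\RHom_R(k(\mfp),X) \in \coloc(X)$. I would consider the full subcategory $\scL = \{L \in \rmD(R) : \RHom_R(L,X) \in \coloc(X)\}$. Because $\RHom_R(-,X) \colon \rmD(R)^\rmop \to \rmD(R)$ converts coproducts into products and is triangulated, and because $\coloc(X)$ is closed under products, shifts, and cones, $\scL$ is a localizing subcategory of $\rmD(R)$. It contains $R$ via $\RHom_R(R,X) \cong X$, so $\scL \supseteq \loc(R) = \rmD(R)$ and in particular $k(\mfp) \in \scL$. The only genuinely delicate point is the biproduct-summand assertion in the first paragraph, which rests on the coproduct-equals-product identification from~\Cref{lem:residue-summand}; all remaining manipulations are formal.
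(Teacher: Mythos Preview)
Your proof is correct and follows essentially the same route as the paper: both use \Cref{lem:residue-summand} together with the splitting of $k(\mfp)^{(J_i)} \hookrightarrow k(\mfp)^{J_i}$ over the field $k(\mfp)$ to establish the equality, and both deduce the final containment from the fact that $\rmD(R)=\loc(R)$. The only cosmetic difference is that the paper packages the last step as ``every colocalizing subcategory of $\rmD(R)$ is a left $\RHom$-ideal,'' whereas you unpack this into the localizing-subcategory argument for $\scL$; these are the same observation.
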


\begin{proof}
It holds that $\RHom_R(k(\mfp),X)\cong \prod_{i \in \bbZ}\gS^i k(\mfp)^{(J_i)}$. Since $k(\mfp)^{(J_i)} \hookrightarrow k(\mfp)^{J_i}$ is a map of $k(\mfp)$-vector spaces, it must split. So, $k(\mfp)^{(J_i)}$ is a summand of $k(\mfp)^{J_i}$. This implies that $k(\mfp)^{(J_i)}\in \coloc(k(\mfp))$ and consequently, $\prod_{i \in \bbZ}\gS^i k(\mfp)^{(J_i)}\in \coloc(k(\mfp))$. Thus, $\coloc(\RHom_R(k(\mfp),X))\subseteq \coloc(k(\mfp))$. By the fact that $k(\mfp)$ is a summand of $\RHom_R(k(\mfp),X)$, it follows that $\coloc(k(\mfp))\subseteq \coloc(\RHom_R(k(\mfp),X))$. Since $\rmD(R)$ is generated by its tensor-unit, every colocalizing subcategory of $\rmD(R)$ is a left $\RHom$-ideal. Hence, $\RHom_R(k(\mfp),X)\in \coloc(X)$. This completes the proof.
\end{proof}

\begin{prop}\label{prop:cosuppkp}
The category $\rmD(R)$ satisfies the colocal-to-global principle (in particular, $\Cosupph$ detects vanishing) and, for each $\mfp\in \Spec(R)$, it holds that $\Cosupph(k(\mfp))=\{\mfp\}$.
\end{prop}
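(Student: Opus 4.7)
The strategy is to deduce the colocal-to-global principle from~\Cref{cor:ltg-implies-coltg} by first establishing the local-to-global principle with $\gG_\mfp = k(\mfp)$, and then to compute $\Cosupph(k(\mfp))=\{\mfp\}$ directly from the orthogonality of distinct residue fields.

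First I would verify that $\rmD(R) = \loct\parens{k(\mfp)}{\mfp\in\Spec(R)}$. Since $R$ is noetherian, every module is a transfinite extension of cyclic modules of the form $R/\mfp$, so the collection $\{R/\mfp\}_{\mfp\in\Spec(R)}$ generates $\rmD(R)$ as a localizing subcategory; a further dévissage (by noetherian induction, using the local cohomology triangles attached to primes strictly containing $\mfp$) puts each $R/\mfp$ in the localizing subcategory generated by the residue fields. Granted this,~\Cref{rem:gen-t-gen-k} applied to $\scG = \{k(\mfp)\}_{\mfp\in\Spec(R)}$ yields $\loct(X) = \loct\parens{k(\mfp)\ot X}{\mfp\in\Spec(R)}$ for every $X \in \rmD(R)$, which is precisely the local-to-global principle for the data $\gG_\mfp = k(\mfp)$. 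Because $\rmD(R)$ satisfies~\Cref{hyp:tc3} (by~\Cref{rem:dx-tc3}), \Cref{cor:ltg-implies-coltg} then upgrades this to the colocal-to-global principle, and~\Cref{rem:coloc-optimal} shows that $\Cosupph$ detects vanishing.

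It remains to compute $\Cosupph(k(\mfp))$. Clearly $\mfp \in \Cosupph(k(\mfp))$, since the identity morphism exhibits $\RHom_R(k(\mfp),k(\mfp)) \neq 0$. For $\mfq \neq \mfp$, I would pick $r$ in the symmetric difference $(\mfp \setminus \mfq) \cup (\mfq \setminus \mfp)$; then $r$ acts as zero on exactly one of $k(\mfp),k(\mfq)$ and invertibly on the other. The two $R$-linear actions of $r$ on $\RHom_R(k(\mfq),k(\mfp))$—the one induced via the source and the one induced via the target—necessarily coincide, so this complex admits an endomorphism that is simultaneously zero and invertible, forcing $\RHom_R(k(\mfq),k(\mfp)) = 0$.

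The main technical ingredient, beyond formal application of the machinery already developed, is the classical fact that the residue fields generate $\rmD(R)$ as a localizing subcategory; the remainder of the argument is a direct verification using the results of~\Cref{sec:costrat}.
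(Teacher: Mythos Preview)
Your proof is correct and follows the same overall strategy as the paper: deduce the colocal-to-global principle from the local-to-global principle via \Cref{cor:ltg-implies-coltg}, then verify that distinct residue fields are $\RHom$-orthogonal. The only differences are in implementation: the paper cites~\cite{Neeman92} for the local-to-global principle (equivalently, for generation by residue fields) where you sketch the d\'evissage yourself, and the paper establishes $\RHom_R(k(\mfq),k(\mfp))=0$ via \Cref{lem:residue-summand} (the cohomology is simultaneously a $k(\mfp)$- and a $k(\mfq)$-vector space, hence zero) where your element-in-the-symmetric-difference argument reaches the same conclusion more directly without invoking that lemma.
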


\begin{proof}
Since $\rmD(R)$ satisfies the local-to-global principle~\cite{Neeman92}, by~\Cref{cor:ltg-implies-coltg}, $\rmD(R)$ satisfies the colocal-to-global principle and, by~\Cref{rem:coloc-optimal}, $\Cosupph$ detects vanishing. Hence, $\Cosupph(k(\mfp))\neq \varnothing$. Let $\mfq\in \Spec(R)$ such that $\mfp\neq \mfq$. By~\Cref{lem:residue-summand}, $\RHom_R(k(\mfp),k(\mfq))$ is quasi-isomorphic to a complex whose terms are of the form $k(\mfp)^{(I)}\cong k(\mfq)^{(J)}$ and these are both $k(\mfp)$ and $k(\mfq)$-vector spaces. Since $\mfp\neq \mfq$, this can only happen if the indexing sets $I$ and $J$ are empty. Hence, $\RHom_R(k(\mfp),k(\mfq))=0$. Consequently, $\Cosupph(k(\mfp))=\{\mfp\}$.
\end{proof}

\begin{thm}[{\cite{Neeman11}}]\label{thm:affine-costrat}
Let $R$ be a commutative noetherian ring. Then $\rmD(R)$ is costratified.
\end{thm}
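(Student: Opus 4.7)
The plan is to apply \Cref{thm:costrat-coltg-comin}(b), in the mildly generalized form of \Cref{rem:replacing-good}, to the support--cosupport pair on $\rmD(R)$ given by the family $\Gamma_\mfp = k(\mfp)$, $\mfp \in \Spec(R)$. Note that in $\rmD(R)$, every colocalizing subcategory is an $\RHom$-ideal (because $\rmD(R)$ is generated by its tensor-unit), so $\Coloch(\rmD(R))$ is the collection of all colocalizing subcategories. Proposition \Cref{prop:cosuppkp} already delivers the colocal-to-global principle (and $\Cosupph(k(\mfp)) = \{\mfp\}$), so what remains is to verify the property demanded by \Cref{rem:replacing-good} and then to establish cominimality.

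For the property in \Cref{rem:replacing-good}, I would show that $\Cosupph(\RHom_R(k(\mfp), X)) \subseteq \{\mfp\}$ for every $X \in \rmD(R)$; combined with non-vanishing this forces equality. By \Cref{lem:residue-summand}, $\RHom_R(k(\mfp), X) \cong \prod_i \gS^i k(\mfp)^{(J_i)}$, so since cosupport sends products to unions it suffices to bound the cosupport of each $k(\mfp)^{(J_i)}$. As a $k(\mfp)$-vector subspace, $k(\mfp)^{(J_i)}$ splits off $k(\mfp)^{J_i}$, and the splitting is automatically $R$-linear since $R$ acts through $R \to k(\mfp)$; hence it suffices to check the bound for the product $k(\mfp)^{J_i}$. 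But $\RHom_R(k(\mfq), k(\mfp)^{J_i}) \cong \RHom_R(k(\mfq), k(\mfp))^{J_i}$, which vanishes for $\mfq \neq \mfp$ by the computation already carried out in the proof of \Cref{prop:cosuppkp}.

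For cominimality, first observe that $I_R$ is a cogenerator, so $\RHom_R(k(\mfp), I_R) \neq 0$ for all $\mfp$ and $\Cosupph(I_R) = \Spec(R)$. \Cref{lem:key-cominimality} applied to $X = I_R$ gives $\coloc(\RHom_R(k(\mfp), I_R)) = \coloc(k(\mfp))$, so it suffices to show that $\coloc(k(\mfp))$ is a minimal non-zero colocalizing subcategory. Let $0 \neq Y \in \coloc(k(\mfp))$. By \Cref{lem:cosupport-of-collection}, $\Cosupph(Y) \subseteq \Cosupph(k(\mfp)) = \{\mfp\}$, and since $\Cosupph$ detects vanishing (\Cref{prop:cosuppkp}), $\Cosupph(Y) = \{\mfp\}$. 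In particular $\RHom_R(k(\mfp), Y) \neq 0$, so a second application of \Cref{lem:key-cominimality}, now to $Y$ in place of $X$, gives $\coloc(k(\mfp)) = \coloc(\RHom_R(k(\mfp), Y)) \subseteq \coloc(Y)$. Together with $\coloc(Y) \subseteq \coloc(k(\mfp))$, this yields $\coloc(Y) = \coloc(k(\mfp))$.

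With colocal-to-global, cominimality, and the key vanishing property in place, \Cref{thm:costrat-coltg-comin}(b) (via \Cref{rem:replacing-good}) immediately produces the bijection between $\scP(\Spec(R))$ and $\Coloch(\rmD(R)) = \Coloc(\rmD(R))$, that is, $\rmD(R)$ is costratified. The real technical substance has been pushed into the preparatory lemmas: \Cref{lem:residue-summand} (the decomposition of $\RHom_R(k(\mfp), -)$ into shifts of $k(\mfp)$-vector spaces), \Cref{lem:key-cominimality} (the absorption $\coloc(k(\mfp)) = \coloc(\RHom_R(k(\mfp), X))$), and \Cref{prop:cosuppkp} (the colocal-to-global principle together with $\Cosupph(k(\mfp)) = \{\mfp\}$). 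What remains here is the bookkeeping that threads these facts through the general framework of \Cref{sec:costrat}; the only mildly subtle point is the summand argument used to bound the cosupport of the coproduct $k(\mfp)^{(J_i)}$ by that of the corresponding product.
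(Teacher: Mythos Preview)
Your proof is correct and follows essentially the same route as the paper: verify the colocal-to-global principle via \Cref{prop:cosuppkp}, establish cominimality of $\coloc(k(\mfp))$ by combining detection of vanishing with \Cref{lem:key-cominimality}, identify $\coloc(\RHom_R(k(\mfp),I_R))=\coloc(k(\mfp))$, and then invoke \Cref{thm:costrat-coltg-comin}\eqref{thm:ccc-b} through \Cref{rem:replacing-good}. The one difference is your second paragraph, where you separately verify $\Cosupph(\RHom_R(k(\mfp),X))\subseteq\{\mfp\}$ via the summand argument; this is correct but redundant, since once you know $\coloc(\RHom_R(k(\mfp),X))=\coloc(k(\mfp))$ from \Cref{lem:key-cominimality}, \Cref{lem:cosupport-of-collection} together with $\Cosupph(k(\mfp))=\{\mfp\}$ already gives the desired equality of cosupports (and only the case $X=I_R$ is needed for \Cref{lem:tau-inj-sigma-surj}).
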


\begin{proof}
Let $\mfp\in \Spec(R)$ and let $X$ be a non-zero object in $\coloc(k(\mfp))$. Then $\coloc(X)\subseteq \coloc(k(\mfp))$. By~\Cref{prop:cosuppkp}, $\rmD(R)$ satisfies the colocal-to-global principle, $\Cosupph$ detects vanishing and $\Cosupph(k(\mfp))=\{\mfp\}$. By~\Cref{lem:cosupport-of-collection}, it follows that $\Cosupph(X)=\{\mfp\}$, i.e., $\RHom_R(k(\mfp),X)\neq 0$. As a result,~\Cref{lem:key-cominimality} implies that $\coloc(X)= \coloc(k(\mfp))$. So, $\coloc(k(\mfp))$ is a minimal colocalizing subcategory. Moreover,~\Cref{lem:key-cominimality} implies that $\coloc(k(\mfp))=\coloc(\RHom_R(k(\mfp),I_R))$ and so, $\coloc(\RHom_R(k(\mfp),I_R))$ is minimal. In conclusion, $\rmD(R)$ satisfies both the colocal-to-global principle and cominimality; so, \Cref{thm:costrat-coltg-comin} implies that $\rmD(R)$ is costratified; see also~\Cref{rem:replacing-good}. 
\end{proof}

\begin{thm}\label{thm:affine-hom-primes}
The $\RHom$-prime colocalizing subcategories of $\rmD(R)$ correspond to points of $\Spec(R)$. The correspondence is given by associating $\mfp\in \Spec(R)$ with $\Ker\RHom_R(k(\mfp),-)=\coloc\parens{k(\mfq)}{\mfq\neq \mfp}$.
\end{thm}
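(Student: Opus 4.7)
The plan is to deduce this directly from the abstract classification in Theorem~\ref{thm:costrat-hom-primes}, using that $\rmD(R)$ is already known to be costratified (Theorem~\ref{thm:affine-costrat}), and then translating the resulting description into the form stated. The only preparatory issue is that the support--cosupport pair defined by the residue fields is not ``good'' in the strict sense of Definition~\ref{defn:good-supp} (since $k(\mfp)\otimes^{\rmL}_R k(\mfp)$ is not in general $k(\mfp)$), so I will invoke Remark~\ref{rem:replacing-good} to apply Theorem~\ref{thm:costrat-hom-primes} anyway.

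First I would verify the hypothesis of Remark~\ref{rem:replacing-good}: for every $\mfp\in\Spec(R)$ and every $X\in\rmD(R)$ with $\RHom_R(k(\mfp),X)\neq 0$, one has $\Cosupph(\RHom_R(k(\mfp),X))=\{\mfp\}$. By Lemma~\ref{lem:residue-summand}, $\RHom_R(k(\mfp),X)$ is a (co)product of shifts of copies of $k(\mfp)$, hence lies in $\coloc(k(\mfp))$. By Lemma~\ref{lem:cosupport-of-collection}, its cosupport is therefore contained in $\Cosupph(k(\mfp))=\{\mfp\}$ (Proposition~\ref{prop:cosuppkp}), and equality holds because $\Cosupph$ detects vanishing (again Proposition~\ref{prop:cosuppkp}).

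Second, I would check that $\Cosupph(I_R)=\Spec(R)$, so that the indexing set in Theorem~\ref{thm:costrat-hom-primes} is all of $\Spec(R)$. Since $I_R$ is a cogenerator of $\rmD(R)$, if $\RHom_R(k(\mfp),I_R)$ were zero then $\Hom_{\rmD(R)}(k(\mfp),\gS^n I_R)=0$ for all $n$, forcing $k(\mfp)=0$, which is absurd. Hence $\mfp\in\Cosupph(I_R)$ for every prime $\mfp$.

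Now Theorem~\ref{thm:costrat-hom-primes} applies and yields a bijection between $\RHom$-prime colocalizing subcategories of $\rmD(R)$ and points of $\Spec(R)$, sending $\mfp$ to $\Ker\RHom_R(k(\mfp),-)=\coloc\parens{\RHom_R(k(\mfq),I_R)}{\mfq\neq \mfp}$. To match the statement, I would apply Lemma~\ref{lem:key-cominimality} with $X=I_R$ (legitimate since $\RHom_R(k(\mfq),I_R)\neq 0$ by the previous step) to obtain $\coloc(\RHom_R(k(\mfq),I_R))=\coloc(k(\mfq))$ for every $\mfq$. Taking the join over $\mfq\neq \mfp$ gives the desired equality $\Ker\RHom_R(k(\mfp),-)=\coloc\parens{k(\mfq)}{\mfq\neq \mfp}$. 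There is no genuine obstacle here; the entire argument is a routine unpacking of the abstract machinery, and the only care needed is in justifying the use of Remark~\ref{rem:replacing-good}.
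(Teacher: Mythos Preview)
Your proposal is correct and follows essentially the same route as the paper: apply \Cref{thm:costrat-hom-primes} to the costratified category $\rmD(R)$, then use \Cref{lem:key-cominimality} to rewrite $\coloch(\RHom_R(k(\mfq),I_R))$ as $\coloch(k(\mfq))$. The paper's version is terser; it handles your second step (that $\Cosupph(I_R)=\Spec(R)$) by simply noting that $\rmD(R)$ is a conservative $\rmD(R)$-module, which forces $\cga(I_\scK)=S$, and it leaves the ``goodness'' issue implicit in the same way \Cref{thm:affine-costrat} does via \Cref{rem:replacing-good}. One small caveat: \Cref{rem:replacing-good} is literally stated only for \Cref{thm:costrat-coltg-comin}, so strictly speaking you would also want to observe that the lemmas feeding into \Cref{thm:costrat-hom-primes} (notably \Cref{lem:obj-contained} and \Cref{lem:coltg-kernels}) go through for the residue-field data --- they do, since $\RHom_R(k(\mfp),k(\mfq))=0$ for $\mfp\neq\mfq$ and $k(\mfp)\otimes^{\rmL}_R k(\mfp)\neq 0$ --- but the paper does not spell this out either.
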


\begin{proof}
Since $\rmD(R)$ is costratified, and clearly a conservative $\rmD(R)$-module,~\Cref{thm:costrat-hom-primes} implies that the $\RHom$-prime colocalizing subcategories of $\rmD(R)$ are precisely of the form $\Ker\RHom_R(k(\mfp),-)=\coloc\parens{\!\RHom_R(k(\mfq),I_R)}{\mfq\neq \mfp}$ and the claimed equality is due to~\Cref{lem:key-cominimality}.
\end{proof}

\begin{rem}
One could also choose to work with the Balmer--Favi support (or the smashing support since $\rmD(R)$ satisfies the Telescope Conjecture~\cite{Neeman92}; see also~\cite[Lemma 7.2]{Verasdanis23} and~\cite[Section 6]{BalchinStevenson23}). There is a homeomorphism between $\mathrm{Spc}(\rmD^\mathrm{perf}(R))$ and $\Spec(R)$~\cite{Neeman92}. Using this homeomorphism, we can express the Balmer--Favi support--cosupport via $\Spec(R)$. For each $\mfp\in \Spec(R)$, the Balmer--Favi idempotent associated with $\mfp$ is $g_\mfp=K_\infty(\mfp)\ot R_\mfp$, where $K_\infty(\mfp)$ is the stable Koszul complex and $R_\mfp$ is the localization of $R$ at $\mfp$. The objects $g_\mfp$ are orthogonal tensor-idempotents, so they define a support--cosupport pair: Let $X\in \rmD(R)$. Then $\Supp(X)=\set{\mfp\in \Spec(R)}{g_\mfp \ot X\neq 0}$ and $\Cosupp(X)=\set{\mfp\in \Spec(R)}{\RHom_R(g_\mfp,X)\neq 0}$. It holds that $\loc(g_\mfp)=\loc(k(\mfp))$~\cite[Lemma 3.22]{Stevenson18b}. Therefore, $\coloc(\RHom_R(g_\mfp,X))=\coloc(\RHom_R(k(\mfp),X))=\coloc(k(\mfp))$, with the last equality by~\Cref{lem:key-cominimality} (provided that $\RHom_R(k(\mfp),X)\neq 0$). Since $\rmD(R)$ is stratified by the Balmer--Favi support~\cite[Theorem 5.8]{BarthelHeardSanders23}, in particular it satisfies the local-to-global principle, $\rmD(R)$ must also satisfy the colocal-to-global principle; see~\Cref{cor:ltg-implies-coltg}. The equality $\coloc(\RHom_R(g_\mfp,X))=\coloc(k(\mfp))$ shows that $\rmD(R)$ satisfies cominimality with respect to the Balmer--Favi support--cosupport. Therefore, by~\Cref{thm:costrat-coltg-comin}, $\rmD(R)$ is costratified with respect to the Balmer--Favi support--cosupport.
\end{rem}

\begin{ex}[{\cite{Stevenson14a}}]
We include an example of a category that is not costratified. Let $R$ be an absolutely flat ring that is not semi-artinian. Then there exists a superdecomposable injective $R$-module $E$. Let $\mfp\in \Spec(R)$. Then $\RHom_R(k(\mfp),E)=\Hom_R(k(\mfp),E)$. If there was a non-zero map $k(\mfp)\to E$, then (as $k(\mfp)$ is simple and injective since $R$ is absolutely flat) $k(\mfp)$ would have to be a summand of~$E$, which leads to a contradiction. This shows that $\RHom_R(k(\mfp),E)=0$, for all $\mfp \in \Spec(R)$, i.e., $\Cosupph(E)=\varnothing$; showcasing the failure of the cosupport to detect vanishing and consequently, the failure of the colocal-to-global principle. As a result, the local-to-global principle cannot hold either, since it implies the colocal-to-global principle.
\end{ex}

\subsection{Noetherian schemes}

Let $X$ be a noetherian separated scheme and denote by $\rmD(X)$ the derived category of quasi-coherent sheaves over $X$. Then $\rmD(X)$ is a big tt-category whose subcategory of compact objects is $\rmD^{\mathrm{perf}}(X)$ the subcategory of perfect complexes. The Balmer spectrum of $\rmD(X)$ is homeomorphic to the underlying space of $X$~\cite{Thomason97}. The notion of support we consider is the Balmer--Favi support. However, since $\rmD(X)$ satisfies the Telescope Conjecture~\cite{Stevenson13}, one can also choose either the usual homological support or the small smashing support, as they are all identified under the bijections $\mathrm{Spc}^\rms(\rmD(X))\cong \mathrm{Spc}(\rmD^\mathrm{perf}(X))\cong X$.

\begin{thm}
Let $X$ be a noetherian separated scheme. Then $\rmD(X)$ is costratified.
\end{thm}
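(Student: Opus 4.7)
The plan is to apply~\Cref{cor:comin-covers-compacts} to $\scT = \rmD(X)$. Via the identification $\mathrm{Spc}(\rmD^{\mathrm{perf}}(X)) \cong X$, noetherianity of $X$ guarantees that $\Spc$ is noetherian, so every point is visible and every closed subset is Thomason. I would begin by choosing a finite affine open cover $X = \bigcup_{j=1}^n U_j$ with $U_j \cong \Spec(R_j)$; its complements $V_j = X \setminus U_j$ are then Thomason, and the cover $\Spc = \bigcup_j U_j$ satisfies the hypotheses of~\Cref{cor:comin-covers-compacts}.

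Two facts remain to be checked: that each quotient $\rmD(X)(U_j) = \rmD(X)/\rmD(X)_{V_j}$ satisfies cominimality, and that $\rmD(X)$ satisfies the colocal-to-global principle. For the first, I would use the well-known identification $\rmD(X)(U_j) \simeq \rmD(R_j)$ of big tt-categories, valid because $X$ is separated: the restriction functor along the open immersion $U_j \hookrightarrow X$ is a Bousfield localization whose kernel is the compactly generated tensor-ideal of complexes with support in $V_j$, which is precisely $\rmD(X)_{V_j}$. Granting this,~\Cref{thm:affine-costrat} shows $\rmD(R_j)$ is costratified, and~\Cref{thm:costrat-coltg-comin} then supplies cominimality for $\rmD(X)(U_j)$.

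For the colocal-to-global principle, I would appeal to~\Cref{cor:ltg-implies-coltg}: by~\Cref{rem:dx-tc3}, $\rmD(X)$ satisfies~\Cref{hyp:tc3}, so it suffices to establish the local-to-global principle for $\rmD(X)$. This is known to hold for any rigidly-compactly generated tt-category with noetherian spectrum by~\cite{Stevenson13}, which applies here because $\Spc \cong X$ is noetherian. Both hypotheses of~\Cref{cor:comin-covers-compacts} being verified, the conclusion that $\rmD(X)$ is costratified follows immediately.

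The main obstacle is the geometric input---the equivalence $\rmD(X)(U_j) \simeq \rmD(R_j)$. The separatedness hypothesis on $X$ is precisely what ensures this works cleanly (through the affine diagonal making restriction along open immersions well-behaved), and it is the only non-formal ingredient beyond the general machinery of the preceding sections; everything else reduces to a direct application of results already in hand.
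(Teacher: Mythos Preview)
Your proposal is correct and follows essentially the same approach as the paper's proof: both establish the colocal-to-global principle via \Cref{cor:ltg-implies-coltg} (the paper deduces the local-to-global principle from stratification~\cite[Corollary 8.13]{Stevenson13}, you cite it more directly), and both verify cominimality by taking an affine open cover, identifying $\rmD(X)(U_j)\simeq \rmD(U_j)$, and invoking \Cref{thm:affine-costrat} together with \Cref{cor:comin-covers-compacts}. The only cosmetic differences are that the paper does not insist on a finite cover and phrases the Thomason condition via quasi-compactness of opens rather than noetherianity of $\Spc$.
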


\begin{proof}
By~\cite[Corollary 8.13]{Stevenson13}, $\rmD(X)$ is stratified. In particular, $\rmD(X)$ satisfies the local-to-global principle. Hence, by~\Cref{cor:ltg-implies-coltg}, $\rmD(X)$ satisfies the colocal-to-global principle. So, it suffices to prove cominimality. Let $\{U_i\}_{i \in I}$ be an open affine cover of $X$. As $X$ is noetherian, any open subset of $X$ is quasi-compact, so its complement is Thomason. The corresponding smashing localization $\rmD(X)(U_i)$ is equivalent to $\rmD(U_i)$. The latter is costratified (in particular it satisfies cominimality) by~\Cref{thm:affine-costrat}. The result follows by~\Cref{cor:comin-covers-compacts}.
\end{proof}

\end{document}